\def\jamesmode{0}
\def\arxivmode{0}
\def\fastmode{0}
\def\showauthornotes{0}
\def\showkeys{0}
\def\showdraftbox{1}
\def\showcolorlinks{1}
\def\usemicrotype{1}
\def\showfixme{0}
\newcommand{\children}{\Lambda}
\newcommand{\level}{\mathrm{lev}}
\newcommand{\levelt}{\mathrm{lev}^{*}}
\newcommand{\dloc}{\vvmathbb{d}_{\mathrm{loc}}}
\newcommand{\graphs}{\cG}
\newcommand{\rgraphs}{\cG_{\bullet}}
\newcommand{\rrgraphs}{\cG_{\bullet \bullet}}
\newcommand{\cgraphs}{\graphs^*}
\newcommand{\crgraphs}{\rgraphs^*}
\newcommand{\crrgraphs}{\rrgraphs^*}
\newcommand{\todl}{\Rightarrow}
\newcommand{\equivclass}[1]{[#1]}
\newcommand{\vvS}{\vvmathbb{S}}
\newcommand{\avgd}{\bar{d}}
\newcommand{\dimspec}{\mathrm{dim}_{\mathsf{sp}}}
\newcommand{\dimspecover}{\obar{\mathrm{dim}}_{\mathsf{sp}}}
\newcommand{\dimconf}{\mathrm{dim}_{\mathsf{cg}}}
\newcommand{\dimconfunder}{\ubar{\mathrm{dim}}_{\mathsf{cg}}}
\newcommand{\dimconfover}{\obar{\mathrm{dim}}_{\mathsf{cg}}}
\newtheorem{theorem}{Theorem}[section]
\newtheorem*{theorem*}{Theorem}
\newtheorem{proposition}[theorem]{Proposition}
\newtheorem*{proposition*}{Proposition}
\newtheorem{lemma}[theorem]{Lemma}
\newtheorem*{lemma*}{Lemma}
\newtheorem{corollary}[theorem]{Corollary}
\newtheorem*{conjecture*}{Conjecture}
\newtheorem{fact}[theorem]{Fact}
\newtheorem*{fact*}{Fact}
\newtheorem*{exercise*}{Exercise}
\newtheorem*{hypothesis*}{Hypothesis}
\theoremstyle{definition}
\newtheorem{example}[theorem]{Example}
\newtheorem{exercise-easy}[theorem]{Exercise}
\newtheorem{exercise-med}[theorem]{Exercise}
\newtheorem{exercise-hard}[theorem]{Exercise$^\star$}
\newtheorem{claim}[theorem]{Claim}
\newtheorem*{claim*}{Claim}
\newtheorem{remark}[theorem]{Remark}
\newtheorem*{remark*}{Remark}
\newtheorem*{observation*}{Observation}
\let\mathbb\varmathbb
\definecolor{bleudefrance}{rgb}{0.01, 0.1, 1.0}
\definecolor{azure}{rgb}{0.0, 0.5, 1.0}
\newcommand{\savehyperref}[2]{\texorpdfstring{\hyperref[#1]{#2}}{#2}}
\newcommand{\Sref}[1]{\hyperref[#1]{\S\ref*{#1}}}
\newcommand{\mynotes}[1]{{\sffamily\small\color{teal}{#1}}\medskip}
\newcommand{\Authornote}[2]{{\sffamily\small\color{blue}{[#1: #2]}}\medskip}
\newcommand{\Authornotecolored}[3]{{\sffamily\small\color{#1}{[#2: #3]}}}
\newcommand{\Authorcomment}[2]{{\sffamily\small\color{gray}{[#1: #2]}}}
\newcommand{\Authorstartcomment}[1]{\sffamily\small\color{gray}[#1: }
\newcommand{\Authorfnote}[2]{\footnote{\color{red}{#1: #2}}}
\newcommand{\Authorfixme}[1]{\Authornote{#1}{\textbf{??}}}
\newcommand{\Authormarginmark}[1]{\marginpar{\textcolor{red}{\fbox{\Large #1:!}}}}
\newcommand{\myexplain}[1]{{\sffamily\small\color{red}{\noindent [Explanation:\medskip\newline \begin{quote}#1\hfill]\end{quote}}}\medskip}
\newcommand{\mynotes}[1]{}
\newcommand{\Authornote}[2]{}
\newcommand{\Authornotecolored}[3]{}
\newcommand{\Authorcomment}[2]{}
\newcommand{\Authorstartcomment}[1]{}
\newcommand{\Authorfnote}[2]{}
\newcommand{\Authorfixme}[1]{}
\newcommand{\Authormarginmark}[1]{}
\newcommand{\myexplain}[1]{}
\renewcommand{\Authornote}[2]{{\sffamily\small\color{blue}{[#1: #2]}}\medskip}
\renewcommand{\Authornotecolored}[3]{{\sffamily\small\color{#1}{[#2: #3]}}}
\newcommand{\Esymb}{\mathbb{E}}
\newcommand{\Psymb}{\mathbb{P}}
\DeclareMathOperator*{\E}{\Esymb}
\DeclareMathOperator*{\ProbOp}{\Psymb}
\renewcommand{\Pr}{\ProbOp}
\newcommand{\textparen}[1]{\text{(#1)}}
\newcommand{\because}[1]{\textparen{because #1}}
\renewcommand{\because}[1]{\textparen{because #1}}
\newcommand{\seteq}{\mathrel{\mathop:}=}
\newcommand{\bigmid}{~\big|~}
\newcommand\bdot\bullet
\newcommand{\Ind}{\mathbb I}
\newcommand{\Ind}{\mathds 1}
\DeclareMathOperator{\vol}{vol}
\newcommand{\Z}{\mathbb Z}
\newcommand{\N}{\mathbb N}
\newcommand{\R}{\mathbb R}
\newcommand{\cB}{\mathcal B}
\newcommand{\cC}{\mathcal C}
\newcommand{\cD}{\mathcal D}
\newcommand{\cE}{\mathcal E}
\newcommand{\cG}{\mathcal G}
\newcommand{\cR}{\mathcal R}
\newcommand{\cV}{\mathcal V}
\newcommand{\cX}{\mathcal X}
\renewcommand{\leq}{\leqslant}
\renewcommand{\geq}{\geqslant}
\let\epsilon=\varepsilon
\numberwithin{equation}{section}
\newcommand\MYcurrentlabel{xxx}
\newcommand{\MYstore}[2]{%
  \global\expandafter \def \csname MYMEMORY #1 \endcsname{#2}%
}
\newcommand{\MYload}[1]{%
  \csname MYMEMORY #1 \endcsname%
}
\newcommand{\MYnewlabel}[1]{%
  \renewcommand\MYcurrentlabel{#1}%
  \MYoldlabel{#1}%
}
\newcommand{\MYdummylabel}[1]{}
\newcommand{\torestate}[1]{%
  \let\MYoldlabel\label%
  \let\label\MYnewlabel%
  #1%
  \MYstore{\MYcurrentlabel}{#1}%
  \let\label\MYoldlabel%
}
\newcommand{\restatetheorem}[1]{%
  \let\MYoldlabel\label
  \let\label\MYdummylabel
  \begin{theorem*}[Restatement of \prettyref{#1}]
    \MYload{#1}
  \end{theorem*}
  \let\label\MYoldlabel
}
\newcommand{\restatelemma}[1]{%
  \let\MYoldlabel\label
  \let\label\MYdummylabel
  \begin{lemma*}[Restatement of \prettyref{#1}]
    \MYload{#1}
  \end{lemma*}
  \let\label\MYoldlabel
}
\newcommand{\restateprop}[1]{%
  \let\MYoldlabel\label
  \let\label\MYdummylabel
  \begin{proposition*}[Restatement of \prettyref{#1}]
    \MYload{#1}
  \end{proposition*}
  \let\label\MYoldlabel
}
\newcommand{\restatefact}[1]{%
  \let\MYoldlabel\label
  \let\label\MYdummylabel
  \begin{fact*}[Restatement of \prettyref{#1}]
    \MYload{#1}
  \end{fact*}
  \let\label\MYoldlabel
}
\newcommand{\restate}[1]{%
  \let\MYoldlabel\label
  \let\label\MYdummylabel
  \MYload{#1}
  \let\label\MYoldlabel
}
\newcommand{\addreferencesection}{
  \phantomsection
\ifnum\stocmode=0
  \addcontentsline{toc}{section}{References}
\else
  \addcontentsline{toc}{section}{References \hspace*{1in} --------- End of extended abstract ---------}
\fi

}
\newcommand{\e}{\epsilon}
\let\origparagraph\paragraph
\renewcommand{\paragraph}[1]{\vspace*{-20pt}\hspace*{-5pt}\origparagraph{#1.}}
\let\pref=\prettyref
\newcommand{\dist}{\mathsf{dist}}
\newcommand{\diam}{\mathsf{diam}}
\renewcommand{\Ind}{\vvmathbb{1}}
\newcommand\f{\varphi}
\newcommand\myuline{\bgroup\markoverwith{\rule[-0.4ex]{2pt}{0.2mm}}\ULon}
\newcommand{\obar}[1]{\smash{\mkern3mu\overline{\mkern-3mu \vphantom{\scalebox{0.85}{\ensuremath{#1}}} \smash{#1}\mkern-3mu}\mkern3mu}}
\newcommand{\ubar}[1]{\smash{\mkern2mu\myuline{\mkern-2mu \smash{#1}\mkern-2mu}\mkern2mu}}
\newlist{assumptions}{enumerate}{10}
\setlist[assumptions]{label*=\arabic*}
\renewcommand{\deg}{\mathrm{deg}}
\newcommand{\dmax}{d_{\max}}
\newcommand{\len}{\mathrm{len}}
\renewcommand{\Z}{\vvmathbb{Z}}
\renewcommand{\R}{\vvmathbb{R}}
\begin{document}

\title{Discrete uniformizing metrics \\ on distributional limits of sphere packings}

   \author{James R. Lee \\ {\small University of Washington}}
\date{}

\maketitle

\begin{abstract}
   Suppose that $\{G_n\}$ is a sequence of finite graphs
   such that each $G_n$ is the tangency graph of a sphere packing in $\R^d$.
   Let $\rho_n$ be a uniformly random vertex of $G_n$ and suppose that
   $(G,\rho)$ is the distributional limit of $\{(G_n,\rho_n)\}$ in the
   sense of Benjamini and Schramm.
   Then the conformal growth exponent of $(G,\rho)$ is at most $d$.
   In other words, there exists a unimodular ``unit volume'' weighting of the graph metric
   on $(G,\rho)$ such that the volume growth of balls in the weighted path metric
   is bounded by a polynomial of degree $d$.
   This assertion generalizes to limits of graphs that can be ``quasi-packed'' in an Ahlfors $d$-regular
   metric measure space.

   It implies that,
   under moment conditions on the degree of the root $\rho$, 
   the almost sure spectral dimension
   of $G$ is at most $d$.
   This fact was known previously only for graphs packed in $\R^2$
   (planar graphs), and the case $d > 2$ eluded approaches
   based on extremal length.
   In the process of bounding the spectral dimension, we
   establish that the spectral measure of $(G,\rho)$
   is dominated by a variant of the $d$-dimensional Weyl law.
\end{abstract}

\begingroup
\hypersetup{linktocpage=false}
\tableofcontents
\endgroup

\section{Introduction}

The theory of random planar graphs has been an active
area of study in the last twenty years (see, e.g., \cite{BenjaminiICM}), inspired
partially by the connection to two-dimensional quantum gravity \cite{ADJ97}.
As noted by Benjamini and Curien \cite{BC11}, an analogous theory
in higher dimensions has proved elusive, in part based on the difficulty
of enumeration for higher-dimenisonal simplicial complexes (see \cite{BZ11}
and the references therein).

To address this discrepancy, the authors of \cite{BC11} explored
the extension of analytic and probabilistic methods based on potential theory.
A graph $G$ is said to be {\em sphere-packed in $\R^d$} if $G$
is the tangency graph of a collection of
interior-disjoint spheres in $\R^d$.
Benjamini and Curien proved that if a family of finite graphs can be sphere-packed in $\R^d$
with spheres of bounded aspect ratio (so that the ratio of the radii of tangent
spheres is $O(1)$), then a distributional limit of such graphs is
{\em $d$-parabolic.}

Roughly speaking, $d$-parabolicity means that the $\ell_d$ extremal
length from a fixed vertex to $\infty$ is infinite, where
the $\ell_d$ extremal length is a natural analog 
Cannon's vertex extremal length \cite{Cannon94} (the case $d=2$); see also 
\cite{Duffin62} and \pref{sec:vel}.
 It is well-known that the special case of $2$-parabolicity carries strong
probabilistic significance; for instance, for graphs with uniformly bounded degrees,
$2$-parabolicity is equivalent to recurrence of the random walk (see \cite{Duffin62,DS84}).
For $d > 2$, the theory of $\ell_d$ extremal length seems
somewhat less powerful, and is not known to yield such
control on the random walk.

In this work, we study a related notion
that one might refer to as the ``extremal growth rate\,.''
For graphs that can be sphere-packed in $\R^d$,
we show that it is possible to construct metrics that uniformize
their underlying geometry
so that the {\em counting measure} has $d$-dimensional
volume growth.
Employing the results of \cite{Lee17a},
one does obtain substantial probabilistic consequences,
including $d$-dimensional lower bounds on the diagonal heat kernel
(see \pref{thm:heat-kernel} below).
Moreover, our results hold in considerable generality;
they require no assumption on the ratio of radii of adjacent
balls in the packing,
and they extend to graphs that can be ``quasi-packed''
in an Ahlfors regular metric measure space, as we now describe.

\medskip
\noindent
{\bf Quasi-packings and the spectral dimension.}
Consider a metric space $(X,\dist)$.
A {\em $\tau$-quasi-ball in $X$} is a Borel set $S \subseteq X$
that is sandwiched between two closed balls:
$B(x,r) \subseteq S \subseteq B(x,\tau r)$
for some $x \in X, r > 0$.
Let $\cB_{\tau}$ denote the collection of $\tau$-quasi-balls in $X$.
Say that a graph $G$ is {\em $(\tau,M)$-quasi-packed in $(X,\dist)$}
if there is a mapping $\Phi : V(G) \to \cB_{\tau}$ 
that satisfies:
\begin{enumerate}
   \item {\bf Quasi-tangency:}
      \begin{equation}\label{eq:coarse-tan}
         \{u,v\} \in E(G) \implies \dist(\Phi(u),\Phi(v)) \leq \tau \min \left\{\diam(\Phi(u)),\diam(\Phi(v))\right\}.
      \end{equation}
   \item {\bf Quasi-multiplicity:}  For every $x \in X$ and $r \geq 0$:
      \begin{equation}\label{eq:bdd-mult}
         \# \left\{\vphantom{\bigoplus} v \in V(G) : B(x, r) \cap \Phi(v) \neq \emptyset \textrm{ and } \diam(\Phi(v)) \geq \tau r \right\} \leq M\,.
      \end{equation}
\end{enumerate}
Say that a graph $G$ {\em quasi-packs in $(X,\dist)$} if $G$ is $(\tau,M)$-quasi-packed in $(X,\dist)$
for some numbers $M,\tau~\geq~1$.
A family $\{G_n\}$ of graphs {\em uniformly quasi-packs in $(X,\dist)$}
if there are $M,\tau \geq 1$ such that each $G_n$ is $(\tau,M)$-quasi-packed in $(X,\dist)$.
Of course, the collection $\{ \Phi(v) : v \in V(G)\}$ is only a genuine packing for $M=1$.
We now state a representative theorem.

\begin{theorem}
   Consider a sequence of random rooted finite graphs $\{(G_n,\rho_n)\}$ with $\rho_n \in V(G_n)$
   chosen uniformly at random.
   Suppose the family $\{G_n\}$ has uniformly bounded degrees and is uniformly quasi-packed in
   an Ahlfors $d$-regular metric measure space.  If $(G,\rho)$ is the distributional limit of
   this sequence, then almost surely $\dimspecover(G) \leq d$.
   Moreover, if $d=2$, then $G$ is almost surely recurrent.
\end{theorem}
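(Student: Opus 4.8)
The plan is to reduce the whole statement to the single bound $\dimconf(G,\rho)\le d$ on the conformal growth exponent of the limit, and then quote \cite{Lee17a}. Fix the pair $(\tau,M)$ and the Ahlfors $d$-regular space $(X,\dist,\mu)$ witnessing the uniform quasi-packing, and let $\Delta$ be the common degree bound. For any graph $H$ that is $(\tau,M)$-quasi-packed via $\Phi\colon V(H)\to\cB_\tau$, define the vertex weight $w_\Phi(v):=\diam\Phi(v)$ and let $\dist_{w_\Phi}$ be the path pseudometric in which an edge $\{a,b\}$ has length $w_\Phi(a)+w_\Phi(b)$. The heart of the argument is a deterministic estimate, uniform over all such $(H,\Phi)$: there is $C=C(d,\tau,M,X)$ so that for every $v\in V(H)$ and every $r>0$,
\begin{equation*}
   \sum_{u\,:\,\dist_{w_\Phi}(v,u)\le r}\,w_\Phi(u)^d\ \le\ C\,r^d .
\end{equation*}

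I would prove this in four steps. \emph{(i) Bounded overlap.} Quasi-multiplicity \eqref{eq:bdd-mult} with $r=0$ gives $\#\{u\in V(H):x\in\Phi(u)\}\le M$ for every $x\in X$, hence $\sum_{u\in S}\mu(\Phi(u))\le M\,\mu\!\big(\bigcup_{u\in S}\Phi(u)\big)$ for any $S\subseteq V(H)$. \emph{(ii) Volume of a quasi-ball.} A $\tau$-quasi-ball $\Phi(u)$ contains a ball of radius at least $w_\Phi(u)/(2\tau)$, so Ahlfors regularity gives $\mu(\Phi(u))\ge c\,w_\Phi(u)^d$ for a constant $c=c(\tau,X)>0$. \emph{(iii) The ball sits in a bounded region.} Fix $v,r$ and a point $p\in\Phi(v)$; if $\dist_{w_\Phi}(v,u)\le r$ along a path $v=x_0,\dots,x_k=u$, then $\sum_i w_\Phi(x_i)\le r$, and quasi-tangency \eqref{eq:coarse-tan} with the triangle inequality propagates this to $\dist(p,\Phi(u))\le (1+\tau)r$; since $\diam\Phi(u)=w_\Phi(u)\le r$ as well, $\Phi(u)\subseteq B_X(p,(2+\tau)r)$. \emph{(iv) Conclusion.} Combining, $c\sum_u w_\Phi(u)^d\le\sum_u\mu(\Phi(u))\le M\,\mu\big(B_X(p,(2+\tau)r)\big)\le M c_{\mathrm{up}}((2+\tau)r)^d$, where $c_{\mathrm{up}}$ is the Ahlfors constant (valid at all scales, as $\mu(X)\le c_{\mathrm{up}}(\diam X)^d$ when $X$ is bounded). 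The estimate is uniform in $H,\Phi,v$ and scale-invariant in $w_\Phi$.

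Next I would pass to the distributional limit. For each $n$ set $\hat w_n:=w_{\Phi_n}/w_{\Phi_n}(\rho_n)$, so that $\hat w_n(\rho_n)=1$ and the displayed estimate holds verbatim for $\hat w_n$. Since $G_n$ is finite and $\rho_n$ uniform, $(G_n,\rho_n,\hat w_n)$ is a unimodular network, whatever the marks. The degree bound makes the graph parts precompact; for the marks, transporting mass $\hat w_n(u)^{d}\,\Ind[\dist_{G_n}(\cdot,u)\le\ell]$ (and likewise with exponent $-d$) and applying the mass-transport principle gives $\E\sum_{u\in B_{G_n}(\rho_n,\ell)}\hat w_n(u)^{\pm d}\le D_\ell\,\E\,\hat w_n(\rho_n)^{\pm d}=D_\ell$, where $D_\ell$ is the size of a radius-$\ell$ ball in the $\Delta$-regular tree. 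By Markov the marks in any fixed ball are tight in $(0,\infty)$, so along a subsequence $(G_n,\rho_n,\hat w_n)$ converges locally weakly to a unimodular weighted graph $(G,\rho,\hat w)$ whose underlying rooted graph is the given limit $(G,\rho)$. Lower semicontinuity and Fatou preserve both facts: almost surely $\hat w$ is finite and strictly positive everywhere (from $\E\sum_{u\in B_G(\rho,\ell)}\hat w(u)^{\pm d}\le D_\ell$), and $\sum_{u:\dist_{\hat w}(\rho,u)\le r}\hat w(u)^d\le Cr^d$ for every $r>0$. This is a unimodular unit-volume weighting witnessing $\dimconf(G,\rho)\le d$.

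Finally, apply \cite{Lee17a}: since the degrees are bounded every moment condition on $\deg(\rho)$ holds, and $\dimconf(G,\rho)\le d$ yields $\dimspecover(G)\le d$ almost surely, together with the diagonal heat-kernel lower bound of \pref{thm:heat-kernel}. When $d=2$, the bound $\sum_{u:\dist_{\hat w}(\rho,u)\le r}\hat w(u)^2\le Cr^2$ makes $(G,\rho)$ almost surely $2$-parabolic — the annulus $\{r\le\dist_{\hat w}(\rho,\cdot)<2r\}$ has $\hat w$-area $O(r^2)$ but must be crossed by any escaping path using $\dist_{\hat w}$-length $\ge r$, so the vertex extremal length from $\rho$ to $\infty$ diverges — and $2$-parabolicity with bounded degree is the Benjamini–Schramm recurrence criterion, so $G$ is almost surely recurrent (the general implication $\dimconf\le2\Rightarrow$ recurrence is recorded in \cite{Lee17a}). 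The step I expect to be the real obstacle is the deterministic estimate above: extracting genuinely polynomial volume growth with \emph{no} control on the ratio of sizes of adjacent quasi-balls — the absence of a ``ring lemma'' is exactly what defeated extremal-length approaches for $d>2$, and the point that rescues it is that quasi-multiplicity at scale $0$ already forces bounded overlap of the quasi-balls. A secondary, softer point is checking that root-normalization keeps the limiting weight non-degenerate while the scale-invariant growth bound survives the local weak limit.
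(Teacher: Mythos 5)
Your overall reduction (construct a normalized unimodular conformal weight on the limit with $d$-dimensional growth, then invoke \cite{Lee17a} for $\dimspecover\le d$ and for recurrence when $d=2$) is the same skeleton the paper uses, but the step that carries all the difficulty is wrong in your version. Your deterministic estimate bounds the \emph{weighted} volume $\sum_{u\in B_{w_\Phi}(v,r)}w_\Phi(u)^d\lesssim r^d$, which is essentially the packing-volume fact \eqref{eq:vmult} and is indeed easy from bounded overlap. But the conformal growth exponent \eqref{eq:overdef} is defined through the \emph{counting measure}: one needs $\#B_{\omega}(\rho,R)\lesssim R^{d+o(1)}$ for a weight normalized by $\E[\omega(\rho)^{d}]=1$ (equivalently $\frac1{|V|}\sum_x\omega(x)^{d}=1$ in the finite stage), and this is what all of the \cite{Lee17a} machinery (and \pref{thm:intro-dimspec1}) consumes. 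The naive weight $w_\Phi(v)=\diam\Phi(v)$ does \emph{not} satisfy such a counting bound: with no control on the ratio of sizes of adjacent quasi-balls, a $w_\Phi$-ball of bounded radius can contain arbitrarily many vertices of tiny weight (finite pieces of a circle packing of a hyperbolic triangulation, as in \pref{fig:hyperbolic}, satisfy your weighted-volume estimate while the cardinality of unit balls blows up). Closing exactly this gap is the content of \pref{thm:finite-sphere-packing}: the paper builds a \emph{multi-scale reweighting} of $\omega_0(v)=\mu(S_v)^{1/d}$ using $\Delta$-adic hierarchical systems, the level function, and the Benjamini--Schramm-type counting bound \pref{lem:BSmagic}, precisely so that every set of $2^k$ vertices is forced to have $\omega$-diameter $\gtrsim 2^{k/d_*}$ while the total $\ell^{d_*}$ mass stays proportional to $|V|$. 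Your proposal contains no substitute for this, so the claim ``this is a unimodular unit-volume weighting witnessing $\dimconf(G,\rho)\le d$'' does not follow.

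A second, independent problem is the limit-passage. Normalizing by the root value, $\hat w_n=w_{\Phi_n}/w_{\Phi_n}(\rho_n)$, makes the mark function depend on the root, and then $(G_n,\hat w_n,\rho_n)$ is \emph{not} a unimodular network (take $F(G,\omega,u,v)=\Ind[\omega(u)=1]$ to see the two sides of the Mass-Transport Principle disagree), so the MTP computation you use for tightness of the marks is invalid; moreover, without any ring-lemma-type bound on ratios of adjacent quasi-ball diameters, quantities like $\E\sum_{u\in B(\rho_n,\ell)}\hat w_n(u)^{-d}$ are genuinely uncontrolled, not just unproved. The paper avoids both issues by normalizing with the root-independent condition $\frac1{|V|}\sum_x\omega(x)^{d_*}=1$ and proving a compactness statement (\pref{lem:weak2}, feeding \pref{thm:QCG-limits}) that extracts a unimodular limit weight from a sequence of $L^q$-normalized unimodular conformal graphs. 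Your closing remarks for $d=2$ (annulus argument for $2$-parabolicity, then recurrence) are fine in spirit but rest on the same unestablished limit weight, so they do not repair the argument.
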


Here, ``distributional limit'' refers to convergence in the Benjamini-Schramm sense
(i.e., in the weak local topology):
$\{(G_n,\rho_n)\} \to (G,\rho)$ means that the laws
of neighborhoods of $\rho_n$ in $G_n$ converge
to the law of neigborhoods of $\rho$ in $G$,
where neighborhoods are considered up to rooted isomorphism.
See \pref{sec:unimodular} for precise definitions.

And we use $\dimspecover$ to denote the {\em upper spectral dimension:}
\[
   \dimspecover(G) \seteq \limsup_{n \to \infty} \frac{-2 \log p_{2n}^G(v,v)}{\log n}\,,
\]
where $p_t^G(v,v) = \Pr[X_t = v \mid X_0 = v]$ and $\{X_t\}$ is the standard random walk on $G$.
(The value does not depend on the choice of $v \in V(G)$.)

\begin{remark}[Coarse packings]
   It is not hard to check that
   if two metric spaces $X$ and $Y$ are bi-Lipschitz
   equivalent, then $G$ quasi-packs in $X$ if and only if $G$ 
   quasi-packs in $Y$, making the notion a bi-Lipschitz invariant.
   More generally, it is a quasisymmetric invariant
   when $X$ is uniformly perfect.
   See \pref{sec:stability}.

To relate quasi-packings to more standard notions,
it is helpful to consider a simpler set of assumptions.
Say that a graph $G$ {\em coarsely packs in $X$} if there
are numbers $M,\tau \geq 1$
and a map $\Phi : V(G) \to \cB_{1}$
so that \eqref{eq:coarse-tan} is satisfied,
as well as
\begin{equation}\label{eq:quasi-mult}
   \# \{ v \in V(G) : x \in \Phi(v) \} \leq M\quad \forall x \in X\,.
\end{equation}
Note that this is simply \eqref{eq:bdd-mult} for $r=0$
and $\cB_1$ is precisely the collection of closed balls in $X$.
If $(X,\dist)$ is an Ahlfors $d$-regular length space
(cf. \pref{sec:metric-spaces}) and $G$ coarsely packs in $X$,
then it quasi-packs in $X$.
This is proved in \pref{sec:round-bodies}.

This implies that if $G$ is the tangency graph of interior-disjoint spheres in $\R^d$,
then it is automatically $(\tau,M)$-quasi-packed in $\R^d$ for some $M,\tau \geq 1$ depending
only on $d$.
For a non-Euclidean example,
consider that the same is true of 
the tangency graphs of interior-disjoint balls
in the Heisenberg groups equipped with their 
Carnot-Carath{\'e}odory metrics.
See \pref{sec:round-bodies} for a detailed discussion.
In general, the reader will suffer no
great conceptual loss by thinking only of classical sphere packings in $\R^d$.
\end{remark}

\newcommand\myov{\bgroup\markoverwith{\rule[8.5pt]{0.1pt}{0.5pt}}\ULon}
\newcommand{\myubar}[1]{\mkern2mu\myov{\mkern-2mu #1\mkern-2mu}\mkern2mu}

\subsection{Discrete conformal metrics on sphere-packed graphs}

   Consider a locally finite, connected graph $G$.
A {\em conformal metric} (or {\em conformal weight}) on $G$ is a map $\omega : V(G) \to \R_+$.
This endows $G$ with a graph distance as follows:  Give to every edge $\{u,v\} \in E(G)$ a length
$\len_{\omega}(\{u,v\}) \seteq \frac12 (\omega(u)+\omega(v))$.
This prescribes to every path $\gamma = \{v_0, v_1, v_2, \ldots\}$ in $G$ the induced length
\[
   \len_{\omega}(\gamma) \seteq \sum_{k \geq 0} \len_{\omega}(\{v_k, v_{k+1}\})\,.
\]
Now for $u,v \in V(G)$,
one defines the path metric $\dist_{\omega}(u,v)$ as the infimum of the lengths of all $u$-$v$ paths in $G$.
Denote the closed ball
\[
   B_{\omega}(x,R) \seteq \left\{ y \in V(G) : \dist_{\omega}(x,y) \leq R \right\}\,.
\]
We can now state a special case of our main technical theorem;
the connection to distributional limits 
and random walks is discussed subsequently.

\begin{theorem}\label{thm:finite-sphere-packing-intro-0}
   For every $d,M,\tau \geq 1$ and every
   Ahlfors $d$-regular metric measure space $\cX$
   there is a constant $C$ such that the following holds.
   If $G=(V,E)$ is a finite graph that is $(\tau,M)$-quasi-packed in $\cX$, then
   there is a conformal metric $\omega : V \to \R_+$
   that satisfies
   \[
      \frac{1}{|V|} \sum_{x \in V} \omega(x)^{d} = 1\,,
   \]
   and 
   such that \[\max_{x \in V(G)} |B_{\omega}(x,R)| \leq C R^d (\log R)^{2}\quad \forall R \geq 1\,.\]
\end{theorem}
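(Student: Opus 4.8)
The plan is to construct $\omega$ explicitly from the packing map $\Phi$ via a multiscale analysis of the local density of the quasi-balls, and then verify the two conclusions separately. First I would set up and reduce: by the reduction of \pref{sec:round-bodies} I pass to a \emph{coarse} packing $\Phi\colon V\to\cB_1$ (honest closed balls $\Phi(v)=B(x_v,r_v)$ of bounded multiplicity $M$ satisfying \eqref{eq:coarse-tan}), losing only constants; rescale $\cX$ so that $\max_v\diam\Phi(v)=1$; write $k(v)$ for the dyadic scale with $\diam\Phi(v)\in(2^{-k(v)-1},2^{-k(v)}]$, let $V_k=\{v:k(v)=k\}$, and let $L$ be the largest occurring scale. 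Ahlfors $d$-regularity gives $\mu(\Phi(v))\asymp 2^{-k(v)d}$, and combined with bounded multiplicity it yields the basic counting estimate: for every $x\in\cX$, every $\rho\ge 2^{-k}$ and every $k$,
\[
   \#\{v\in V_k : \Phi(v)\cap B(x,\rho)\neq\emptyset\}\ \lesssim_{\,d,M}\ (\rho\,2^{k})^{d}.
\]
The naive choice $\omega(v):=\diam\Phi(v)$ (after normalization) does \emph{not} suffice: for a ``horn'' of tangent spheres whose tangency graph is essentially a $b$-ary tree with radii shrinking by a fixed factor $\beta$ per level and $b$ as large as a dense packing in $\R^d$ permits, the $\omega$-ball around a leaf already contains super-polynomially many vertices once $d\ge 3$. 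The lesson is that deep, dense clusters of small spheres must receive weight \emph{larger} than their radius.

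Accordingly, for each $v$ and each dyadic $\rho\in[\diam\Phi(v),1]$ let $N_\rho(v):=\#\{u\in V_{k(v)} : \Phi(u)\cap B(x_v,\rho)\neq\emptyset\}$ count the same-scale spheres meeting the scale-$\rho$ ball around $v$; by the counting estimate $1\le N_\rho(v)\lesssim(\rho\,2^{k(v)})^{d}$. I would then define
\[
   \omega(v)^{d}\ :=\ \frac{c_0}{L}\ \sum_{\rho}\ \frac{\rho^{d}}{N_\rho(v)}\ ,
\]
the sum over dyadic $\rho\in[\diam\Phi(v),1]$, with $c_0$ a global constant fixed so that $\tfrac1{|V|}\sum_v\omega(v)^d=1$. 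Two features are baked in: $\omega(v)\gtrsim\diam\Phi(v)$ always (from the $\rho=\diam\Phi(v)$ term, where $N_\rho(v)=O(1)$); and $\omega(v)$ is \emph{large} precisely when the spheres of $v$'s scale are \emph{sparse} near $v$ at all scales $\rho$, which is exactly what tames the tree-like clusters. The precise combining rule — this average versus, say, $\omega(v)=\min_\rho \rho\,N_\rho(v)^{-1/d}$, and exactly which window of scales $\rho$ to admit — is the one real degree of freedom, and pinning it down so that \emph{both} estimates below hold is the heart of the matter. For the normalization I would sum the definition and exchange orders: $\sum_v\omega(v)^d\asymp \tfrac{c_0}{L}\sum_k\sum_\rho\rho^d\sum_{v\in V_k}N_\rho(v)^{-1}$, and bound $\sum_{v\in V_k}N_\rho(v)^{-1}$ by a standard double-counting over a maximal $\rho$-separated subset of $\{x_v:v\in V_k\}$ together with the bounded overlap of the $\Phi(v)$'s; this is comparable to the number of $\rho$-balls covering $\bigcup_{v\in V_k}\Phi(v)$, hence $\lesssim \mu(\cX_0)/\rho^d$, where $\cX_0$ carries the packing. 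Each pair $(k,\rho)$ then contributes $\lesssim\mu(\cX_0)$, summing $\rho$ over the $O(\log)$ relevant scales and $k$ likewise costs two more $\log$'s that the $\tfrac1L$ absorbs down to one, and since $\mu(\cX_0)\lesssim\sum_v\mu(\Phi(v))\asymp\sum_v\diam\Phi(v)^d$ one gets $\sum_v\omega(v)^d\lesssim|V|$, so $c_0$ can be chosen as claimed. The subtlety in this step is to confirm that the $\tfrac1L$ factor does not make $\omega$ \emph{too small} on some vertex — which is exactly why the multiscale average, rather than any single scale, is used.

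For the ball estimate, fix $x\in V$ and $R\ge 1$. Every $y\in B_\omega(x,R)$ is joined to $x$ by a path $\gamma$ of $\omega$-length $\le R$; along $\gamma$, quasi-tangency gives $\dist_\cX(x_{v_i},x_{v_{i+1}})\lesssim\diam\Phi(v_i)+\diam\Phi(v_{i+1})\lesssim\omega(v_i)+\omega(v_{i+1})$ (using $\omega\gtrsim\diam\Phi$), so $\dist_\cX(x_x,x_y)\lesssim\sum_{v\in\gamma}\omega(v)\lesssim R$; thus $\{x_y:y\in B_\omega(x,R)\}\subseteq B(x_x,CR)$. Now split $B_\omega(x,R)$ by scale. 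For each $k$ I would show $|B_\omega(x,R)\cap V_k|\lesssim R^d\log R$: these vertices lie in $B(x_x,CR)$ and, via the paths $\gamma$, form chains that cannot cross a dense scale-$k$ cluster of $\omega$-diameter exceeding $\sim R$; by the construction of $\omega$, a cluster of $N$ scale-$k$ spheres has $\omega$-diameter $\gtrsim N^{1/d}$, so only $\lesssim R^d$ of its members are reached, and the counting estimate controls (up to a $\log$) how many such clusters $B(x_x,CR)$ meets. Summing over the $O(\log R)$ scales $k$ that meet $B(x_x,CR)$ yields $|B_\omega(x,R)|\lesssim R^d(\log R)^2$.

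The main obstacle is the claim at the core of the last step — that a dense cluster of $N$ same-scale spheres has $\omega$-diameter $\gtrsim N^{1/d}$ — together with its compatibility with the normalization. This is precisely the tension the weight is designed to resolve: inflating $\omega$ on dense clusters to slow ball growth, while not overspending the total weight budget that the unit-volume constraint imposes. The bookkeeping is delicate because a single path may oscillate across many scales, and a single vertex participates in the cluster structure of its scale at \emph{every} larger radius $\rho$ simultaneously, so the ``$N^{1/d}$'' lower bound must be extracted from the whole multiscale sum defining $\omega$, not from one term. I expect that getting the combining rule exactly right is where most of the work lies; should a clean explicit formula prove too rigid, the fallback is to define $\omega$ as the optimizer of a convex relaxation and recover the estimates by an LP/minimax duality argument, with the two $\log$ factors emerging from summing modulus-type bounds over the $O(\log R)$ scales.
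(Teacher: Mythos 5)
There is a genuine gap, and it sits exactly where you flagged ``the heart of the matter'': the combining rule you wrote down does the opposite of what your own heuristic demands. Placing $N_\rho(v)$ in the \emph{denominator} makes $\omega$ small where same-scale spheres are dense and large where they are sparse, so dense accumulating clusters are deflated, not inflated. Concretely, take a graph half of which is a benign $\Z^d$-like packing (this pins the normalizing constant $c_0=O(1)$) and half of which is a hyperbolic-type configuration in a bounded region, where at every scale $2^{-k}$ the scale-$k$ spheres fill a definite fraction of the volume (accumulation points that themselves accumulate, as in \pref{fig:hyperbolic}). For a vertex $v$ in that cluster your formula gives $\omega(v)^d\lesssim \frac{1}{L}\bigl(2^{-k(v)d}\log(\cdot)+N^{-1}\bigr)$, so descending through the scales costs $\omega$-length $O(1)$ and $B_\omega(x,O(1))$ swallows a number of vertices exponential in the depth, violating the bound. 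In particular the key claim of your ball estimate --- that $N$ same-scale spheres have $\omega$-diameter $\gtrsim N^{1/d}$ --- is false for this weight, and no same-scale statistic can repair it: the danger comes from many vertices of \emph{different} scales crowding into a region of small measure, so the inflation factor must be driven by the total vertex count near $v$ at each geometric scale, placed in the numerator. Two further problems: the $1/L$ prefactor destroys the lower bound $\omega\gtrsim\diam\Phi(v)$ (you only get $\omega\gtrsim L^{-1/d}\diam\Phi(v)$, and $L$ is not controlled by $R$ or by the hypotheses), which you use to trap $\omega$-balls inside metric balls $B(x_x,CR)$; and your opening reduction runs backwards --- \pref{thm:weak-to-coarse} shows coarse packings yield quasi-packings (and only under a length-space hypothesis absent here), not that a quasi-packing can be replaced by a coarse one, so the argument must handle $\tau$-quasi-balls directly.

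For comparison, the paper's construction in \pref{sec:sphere-packings} realizes your intended mechanism but with the density statistic inverted and made multi-scale: it fixes $\Delta$-adic hierarchical partitions (\pref{thm:ensemble}), assigns each cube $(C,n)$ a level $\level_{\bm{P}}(C,n)$ recording how many vertices of the cube are not captured by any single child, proves a Benjamini--Schramm-type flow lemma (\pref{lem:BSmagic}) showing there are at most $O(s|V|/2^j)$ cubes of level $j$ (this is what keeps the total $\ell^{d}$ budget bounded), and then \emph{boosts} the base weight $\omega_0(v)=\mu(S_v)^{1/d}$ near each such cube by a factor $\asymp 2^{\level/d_*}\min\{\Delta^{-n},1/\omega_0(v)\}$ as in \eqref{eq:theta-def}. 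The growth bound is then proved by exhibiting, for any $2^k$-point set $U$, a nested chain of cubes each capturing a maximal share of $U$, decomposing an arbitrary path into crossings of annular neighborhoods of these cubes, treating ``large bodies'' and ``small bodies'' separately, and summing the per-cube contributions via the convexity estimate of \pref{cor:helper5}. If you want to salvage your draft, the minimal fix is to replace $\sum_\rho \rho^d/N_\rho(v)$ by a sum over scales of (all-scale vertex counts near $v$ at scale $\rho$)$\,/\rho^{d}$-type boosts and to find the analogue of \pref{lem:BSmagic} that bounds their total cost --- at which point you are essentially reconstructing the paper's argument.
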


The method of proof is based partially on a celebrated lemma
of Benjamini and Schramm \cite{BS01}.  They show that if $\{G_n\}$
is a sequence of finite planar triangulations with uniformly bounded degrees
and $\{G_n\}$ converges to a distributional limit $(G,\rho)$,
then almost surely any circle packing of $G$ has at most one
accumulation point in the plane. 
An analogous result holds for graphs sphere-packed in $\R^d$ when $d > 2$ \cite{BC11}.

We argue that, in a quantative sense, as long
as the accumulation points remain separated, one can construct
a multi-scale reweighting of the spheres
in the packing, endowing the graph with a metric that reflects its $d$-dimensional
structure with respect to the underlying counting measure.
This is carried out in \pref{sec:sphere-packings}.

\subsection{Conformal growth exponents}

If $(G,\rho)$ is random rooted graph, then a {\em conformal metric on $(G,\rho)$}
is a random triple $(G',\omega,\rho')$ with $\omega : V(G) \to \R_+$
such that $(G,\rho)$ and $(G',\rho')$
have the same law.  We say that the conformal weight is {\em normalized} if $\E\left[\omega(\rho)^2\right] = 1$.
One thinks of such a metric $\omega : V(G) \to \R_+$ as deforming the geometry
of the underlying graph subject to a bound
on the total ``area.''
As shown in \cite{Lee17a}, 
normalized conformal metrics
with nice geometric properties form a powerful tool in understanding
the spectral geometry of $(G,\rho)$.

In the present work, we consider {\em unimodular} random graphs (see \pref{sec:unimodular});
such graphs arise naturally as distributional limits of finite random rooted graphs $\{(G_n,\rho_n)\}$
where $\rho_n \in V(G_n)$ is chosen uniformly at random.
We will consider only unimodular conformal metrics $\omega$ on $(G,\rho)$;
in other words, the setting where $(G,\omega,\rho)$ is unimodular as a marked network
in the sense of \cite{aldous-lyons}.

\medskip
\noindent
{\bf Conformal growth exponents.}
Consider a unimodular random graph $(G,\rho)$.
In \cite{Lee17a}, we defined the {\em upper and lower conformal growth exponents of $(G,\rho)$}, respectively, by
\begin{align}
   \label{eq:overdef}
   \dimconfover(G,\rho) & \seteq \inf_{\omega} \limsup_{R \to \infty} \frac{\log \|\#B_{\omega}(\rho,R)\|_{L^\infty}}{\log R}\,, \\
   \label{eq:underdef}
   \dimconfunder(G,\rho) & \seteq \inf_{\omega} \liminf_{R \to \infty} \frac{\log \|\#B_{\omega}(\rho,R)\|_{L^\infty}}{\log R}\,,
\end{align}
where the infimum is over all normalized unimodular conformal metrics on $(G,\rho)$, and 
we use
$\|X\|_{L^{\infty}}$ to denote the essential supremum of a random variable $X$,
and $\# S$ to denote the cardinality of a set $S$.

When $\dimconfover(G,\rho) = \dimconfunder(G,\rho)$, define the {\em conformal growth exponent}
by
\[
   \dimconf(G,\rho) \seteq \dimconfover(G,\rho) = \dimconfunder(G,\rho)\,.
\]
Note that the quantities $\dimconfover,\dimconfunder,\dimconf$ are functions of the law of $(G,\rho)$;
they are not defined on (fixed) rooted graphs.

The conformal growth exponent bears a philosophical
resemblance to Pansu's notion of {\em conformal dimension} \cite{Pansu89}.
The relationship between sphere packings in $\R^2$ and conformal mappings
is classical and well-understood.
For an emerging more general theory,
we refer to Pansu's recent work \cite{Pansu16}
which explores in detail the relationship between
sphere packings and the theory of large-scale conformal maps.

\medskip
\noindent
{\bf $L^q$ conformal growth rate.}
Let us define a generalization:  If $(G,\omega,\rho)$ is a unimodular random conformal graph,
we denote
\[
   \|\omega\|_{L^q} \seteq \left(\E \omega(\rho)^q\right)^{1/q}\,.
\]
Say that $\omega$ is {\em $L^q$-normalized} if $\|\omega\|_{L^q}=1$.

Define the analogous $L^q$ quantities:  $\dimconfover^q, \dimconfunder^q, \dimconf^q$
where now the infima in \eqref{eq:overdef} and \eqref{eq:underdef} are over all $L^q$-normalized
conformal metrics on $(G,\rho)$.
Observe that, by monotonicity of $L^q$ norms, we have
\[
   q \leq q' \implies \dimconf^q(G,\rho) \leq \dimconf^{q'}(G,\rho)\,.
\]

The next theorem constitutes the main new technical theorem presented here.
We use $\todl$ to denote convergence
in the distributional sense; see \pref{sec:unimodular}.

\begin{theorem}\label{thm:Rd-packings}
   For any $d\geq 2$, the following holds.
   If $(G,\rho)$ is the distributional limit of finite graphs that are
   uniformly quasi-packed in an Ahlfors $d$-regular metric measure space,
   then there is an $L^{d}$-normalized unimodular conformal metric $\omega : V(G) \to \R_+$ such that
   almost surely, 
   for all $R \geq 1$,
   \begin{equation}\label{eq:rd-packings}
      \left|B_{\omega}(\rho,R)\right| \leq O(R^{d} (\log R)^{2})\,.
   \end{equation}
   In particular, $\dimconfover(G,\rho) \leq d$.
\end{theorem}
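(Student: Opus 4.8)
The plan is to derive \pref{thm:Rd-packings} from its finite counterpart \pref{thm:finite-sphere-packing-intro-0} by passing to the distributional limit. Since $\{G_n\}$ is uniformly quasi-packed in a fixed Ahlfors $d$-regular space $\cX$, there are numbers $M,\tau \geq 1$ so that every $G_n$ is $(\tau,M)$-quasi-packed in $\cX$, and hence \pref{thm:finite-sphere-packing-intro-0} produces, with a single constant $C = C(d,M,\tau,\cX)$, a conformal metric $\omega_n : V(G_n) \to \R_+$ satisfying $\frac{1}{|V(G_n)|}\sum_{x} \omega_n(x)^d = 1$ and $\max_{x} |B_{\omega_n}(x,R)| \leq C R^d(\log R)^2$ for all $R \geq 1$. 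Choosing $\omega_n$ as a measurable function of $G_n$, the triple $(G_n,\omega_n,\rho_n)$ is a random rooted marked network with uniformly random root, hence unimodular. I would then extract a subsequential local weak limit $(G_n,\omega_n,\rho_n) \todl (G,\omega,\rho)$ in the space of marked networks; its $(G,\rho)$-marginal is the prescribed distributional limit, and since the class of unimodular marked networks is closed under distributional limits (see \pref{sec:unimodular} and \cite{aldous-lyons}), $\omega$ is a unimodular conformal metric on $(G,\rho)$. For the subsequential limit to exist one needs tightness of the laws of $(G_n,\omega_n,\rho_n)$ in the local topology: the constraint $\E[\omega_n(\rho_n)^d] = 1$ makes the law of the root mark tight by Markov's inequality, and the mass-transport structure coming from the uniform root propagates this tightness to the marks in any fixed finite neighborhood of $\rho_n$, which is the standard mechanism for taking limits of unimodular marked networks.

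Next I would transfer the volume bound. The estimate $|B_{\omega_n}(\rho_n,R)| \leq C R^d(\log R)^2$ is deterministic and uniform in $n$ and in the root. The point is that ball cardinality is lower semicontinuous under marked local weak convergence: if $\dist_{\omega}(\rho,y) < R$ in the limit, then a path from $\rho$ to $y$ of $\omega$-length strictly less than $R$ appears, with marks perturbed by an arbitrarily small amount and hence still of $\omega_n$-length $< R$, inside $G_n$ for all large $n$, so the corresponding vertex lies in $B_{\omega_n}(\rho_n,R)$. Therefore $\#\{y : \dist_{\omega}(\rho,y) < R\} \leq \liminf_n |B_{\omega_n}(\rho_n,R)| \leq C R^d (\log R)^2$ almost surely, and letting the radius decrease to $R$ yields $\|\#B_{\omega}(\rho,R)\|_{L^\infty} \leq C R^d (\log R)^2$ for every $R \geq 1$.

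The one step that is not automatic — and the main obstacle — is recovering the normalization, because conformal mass may escape to infinity as $n \to \infty$; a priori one only obtains $\E[\omega(\rho)^d] \leq \liminf_n \E[\omega_n(\rho_n)^d] = 1$ by applying Fatou's lemma to the distributional convergence of $\omega_n(\rho_n)$. I would resolve this in two steps. First, $\E[\omega(\rho)^d] > 0$: otherwise $\omega(\rho) = 0$ almost surely, so $\omega \equiv 0$ almost surely by re-rooting invariance, whence every $\dist_{\omega}$-ball equals the whole component of $\rho$; on the event that $G$ is infinite — which has positive probability unless the assertion is immediate — this contradicts the finite volume bound just established. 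Second, put $\hat\omega \seteq \omega / \|\omega\|_{L^d}$; this is $L^d$-normalized and unimodular, and since $\|\omega\|_{L^d} \leq 1$ we have $\dist_{\hat\omega} \geq \dist_{\omega}$, so $B_{\hat\omega}(\rho,R) \subseteq B_{\omega}(\rho,R)$ and the same bound $\|\#B_{\hat\omega}(\rho,R)\|_{L^\infty} \leq C R^d(\log R)^2 = O(R^d(\log R)^2)$ persists. Finally, this gives $\dimconfover(G,\rho) \leq \limsup_{R\to\infty} \frac{\log(C R^d(\log R)^2)}{\log R} = d$, completing the proof.

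Beyond the escape-of-mass issue, the two auxiliary points that require care are the tightness of the marked networks (delicate precisely because quasi-packings do not force bounded degree, so a single vertex may carry many small neighbors) and the lower semicontinuity of ball cardinality under marked convergence; both are, however, routine once the marked local weak limit is set up correctly.
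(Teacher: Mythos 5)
Your argument is correct and is essentially the route the paper takes: the paper also deduces the statement from the finite uniformization theorem by extracting a weak limit of the marked triples $(G_n,\omega_n,\rho_n)$, and the tightness step you describe as ``routine'' is precisely \pref{lem:weak2}, whose proof uses exactly the Markov-plus-mass-transport mechanism you sketch, so that ingredient is available rather than left open. The only structural difference is the order of operations: the paper passes to the limit one scale at a time (\pref{thm:finite-sphere-packing} together with \pref{thm:QCG-limits}, yielding gauged growth of $(G,\rho)$ as in \pref{thm:intro-scaled}) and then sums the scale metrics on the limit via $\hat\omega=\bigl(\frac{6}{\pi^2}\sum_{k\geq 0}\omega_k^d/k^2\bigr)^{1/d}$, whereas you perform the scale summation at the finite level by invoking \pref{thm:finite-sphere-packing-intro-0} and take a single limit; the two orders are interchangeable and incur the same $(\log R)^2$ factor. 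One small correction to your normalization step: the right dichotomy is whether $\#V(G)$ is essentially bounded, not whether $G$ is infinite --- if $G$ is almost surely finite but of unbounded size, the conclusion is not ``immediate,'' yet your contradiction still applies verbatim, since $\omega\equiv 0$ would force $\|\#B_{\omega}(\rho,R)\|_{L^{\infty}}=\|\#V(G)\|_{L^{\infty}}=\infty$, violating the transferred bound, while in the essentially bounded case the constant metric suffices. With that patch, and granting \pref{lem:weak2}, your proposal is complete.
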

The last assertion follows from $\dimconfover(G,\rho) = \dimconfover^2(G,\rho) \leq \dimconfover^d(G,\rho)$.
If $\cX$ is Ahlfors $d$-regular with $d < 2$, the conclusion $\dimconfover(G,\rho) \leq 2$ still holds;
see \pref{sec:sphere-packings}.
We remark that some $(\log R)^{O(1)}$ factor is necessary
even for the case of planar graphs; see \cite[\S 2]{Lee17a}.

\medskip

A primary motivation for \pref{thm:Rd-packings} is that such metrics
can be used to obtain estimates on the heat kernel and spectral measure of $G$.
  For a locally finite, connected graph $G$, denote the discrete-time
      heat kernel
      \[p^G_T(x,y) \seteq \Pr[X_T=y \mid X_0=x], \qquad x,y \in V(G)\,,\]
      where $\{X_n\}$ is the standard random walk on $G$ and $T \in \N$.
      We recall the {\em spectral dimension of $G$:}
      \[
         \dimspec(G) \seteq \lim_{n \to \infty} \frac{-2 \log p^G_{2n}(x,x)}{\log n},
      \]
      whenever the limit exists.  If the limit does exist, then it is the same for all $x \in V(G)$.

      Say that a real-valued random variable $X$ has {\em negligible tails}
      if its tails decay faster than any inverse polynomial:
      \begin{equation}\label{eq:neg-tails-def}
         \lim_{n \to \infty} \frac{\log n}{|\!\log \Pr[|X| > n]|} = 0\,,
      \end{equation}
      where we take $\log(0)=-\infty$ in the preceding definition
      (in the case that $X$ is essentially bounded).
The next theorem is from \cite{Lee17a};
it asserts that if $\dimconfover(G,\rho) \leq d$, then 
almost surely $G$ admits $d$-dimensional lower bounds on the diagonal heat kernel:
\[
   p^G_{2n}(\rho,\rho) \geq n^{-d/2-o(1)} \quad \textrm{as} \quad n \to \infty\,.
\]

\begin{theorem}\label{thm:intro-dimspec1}
   Suppose that $(G,\rho)$ is a unimodular random graph such that $\deg_G(\rho)$
   has negligible tails.
   Then almost surely:
   \[
      \dimspecover(G) \leq \dimconfover(G,\rho)\,.
   \]
   In particular, if there is a number $d$ such that almost surely $\dimspec(G)=d$, then $d \leq \dimconfover(G,\rho)$.
\end{theorem}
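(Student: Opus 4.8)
Since $\dimspec(G)$, when it exists, equals $\dimspecover(G)$, the ``in particular'' clause is immediate from the first assertion, so the task is to show $\dimspecover(G) \le \dimconfover(G,\rho)$ almost surely; this is the main theorem of \cite{Lee17a}, and I sketch the route I would take. Unwinding the definition of $\dimspecover$ as a $\limsup$, it suffices to prove that for every $\eps>0$, almost surely $p^G_{2n}(\rho,\rho) \ge n^{-(\dimconfover(G,\rho)+\eps)/2 - o(1)}$ as $n\to\infty$, and then let $\eps\downarrow 0$ along a countable sequence. Fix $\eps>0$ and, using the definition of $\dimconfover$, a normalized unimodular conformal metric $\omega$ on $(G,\rho)$ (such as the one produced by \pref{thm:Rd-packings} in our setting of interest) with $\#B_\omega(\rho,R) \le R^{\dimconfover(G,\rho)+\eps}$ almost surely for all large $R$.

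The engine is the standard reversibility bound: for any finite vertex set $A\ni\rho$,
\[
   p^G_{2n}(\rho,\rho) \;=\; \deg(\rho)\sum_{y\in V(G)} \frac{p^G_n(\rho,y)^2}{\deg(y)} \;\ge\; \frac{\deg(\rho)\,\Pr[X_n\in A \mid X_0=\rho]^2}{\sum_{y\in A}\deg(y)}\,,
\]
by reversibility of the walk and Cauchy--Schwarz. I would take $A = B_\omega(\rho, R_n)$ with $R_n = n^{1/2+\eps}$, so that it remains to establish two estimates: \textbf{(i)} $\Pr[X_n \in B_\omega(\rho, R_n) \mid X_0 = \rho] \ge \tfrac12$ for all large $n$, and \textbf{(ii)} $\sum_{y\in B_\omega(\rho, R_n)}\deg(y) \le n^{(\dimconfover(G,\rho)+\eps)/2 + o(1)}$ almost surely for all large $n$; given these and $\deg(\rho)\ge 1$, the displayed inequality closes the argument.

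Estimate (ii) is where unimodularity and the negligible-tail hypothesis enter, and I expect it to be routine. By the mass-transport principle applied to the unimodular marked network $(G,\omega,\rho)$ one has $\E\bigl[\sum_{y\in B_\omega(\rho,R)}\deg(y)\bigr] = \E\bigl[\deg(\rho)\,\#B_\omega(\rho,R)\bigr]$; more to the point, the same transport bounds $\E\bigl[\#\{y\in B_\omega(\rho,R_n): \deg(y)>n^\delta\}\bigr]$ by $R_n^{\dimconfover(G,\rho)+\eps}\,\Pr[\deg(\rho)>n^\delta]$, which is summable in $n$ because the tails of $\deg(\rho)$ are negligible, so by Borel--Cantelli every vertex of $B_\omega(\rho,R_n)$ has degree at most $n^\delta$ eventually; multiplying by the volume bound $\#B_\omega(\rho,R_n)\le R_n^{\dimconfover(G,\rho)+\eps}$ and sending $\eps,\delta\downarrow0$ yields (ii). The crux is estimate (i): the walk must travel $\dist_\omega$-distance at most $n^{1/2+\eps}$ in $n$ steps with non-vanishing probability, i.e.\ it must be \emph{at most diffusive with respect to the conformal metric}. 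This does not follow from crude path-length bounds --- the estimate $\dist_\omega(\rho,X_n)\le\sum_{k=0}^n \omega(X_k)$ combined with stationarity of the environment seen from the degree-biased walk only gives $\dist_\omega(\rho,X_n)=O(n)$, losing a factor of two in the exponent --- so one must exploit cancellation. The route I would take is to establish a \emph{pseudo-Poincar\'e inequality} $\sum_{v}\deg(v)\bigl(f(v)-\bar f_{B_\omega(v,r)}\bigr)^2 \le C\,r^2\,\cE(f,f)$ for all $f\in\ell^2(V,\deg)$, where $\cE(f,f)=\sum_{\{u,v\}\in E}(f(u)-f(v))^2$ is the Dirichlet form of the walk: this is proved by routing, for each pair $u,w$ with $\dist_\omega(u,w)\le r$, a path of $\omega$-length $O(r)$, expanding $f(u)-f(w)$ as a telescoping sum along it, applying Cauchy--Schwarz with weights $\len_\omega$, and summing --- the quasi-tangency and bounded-multiplicity structure of the packing keep the resulting overlaps under control. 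Combined with the volume upper bound $\#B_\omega(\rho,R)\le R^{\dimconfover(G,\rho)+\eps}$, this pseudo-Poincar\'e inequality is exactly the input to the moment method of Coulhon (and Barlow) that converts a volume \emph{upper} bound into the at-most-diffusive escape estimate (i), and hence, via the display above, into the on-diagonal heat-kernel \emph{lower} bound. I expect the bookkeeping in the pseudo-Poincar\'e step --- making the constant $C$ depend only on the Ahlfors-regularity data and the packing parameters $\tau,M$, rather than on the random local geometry near $\rho$, and propagating this through the environment-process formalism --- to be the most delicate part.
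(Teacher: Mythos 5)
The statement you are proving is not actually proved in this paper: it is quoted verbatim from \cite{Lee17a}, and the present paper only uses it as a black box. So the comparison has to be between your sketch and the route of \cite{Lee17a}, whose machinery (as reflected in what this paper imports: the bump-function construction \pref{thm:bumps} = [Lee17a, Thm.~3.13], the Rayleigh-quotient-to-eigenvalue bound [Lee17a, Cor.~3.1], and the spectral-measure/Weyl-type statements of \S 1.5) is spectral: one builds disjointly supported test functions of small Rayleigh quotient from a conformal metric of controlled growth and localizes at the root via the Mass-Transport Principle; no displacement estimate for the walk in the conformal metric is proved or used. Your outer frame is fine as far as it goes: the reduction of the ``in particular'' clause, the reversibility/Cauchy--Schwarz inequality $p^G_{2n}(\rho,\rho) \geq \deg(\rho)\Pr[X_n\in A]^2/\sum_{y\in A}\deg(y)$, and your estimate (ii) (MTP plus negligible tails plus Borel--Cantelli to control $\sum_{y\in B_\omega(\rho,R_n)}\deg(y)$) are all correct and routine.

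The genuine gap is exactly where you place the whole weight of the argument: estimate (i), the claim that the walk is at most diffusive in $\dist_\omega$. First, your proposed proof of the pseudo-Poincar\'e inequality invokes ``the quasi-tangency and bounded-multiplicity structure of the packing'' and constants depending on ``the Ahlfors-regularity data and the packing parameters $\tau,M$'' --- but \pref{thm:intro-dimspec1} is a statement about an \emph{arbitrary} unimodular random graph with negligible degree tails and an arbitrary normalized unimodular conformal metric; no packing, no ambient Ahlfors-regular space, and no Theorem~\ref{thm:Rd-packings} metric are among its hypotheses, so none of that structure is available. Second, even granting packing structure, a pseudo-Poincar\'e inequality with respect to $\dist_\omega$ simply does not follow from an $L^2$-normalization and a volume \emph{upper} bound: Poincar\'e-type inequalities are sensitive to bottlenecks in the $\omega$-geometry, which the hypotheses do not exclude, and nothing of this kind is claimed in \cite{Lee17a}. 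Third, the Coulhon--Grigor'yan/Barlow mechanism you cite for converting a volume upper bound into a non-escape estimate relies crucially on a \emph{universal} Gaussian displacement bound (Carne--Varopoulos for graphs, Davies--Gaffney for Dirichlet forms) \emph{in the same metric in which the volume is measured}; for $\dist_\omega$ no such a priori speed bound exists, since a single step of the walk can traverse an arbitrarily large $\dist_\omega$-distance --- this metric mismatch is precisely the difficulty, and your own observation that the crude bound $\dist_\omega(\rho,X_n)\le\sum_k\omega(X_k)$ only gives a ballistic estimate shows you are aware of it. As it stands, (i) is an unproven (and possibly false, in this generality) assertion, so the proposal does not constitute a proof; the actual argument of \cite{Lee17a} avoids it by working with test functions and the spectral measure rather than with the trajectory of the walk.
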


In certain situations, one can give stronger estimates.
Indeed, when the conformal growth rate has only a polylogarithmic correction
as in \eqref{eq:rd-packings}, one obtains stronger results (see \cite[\S 4.2]{Lee17a}).

\begin{theorem}\label{thm:heat-kernel}
   Suppose $(G,\rho)$ is the distributional limit of finite graphs
   that are uniformly quasi-packed in an Ahlfors $d$-regular metric measure space $\cX$,
   and that $\deg_G(\rho)$ has exponential tails in the sense that
   \[
      \Pr[\deg_G(\rho) > k] \leq e^{-ck}
   \]
   for some $c > 0$.
   Then there is a constant $C \geq 1$ such that for $n$ sufficiently large,
   \[
      \Pr\left[p^G_{2n}(\rho,\rho) \geq \frac{n^{-d_*/2}}{(\log n)^C}\right] \geq 1 - \frac{1}{\log n}\,,
   \]
   where $d_* = \max(d,2)$.
\end{theorem}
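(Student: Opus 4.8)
The plan is to obtain this by feeding the conformal metric of \pref{thm:Rd-packings} into the quantitative diagonal heat-kernel lower bound of \cite[\S 4.2]{Lee17a}, the polylogarithmically precise strengthening of \pref{thm:intro-dimspec1}. Write $d_* = \max(d,2)$. By \pref{thm:Rd-packings} together with the discussion following it (which handles the case $d < 2$; see \pref{sec:sphere-packings}), there is an $L^{d_*}$-normalized unimodular conformal metric $\omega : V(G) \to \R_+$ such that
\[
   \|\#\ballo(\rho,R)\|_{L^{\infty}} \leq O\!\bigl(R^{d_*} (\log R)^2\bigr) \qquad \text{for all } R \geq 1\,.
\]
Since $d_* \geq 2$, monotonicity of $L^q$-norms gives $\|\omega\|_{L^2} \leq \|\omega\|_{L^{d_*}} = 1$. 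From here the argument is that of \cite{Lee17a} applied to this fixed $\omega$, so the content genuinely new to the present paper is the construction of $\omega$ --- that is, \pref{thm:Rd-packings} itself.

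For completeness I would recall the mechanism, which rests on three ingredients. \emph{(i) Confinement.} The $L^2$ speed estimate of \cite{Lee17a} --- which exploits the stationarity and reversibility of the random walk on a unimodular random graph (\cite{aldous-lyons}) and the normalization $\|\omega\|_{L^2} = 1$ --- gives, in the annealed sense, $\E\bigl[\disto(\rho,X_n)^2 \mid X_0 = \rho\bigr] = O(n)$; two applications of Markov's inequality (first over $(G,\rho)$, then over the walk) then show that for a suitable radius $R_n = \sqrt{n}\,(\log n)^{O(1)}$, with $(G,\rho)$-probability at least $1 - \tfrac{1}{2\log n}$ the quenched probability $\Pr[X_n \in \ballo(\rho,R_n) \mid X_0 = \rho]$ is at least $\tfrac12$. \emph{(ii) A Cauchy--Schwarz return bound.} For any $A \subseteq V(G)$ with $\rho \in A$, reversibility with respect to the degree measure gives
\[
   p^G_{2n}(\rho,\rho) = \sum_{y} \frac{\deg_G(\rho)}{\deg_G(y)}\, p^G_n(\rho,y)^2 \;\geq\; \frac{\deg_G(\rho)}{\bigl(\max_{y \in A} \deg_G(y)\bigr)\,|A|}\left(\sum_{y \in A} p^G_n(\rho,y)\right)^{2},
\]
so with $A = \ballo(\rho,R_n)$, on the event from (i) we obtain $p^G_{2n}(\rho,\rho) \geq \tfrac14\,\deg_G(\rho)\,\bigl(\max_{y\in \ballo(\rho,R_n)}\deg_G(y)\bigr)^{-1}\,|\ballo(\rho,R_n)|^{-1}$.

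\emph{(iii) Degree control.} Here the exponential-tail hypothesis enters. Since the bound on $\#\ballo$ holds surely, a mass-transport computation yields, for every $k \geq 0$,
\[
   \E\bigl[\#\{\,y \in \ballo(\rho,R_n) : \deg_G(y) > k\,\}\bigr] \;\leq\; O\!\bigl(R_n^{d_*}(\log R_n)^2\bigr)\cdot \Pr[\deg_G(\rho) > k]\,.
\]
Choosing $k = C'\log n$ with $C'$ large enough (depending on $c$ and $d$) and invoking $\Pr[\deg_G(v) > k] \leq e^{-ck}$ makes the right-hand side $o(1/\log n)$ for $n$ large, so with $(G,\rho)$-probability at least $1 - \tfrac{1}{2\log n}$ every $y \in \ballo(\rho,R_n)$ has $\deg_G(y) \leq C'\log n$, while $\deg_G(\rho) \geq 1$ always. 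On the intersection of the two good events --- of $(G,\rho)$-probability at least $1 - \tfrac{1}{\log n}$ --- substituting $|\ballo(\rho,R_n)| \leq O\!\bigl(n^{d_*/2}(\log n)^{O(1)}\bigr)$ and $\max_{y}\deg_G(y) \leq C'\log n$ into the bound from (ii) gives $p^G_{2n}(\rho,\rho) \geq n^{-d_*/2}(\log n)^{-C}$ for all large $n$ and a suitable constant $C \geq 1$, which is the assertion.

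The only step I expect to pose a real obstacle is \pref{thm:Rd-packings}, which produces $\omega$ and is established separately in \pref{sec:sphere-packings}; the deduction above amounts to bookkeeping of polylogarithmic factors. The two points that merit genuine care are that $\omega$ must be a bona fide \emph{unimodular} normalized conformal metric, so that the stationarity arguments underlying the confinement estimate in (i) are legitimately available, and that its volume bound is an $L^\infty$ (sure) bound rather than one merely in probability --- which is exactly what lets the mass-transport estimate in (iii) control the degrees of all vertices of the random set $\ballo(\rho,R_n)$.
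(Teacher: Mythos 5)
Your proposal matches the paper's own route: the paper derives \pref{thm:heat-kernel} precisely by combining the $L^{d}$-normalized unimodular conformal metric with polylogarithmically-corrected growth from \pref{thm:Rd-packings} with the quenched heat-kernel machinery of \cite[\S 4.2]{Lee17a}, which is exactly your reduction, and your added sketch of that cited mechanism (confinement, the Cauchy--Schwarz return bound, and mass-transport degree control under exponential tails) is consistent with it. The genuinely new ingredient is indeed \pref{thm:Rd-packings}, as you note.
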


\subsection{Gauged conformal growth and $d$-parabolicity}
\label{sec:vel}

\newcommand{\vel}{\mathsf{VEL}}

Consider a locally-finite connected graph $G=(V,E)$.
Let $\Gamma$ denote a collection of simple paths in $G$.
The {\em $\ell_d$-vertex extremal length of $\Gamma$} is defined by
\[
   \vel_d(\Gamma) \seteq \sup_{\omega} \inf_{\gamma \in \Gamma} \frac{\len_{\omega}(\gamma)}{\|\omega\|_{\ell_d(V)}}\,,
\]
where the infimum is over all conformal metrics on $G$, and
$\|\omega\|_{\ell_d(V)} \seteq \left(\sum_{v \in V} \omega(v)^d\right)^{1/d}$.

Fix a vertex $v_0 \in V$
and let $\Gamma_G(v_0)$ denote the set of infinite simple paths in $G$ emenating from $v_0$.
One says that $G$ is {\em $d$-parabolic} if $\vel_d(\Gamma_G(v_0))=\infty$ (see \cite{HS95,BS13}).  One can
check that this definition does not depend on the choice of $v_0 \in V$.

There are unimodular random graphs $(G,\rho)$ where $G$ is almost surely $d$-parabolic, but
$\dimconfunder^d(G,\rho) \geq \dimconfunder(G,\rho) = \infty$, and
other examples where $\dimconf(G,\rho)=d \geq 2$ but $G$ is almost surely not $d$-parabolic; see \pref{sec:parabolic}.

\medskip
\noindent
{\bf Gauged growth.}
On the other hand, there is a common strengthening of the conditions.
Say that $(G,\rho)$ has {\em $(C,R,d)$-growth} if
there is an $L^d$-normalized conformal metric $\omega : V(G)\to \R_+$ such that
\begin{equation}\label{eq:onescale-intro}
   \|\# B_{\omega}(\rho, R)\|_{L^{\infty}} \leq C R^d\,.
\end{equation}
Say that $(G,\rho)$ has {\em gauged $d$-dimensional conformal growth} if
there is a constant $C \geq 1$ such that $(G,\rho)$ has $(C,R,d)$-growth for all $R \geq 0$.
A sequence $\{(G_n,\rho_n)\}$ has {\em uniform gauged $d$-dimensional conformal growth}
if there is a constant $C \geq 1$ such that $(G_n,\rho_n)$ has $(C,R,d)$-growth for all $R \geq 0$ and $n \geq 1$.

It is straightforward to see that if $(G,\rho)$ has gauged $d$-dimensional growth, then
$\dimconfover^d(G,\rho) \leq d$:
For each $k \geq 1$, let $\omega_k$ denote an $L^d$-normalized conformal metric on $(G,\rho)$
satisfying \eqref{eq:onescale-intro} and define
\[
   \hat{\omega} \seteq \left(\frac{6}{\pi^2} \sum_{k \geq 1} \frac{\omega_{k}^d}{k^2}\right)^{1/d}.
\]
(By unimodularity of the triple $(G,\hat{\omega},\rho)$, it holds that almost surely $\sup_{x \in V(G)} \hat{\omega}(x) < \infty$;
see \pref{sec:unimodular}).

Establishing $d$-parabolicity is somewhat more involved;
the $d=2$ case of the following theorem is 
\cite[Thm. 2.1]{Lee17a}.  The general case is proved in \pref{sec:gauged-extremal}.

\begin{theorem}\label{thm:scaled-strong}
   For every $d \geq 1$, the following holds.
   If $(G,\rho)$ is a unimodular random graph such that $\deg_G(\rho)$ is essentially bounded
   and $(G,\rho)$ has gauged $d$-dimensional conformal growth, then $G$ is almost surely $d$-parabolic.
\end{theorem}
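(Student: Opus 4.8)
The plan is to prove that, almost surely, $\vel_d(\Gamma_G(\rho)) = \infty$; since $d$‑parabolicity of $G$ does not depend on the choice of base vertex, this is exactly the assertion of the theorem. By definition it suffices to produce, for $L$ running over a sequence tending to infinity, a random finitely supported conformal metric $\eta = \eta_L$ on $G$ with $\inf_{\gamma \in \Gamma_G(\rho)}\len_{\eta_L}(\gamma) \ge L\,\|\eta_L\|_{\ell_d(V)}$, in such a way that the probability of failure is summable in $L$; then Borel--Cantelli finishes. The main case is $d>2$; the borderline $d=2$ is \cite[Thm.~2.1]{Lee17a}, which is where the argument below is tightest.

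First I would manufacture from the gauged growth hypothesis a single unimodular conformal metric with $d$‑dimensional volume growth. For each $m$, use the hypothesis to pick, for $1 \le k \le m$, an $L^d$‑normalized unimodular conformal metric $\omega_k$ on $(G,\rho)$ witnessing $(C, 2^{k}m^{1/d}, d)$‑growth, coupled so that $(G,(\omega_k)_k,\rho)$ is unimodular, and set $\hat\omega_m := \bigl(\tfrac1m\sum_{k=1}^m \omega_k^{d}\bigr)^{1/d}$. Then $\hat\omega_m$ is $L^d$‑normalized, and since $\hat\omega_m \ge m^{-1/d}\omega_k$ one has $B_{\hat\omega_m}(\rho, 2^k) \subseteq B_{\omega_k}(\rho, 2^{k}m^{1/d})$, whence $\# B_{\hat\omega_m}(\rho, r) \le C' r^{d} m$ almost surely for all $1 \le r \le 2^{m}$, and, by unimodularity, simultaneously at every vertex (this is essentially the computation behind $\dimconfover^{d}(G,\rho)\le d$). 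The mass transport principle then converts this into control of $\ell_d$‑masses: for every $r$, $\E\bigl[\sum_{v\in B_{\hat\omega_m}(\rho,r)}\hat\omega_m(v)^{d}\bigr] = \E\bigl[\hat\omega_m(\rho)^{d}\,\#B_{\hat\omega_m}(\rho,r)\bigr] \le C' r^{d} m$, and $\#\{v\in B_{\hat\omega_m}(\rho,r):\hat\omega_m(v)>s\}$ has expectation at most $C' r^{d}m/s^{d}$, so heavy vertices are sparse.

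The metric $\eta_m$ (with $L$ corresponding to $m$) is then a weighted sum of separators coming from $\hat\omega_m$‑balls. For an integer $r$, let $S_r$ be the inner vertex boundary $\{u\in B_{\hat\omega_m}(\rho, r): u \text{ has a neighbor outside } B_{\hat\omega_m}(\rho, r)\}$: any infinite path from $\rho$ has only finitely many vertices in the finite set $B_{\hat\omega_m}(\rho, r)$, and its last vertex there lies in $S_r$, so $S_r$ separates $\rho$ from infinity. Put $\eta_m := \sum_{r=1}^{2^{m}} \tfrac1r\, \mathbf 1_{S_r}$. Since every infinite path from $\rho$ contains, for each $r$, a vertex of $S_r$ carrying $\eta_m$‑weight at least $\tfrac1r$, a bookkeeping of endpoint corrections gives $\inf_{\gamma}\len_{\eta_m}(\gamma) \ge \tfrac12\sum_{r=1}^{2^m}\tfrac1r = \Omega(m)$ on the full‑probability event that the balls are finite. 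For the denominator, note that $v\in S_r$ precisely when $\dist_{\hat\omega_m}(\rho,v)\le r$ and some neighbor of $v$ lies at distance $>r$; hence $v$ belongs to $S_r$ only for $r$ in the interval between $\dist_{\hat\omega_m}(\rho,v)$ and the largest distance $D_v^{+}$ of a neighbor of $v$, an interval of length at most $W_v := \max_{v'\in\{v\}\cup N(v)}\hat\omega_m(v')$. Thus $\eta_m(v) = O\!\bigl(\log(1 + W_v/\dist_{\hat\omega_m}(\rho,v)) + 1/\dist_{\hat\omega_m}(\rho,v)\bigr)$, and a dyadic decomposition by distance‑ and weight‑scales, fed by the mass transport bounds above together with $\hat\omega_m(v)\le 2\dist_{\hat\omega_m}(\rho,v)$ and sparsity of heavy vertices, yields $\E\|\eta_m\|_{\ell_d(V)}^{d} = O(m^{2}\,\polylog\, m)$. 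By Markov, $\Pr\bigl[\|\eta_m\|_{\ell_d(V)}^{d} > (c m/L)^{d}\bigr] = O\bigl(L^{d}\,\polylog(m)/m^{d-2}\bigr)$; choosing $m = m(L)$ to grow fast enough — this is where $d>2$ enters — makes this summable in $L$, and Borel--Cantelli gives that almost surely, for all large $L$, the metric $\eta_{m(L)}$ certifies $\vel_d(\Gamma_G(\rho)) \ge L$, hence $\vel_d(\Gamma_G(\rho)) = \infty$ almost surely.

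The step I expect to be the main obstacle is the bound $\E\|\eta_m\|_{\ell_d(V)}^{d} = O(m^{2}\,\polylog\, m)$. The metrics produced by gauged growth are geometrically uncontrolled — nothing bounds individual conformal weights or the ratio of weights of adjacent vertices — so a single heavy vertex near the root can lie on, and inflate, many of the separators $S_r$ at once; forcing the total $\ell_d$‑mass down to essentially the Hölder‑optimal $m^{2}$ requires counting, for every vertex, the exact range of radii for which it lies on a separator, and then summing over distance‑ and weight‑scales using the sparsity of heavy vertices (or, as a fallback, capping $\hat\omega_m$ at scale $2^m$ and re‑deriving a slightly weaker polynomial volume bound for the capped metric before choosing radii). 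This is exactly where essential boundedness of $\deg_G(\rho)$ and the full strength of gauged growth — a metric at every scale — are used, and in the borderline dimension $d=2$, where the margin $m^{d-2}$ collapses, this estimate must be carried out with substantially more care; that is the content of \cite[Thm.~2.1]{Lee17a}.
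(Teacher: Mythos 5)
Your proposal is sound in outline but takes a genuinely different route from the paper. The paper never merges the scales: it regularizes each metric $\omega_{r_j}$ via \cite[Lem.~2.6]{Lee17a} (weights bounded below, bounded ratio across edges), uses the resulting comparison between $\omega$-balls and graph balls to build $n$ pairwise disjoint annuli $A_j(\rho)$ at a rapidly growing sequence of radii $r_j$, and takes the certificate $\omega_{(\rho)}=\bigl(\sum_j r_j^{-d}\,\omega_{r_j}^d\,\1_{A_j(\rho)}\bigr)^{1/d}$: after the $r_j^{-1}$ rescaling each annulus costs $\Omega(1)$ to cross but contributes expected $\ell_d$-mass $O(1)$ by one Mass-Transport application per scale, so the ratio is polynomially large in $n$ with probability $1-O(n^{-1/2})$, and in particular the argument covers $d=2$ directly. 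You instead average the $m$ metrics into a single $\hat\omega_m$, paying a factor $m$ in the volume bound, and then run the classical boundary-separator certificate $\eta_m=\sum_{r\le 2^m}\tfrac1r\,\1_{S_r}$; that factor $m$ is exactly why your expected mass is $m^2$ rather than $m$, why your ratio $m^{1-2/d}$ degenerates at $d=2$, and why you must outsource that case to \cite[Thm.~2.1]{Lee17a} (legitimate, since the paper itself identifies its $d=2$ case with that theorem). What your route buys is independence from the regularization lemma and the ball-comparison machinery; what the paper's route buys is the critical dimension and a sharper certificate. (Neither your argument nor the paper's final display honestly treats $1\le d<2$, so I do not count that against you.)

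Two points need attention in a write-up. First, you assert the $\omega_k$ can be coupled so that $(G,(\omega_k)_k,\rho)$ is jointly unimodular; this is neither obvious nor needed. Since $\hat\omega_m^d=\tfrac1m\sum_k\omega_k^d$ and $\dist_{\hat\omega_m}\ge m^{-1/d}\dist_{\omega_k}$, one has $B_{\hat\omega_m}(x,r)\subseteq B_{\omega_k}(x,rm^{1/d})$, so the almost-sure every-vertex ball bound and all the expected-mass bounds (including the one for the graph $1$-neighborhood of a ball, which is what your $W_v$ bookkeeping actually requires) follow by applying the Mass-Transport Principle to each unimodular triple $(G,\omega_k,\rho)$ separately under an arbitrary measurable coupling --- which is also all the paper ever uses. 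Second, the key estimate $\E\|\eta_m\|_{\ell_d}^d=O(m^2\,\polylog m)$ is only sketched, but it does close, with bounded degree entering exactly where you say: at distance scale $2^j$ the vertices with $W_v\le 2^j$ contribute in expectation $\lesssim 2^{-jd}\,\E\sum W_v^d\lesssim (1+\dmax)^2\,m$, while the heavy vertices also contribute $\lesssim m$, because the expected number with $W_v\in[2^{i+j},2^{i+j+1})$ is $\lesssim 2^{-id}m$ by the neighborhood mass bound, each contributes only $\lesssim(1+i)^d$ (capped at $(1+m-j)^d$), and $\sum_{i\ge0}(1+i)^d2^{-id}<\infty$; summing over the $m+1$ scales gives $O(m^2)$, even without the polylogarithm. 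With $m(L)\approx L^{2d/(d-2)}$ the failure probability is summable (in fact, probability tending to $0$ already suffices, since $\{\vel_d=\infty\}$ is the decreasing limit of $\{\vel_d\ge cL\}$), completing the proof for $d>2$.
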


In order to establish \pref{thm:Rd-packings}, we prove the following stronger statement
in \pref{sec:sphere-packings}.

\begin{theorem}\label{thm:intro-scaled}
   For any $d\geq 1$, the following holds.
   If $(G,\rho)$ is the distributional limit of finite graphs that are
   uniformly quasi-packed in an Ahlfors $d$-regular metric measure space,
   then $(G,\rho)$ has gauged $\max(d,2)$-dimensional conformal growth.
\end{theorem}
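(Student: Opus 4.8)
The plan is to prove Theorem~\ref{thm:intro-scaled} — the ``gauged'' (single-scale, no log factor) version of Theorem~\ref{thm:Rd-packings} — and then obtain Theorem~\ref{thm:Rd-packings} by the summation trick already sketched after its statement. So the entire content is: given finite graphs $\{G_n\}$ uniformly $(\tau,M)$-quasi-packed in an Ahlfors $d$-regular space $\cX$, with distributional limit $(G,\rho)$, and a fixed scale $R \geq 1$, produce an $L^{d_*}$-normalized unimodular conformal metric $\omega$ with $\|\#B_\omega(\rho,R)\|_{L^\infty} \leq C R^{d_*}$, where $d_* = \max(d,2)$ and $C$ depends only on $d,M,\tau,\cX$. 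By a standard weak-limit/tightness argument for unimodular measures, it suffices to do this at the level of a \emph{finite} graph $G = G_n$ with $\rho$ uniformly random, i.e.\ to prove a finite statement of the shape of Theorem~\ref{thm:finite-sphere-packing-intro-0} but at a single scale and with the averaged (not sup) normalization $\frac{1}{|V|}\sum_x \omega(x)^{d_*} \leq 1$, controlling $\max_x |B_\omega(x,R)| \leq C R^{d_*}$; then pass to the limit. The key technical input is that distributional convergence forces the accumulation points of the associated quasi-packings to be ``almost surely separated'' in the sense of the Benjamini--Schramm / Benjamini--Curien lemma, which is what lets a single reweighting work uniformly in $n$.

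\textbf{Construction of $\omega$ at a fixed scale.} Given $G$ quasi-packed via $\Phi : V(G) \to \cB_\tau$, to each vertex $v$ associate the radius $r(v) \approx \diam(\Phi(v))$ and the center $x(v) \in \cX$. The naive choice $\omega(v) = r(v)$ already makes balls in the packing metric roughly Euclidean balls, but it need not be normalized and need not kill the accumulation-point problem: many tiny spheres can pile up near a single point of $\cX$, inflating both the volume and $\sum \omega^{d_*}$. The fix — the ``multi-scale reweighting of the spheres'' promised in \pref{sec:sphere-packings} — is to truncate from below at a scale tied to $R$: roughly, set $\omega(v) = \max\{r(v),\, \delta\}$ for a suitable $\delta = \delta(R,n)$, or more precisely cap the influence of any region of $\cX$ where packing elements accumulate. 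Because $\cX$ is Ahlfors $d$-regular, a $\delta$-net argument bounds, for any radius-$\rho$ ball $B \subseteq \cX$, the number of $v$ with $\Phi(v) \cap B \neq \emptyset$ and $r(v) \gtrsim \delta$ by $O((\rho/\delta + 1)^d)$ via quasi-multiplicity \eqref{eq:bdd-mult} applied at scale $\delta$; combined with quasi-tangency \eqref{eq:coarse-tan}, a path of $\omega$-length $\leq R$ in $G$ stays inside a ball of radius $O(R)$ in $\cX$, so $|B_\omega(v,R)| \lesssim (R/\delta)^d$ once $r \gtrsim \delta$ along the whole ball, and the ``small'' vertices are handled separately by the degree bound and the fact that short $\omega$-edges correspond to actual proximity in $\cX$. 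Choosing $\delta$ so that $(R/\delta)^d = R^{d_*}$, i.e.\ $\delta \asymp R^{1-d_*/d}$, gives the target $R^{d_*}$; one then checks the averaged normalization $\frac{1}{|V|}\sum_v \omega(v)^{d_*} = O(1)$ using that $\sum_v r(v)^d \lesssim \mu(\text{ambient region})$ (Ahlfors regularity plus bounded multiplicity) while the truncation contributes $\leq \delta^{d_*} \leq 1$ per vertex. Here $d_* = \max(d,2)$ enters because when $d < 2$ the exponent in the volume bound can only be pushed down to $2$, not to $d$ — exactly the caveat remarked after Theorem~\ref{thm:Rd-packings}.

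\textbf{Unimodularity and passing to the limit.} The metric above is a deterministic function of the (rooted) graph-plus-packing data, hence its law on $(G_n,\rho_n)$ is automatically a probability measure on marked networks that is invariant under re-rooting by a uniform vertex; taking a weak limit along a subsequence yields a unimodular marked network $(G,\omega,\rho)$ in the sense of \cite{aldous-lyons}, with the bound $\|\#B_\omega(\rho,R)\|_{L^\infty} \leq C R^{d_*}$ preserved because $\#B_\omega(\cdot,R)$ is a local (radius-$O(R)$) functional and $C$ is uniform in $n$ — this is the step where the uniform separation of accumulation points is essential, as it is what guarantees the \emph{same} $C$ works for every $n$ (a diverging $C$ would survive into an $L^\infty$ bound only after dividing by something, which we are not doing). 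Finally $\|\omega\|_{L^{d_*}} = (\E\,\omega(\rho)^{d_*})^{1/d_*} = (\lim_n \frac{1}{|V(G_n)|}\sum_v \omega(v)^{d_*})^{1/d_*} = O(1)$, and rescaling $\omega$ by this constant makes it $L^{d_*}$-normalized while only improving the volume bound. This gives $(C,R,d_*)$-growth; since $C$ and the construction did not depend on $R$ except through $\delta$, and the construction works for every $R$, we conclude $(G,\rho)$ has gauged $d_*$-dimensional conformal growth, which is Theorem~\ref{thm:intro-scaled}.

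\textbf{Main obstacle.} I expect the crux to be the quantitative separation of accumulation points: turning the qualitative Benjamini--Schramm/Benjamini--Curien statement (``at most one accumulation point almost surely'') into a uniform-in-$n$ \emph{rate} that controls how much packing mass can concentrate near a single point of $\cX$ at scale $\delta$, which is precisely what pins down an admissible $\delta(R,n)$ and keeps $C$ from blowing up. Everything else — the Ahlfors-regularity net counting, the translation between $\omega$-paths and $\cX$-proximity, the weak-limit/unimodularity bookkeeping — is routine once that rate is in hand.
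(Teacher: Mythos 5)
There is a genuine gap, and it is in the heart of the argument: the single\-/cutoff truncation $\omega(v)=\max\{r(v),\delta\}$ cannot prove the finite statement you need (the analogue of \pref{thm:finite-sphere-packing}), whose constant must depend only on $d,\tau,M$ and the regularity constants of $\cX$, uniformly over \emph{all} finite quasi-packed graphs and all $R$. Inside any region where packing bodies accumulate, all vertices have radius below the cutoff and therefore receive the \emph{same} weight, so there your metric is just a rescaled uniform metric. Take the paper's own \pref{fig:hyperbolic}: let $G$ be the ball of radius $h$ in a bounded-degree hyperbolic triangulation, circle-packed in the unit disk, so the radii in the outer annuli are exponentially small and graph balls grow exponentially around every vertex. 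For any admissible cutoff, all vertices of the outer half get one common weight $W$; an $\omega$-ball of radius $R$ centered there contains a graph ball of radius $\asymp R/W$, which has $\geq e^{c R/W}$ vertices, so keeping $|B_{\omega}(x,R)|\leq CR^{2}$ forces $W\gtrsim R/\log R$. But then $\frac1n\sum_x\omega(x)^{2}\gtrsim (R/\log R)^{2}$, which is incompatible with the normalization once $R$ is large (for $h\gg R$ most vertices lie in the outer half), and rescaling $\omega$ to restore the normalization shrinks all distances and only enlarges the $R$-ball. Your proposed rescue for the ``small'' vertices also assumes a degree bound, which \pref{thm:intro-scaled} does not grant, and quasi-multiplicity \eqref{eq:bdd-mult} gives no control whatsoever on the number of bodies of diameter $\ll r$ inside a ball of radius $r$ --- which is exactly where the difficulty lives. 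What is needed is a weight that inflates accumulating clusters geometrically along every descent into them, simultaneously at all locations and all scales; no single $\delta(R,n)$ can do this.

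Relatedly, the obstacle you flag as the crux --- a quantitative, uniform-in-$n$ separation of accumulation points extracted from the Benjamini--Schramm/Benjamini--Curien lemma --- is neither what the paper uses nor available in the form you would need (finite balls of hyperbolic triangulations have arbitrarily dense near-accumulations, yet the theorem must cover them). The paper's \pref{thm:finite-sphere-packing} is a purely deterministic statement about a single finite quasi-packed graph, with no distributional-limit input: one fixes $\Delta$-adic hierarchical cube systems (\pref{thm:ensemble}), assigns each cube a \emph{level} recording the size of the cluster of packing points it resolves, proves by a flow argument up the hierarchy (\pref{lem:BSmagic}, the quantitative avatar of the isolation lemma) that level-$j$ cubes number at most $O(s|V|/2^{j})$, and defines $\omega$ by summing over \emph{all} cubes at \emph{all} scales the factors $\approx 2^{\levelt/d_*}(1+k-\levelt)^{-2/d_*}\min\{\Delta^{-n},1/\omega_0(v)\}$ applied to the base weight $\omega_0(v)=\mu(S_v)^{1/d}$; the rarity of high-level cubes keeps $\frac{1}{|V|}\sum\omega^{d_*}=O(1)$, and the path argument through nested cubes (\pref{lem:elf1}, \pref{lem:elf2}, via \pref{cor:helper5}) shows every set of $2^{k}$ vertices has $\omega$-diameter $\gtrsim 2^{k/d_*}$. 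Your outer shell --- reduce to a finite single-scale statement and pass to a unimodular weak limit --- does match the paper's \pref{thm:QCG-limits} and \pref{lem:weak2}, but without a multi-scale construction of this kind the proposal has no proof of the finite statement, and the route you sketch for it fails.
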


Note that for the special case of planar graphs,
the conjunction of \pref{thm:scaled-strong} and \pref{thm:intro-scaled}
recovers the Benjamini-Schramm recurrence theorem \cite{BS01}
(that every distributional limit of finite planar graphs
with uniformly bounded degrees is almost surely $2$-parabolic).

\subsection{The spectral measure of $d$-dimensional graphs}

In order to obtain estimates like \pref{thm:intro-dimspec1} and \pref{thm:heat-kernel},
it is clear that one needs to control the moments of the spectral measure at the root.
Indeed, if $(G,\rho)$ is a random rooted graph,
then one can define the spectral measure $\mu \seteq \E[\mu_G^{\rho}]$, there $\mu_G^{v}$
is the unique probability measure on $\R$ such that for all integers $T \geq 1$:
\[
   \deg_G(v) \int \theta^T \,d\mu_G^v(\theta) = \langle \1_v, P_G^T \1_v\rangle_{\ell^2(G)}\,.
\]
Here, $P_G$ is the random walk operator on $G$ and $\ell^2(G)$ is the Hilbert space
of functions $f : V(G) \to \R$ with $\langle f,g\rangle_{\ell^2(G)} \seteq \sum_{x \in V(G)} \deg_G(x) f(x) g(x)$.
(See, e.g., \cite[\S 4.4.1]{Lee17a} and \cite[\S 1.4--1.5]{BSV17}.)
Note that $\mu$ is almost surely supported on $[-1,1]$.

In this formulation, one has:  For all integers $T \geq 1$,
\[
   \E\left[p^G_{2T}(\rho,\rho)\right] = \int \theta^{2T} d\mu(\theta)\,,
\]
hence an
elementary calculation shows that for every $d \geq 1$ and $T \geq 1$:
\[
   \frac14 \mu\!\left([1-\tfrac{1}{2T}, 1]\right) \leq
   \E\left[p^G_{2T}(\rho,\rho)\right] \leq T^{-d} + \mu\!\left(\left[1-\tfrac{d \log T}{2 T},1\right]\right)\,.
\]
Almost sure (quenched) lower bounds on $p^G_{2T}$ as in \pref{thm:intro-dimspec1}
are substantially more difficult to establish than lower bounds on $\E[p^G_{2T}(\rho,\rho)]$,
but annealed estimates are already interesting,
and one can draw a parallel to more classical settings.

\medskip
\noindent
{\bf The Weyl bound in $\R^d$.}
Consider a bounded domain $\Omega \subseteq \R^d$, and let
$\lambda_1 \leq \lambda_2 \leq \cdots$ be the
corresponding Neumann eigenvalues.  Let
$N_{\Omega}(\lambda) \seteq \# \{ k : \lambda_k \leq \lambda \}$
denote the eigenvalue counting function.
In 1912, addressing a conjecture of Lorentz, Weyl determined \cite{Weyl12}
the first-order asymptotics of $N_{\Omega}(\lambda)$ as $\lambda \to \infty$:
\[
   N_{\Omega}(\lambda) \sim c_d \vol(\Omega) \lambda^{d/2}\,,
\]
where $c_d$ is some constant depending only on the dimension.

In addressing a question of S. T. Yau on the spectrum of the Laplacian
on orientable surfaces, Korevaar \cite{Korevaar93} showed that
if $\Omega$ is a subdomain of a complete $d$-dimensional Riemannian manifold $(M, g_0)$
with nonnegative Ricci curvature, and $(M, \varphi g_0)$ is a finite-volume
conformal metric, then
\begin{equation}\label{eq:korevaar}
   N_{\Omega}(\lambda) \geq C_d \vol(\Omega, \varphi g_0) \lambda^{d/2}\,,
\end{equation}
where $C_d$ is a constant depending only on the dimension $d$.

Analogous results can be obtained for distributional limits of finite graphs that are sphere-packed $\R^d$.
Let $\nu$ denote the law of a random rooted graph $(G,\rho)$ and define $\bar{d}_{\nu} : [0,1] \to \R_+$ by
\[
   \bar{d}_{\nu}(\e) \seteq \sup \left\{ \E[\deg_G(\rho) \mid \cE] : \Pr(\cE) \geq \e \right\}\,,
\]
where the supremum is over all measurable sets $\cE$ with $\Pr(\cE) \geq \e$.

\begin{theorem}\label{thm:distributional-weyl}
   Consider $d \geq 1$ and an Ahlfors $d$-regular metric measure space $\cX$.
   Suppose $(G,\rho)$ is a distributional limit of finite graphs
   that are uniformly quasi-packed in $\cX$.
   Then there is a number $c > 0$ such that the following holds.
   Let $\nu$ denote the law of $(G,\rho)$, and let $\mu$ denote the corresponding spectral measure.
   For all $\e > 0$:
   \begin{equation}\label{eq:unimodular-weyl}
      \mu\!\left([1-\e,1]\right) \geq c \frac{(\log (1/\e))^{-2}}{\bar{d}_{\nu}(\e)} \e^{d/2}\,.
   \end{equation}
\end{theorem}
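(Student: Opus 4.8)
The plan is to combine the quenched conformal-growth estimate from \pref{thm:intro-scaled} with an annealed averaging argument that converts polynomial volume growth in a conformal metric into a lower bound on the spectral measure near the top of the spectrum. First I would invoke \pref{thm:intro-scaled} to obtain a unimodular $L^{\max(d,2)}$-normalized conformal metric $\omega$ on $(G,\rho)$ with $\|\#B_\omega(\rho,R)\|_{L^\infty} \leq C R^{d_*}(\log R)^2$ for all $R \geq 1$, where $d_* = \max(d,2)$; since $\mu([1-\e,1])\,\e^{-d/2}$ is only decreased by passing from $d$ to $d_* \geq d$, it suffices to prove the bound with $d$ replaced by $d_*$, and then observe that the claimed inequality with the original $d$ is weaker. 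The heart of the matter is the standard Nash-type / spectral-profile inequality on a finite portion of the weighted graph: a set of bounded weighted volume that is ``fat'' with respect to $\omega$ supports a test function whose Rayleigh quotient is small, which forces the eigenvalue counting function — and hence the spectral measure — to put mass near the edge of the spectrum. Concretely, for a radius $R$ chosen as a function of $\e$, I would use the ball $B_\omega(\rho, R)$ as the support of a Lipschitz bump function $f$ (say $f(x) = (R - \dist_\omega(\rho,x))_+$), whose Dirichlet energy $\sum_{\{u,v\}} \deg$-weighted squared differences is controlled by the edge lengths $\len_\omega$, while its $\ell^2(G)$ mass is controlled from below by the weighted volume of a smaller ball $B_\omega(\rho, R/2)$.

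The key steps, in order, are as follows. Step 1: fix the target scale $R$ by setting $R^{d_*}(\log R)^2 \asymp 1/\e$, so that the volume bound reads $\|\#B_\omega(\rho,R)\|_{L^\infty} \leq C/\e$. Step 2: on a typical realization, use the bump function $f$ above to produce, via the variational characterization, an estimate of the form: the number of eigenvalues $\theta$ of $P_G$ (restricted appropriately, or via the spectral measure $\mu_G^\rho$) with $\theta \geq 1 - \kappa/R^2$ is at least a constant times $\#B_\omega(\rho,R/2)$, where $\kappa$ depends on $\omega$ only through a Cauchy–Schwarz bookkeeping of $\sum \len_\omega(\gamma)$ along geodesics — here one uses that $f$ is $1$-Lipschitz in $\dist_\omega$, so its energy is at most $\sum_{x} \deg_G(x)\,\omega(x)^2$-type quantity, which after normalization is $O(1)$ in expectation. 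Step 3: take expectations over $(G,\rho)$ and use unimodularity (mass transport) to relate $\E[\#B_\omega(\rho,R/2)]$ and the averaged spectral measure $\mu = \E[\mu_G^\rho]$; this is where the factor $\bar d_\nu(\e)$ enters, since translating a lower bound on the $\ell^2(G)$-mass (which carries degree weights) back to the probabilistic spectral measure costs a factor controlled by the conditional expected degree on the relevant event of probability $\gtrsim \e$. Step 4: feed $R^2 \asymp (1/\e)^{2/d_*}(\log(1/\e))^{-4/d_*}$ into $1 - \kappa/R^2$ and simplify, absorbing all polylogarithmic losses into the $(\log(1/\e))^{-2}$ factor; one checks that $\e^{d_*/2}$ (hence a fortiori $\e^{d/2}$, up to constants, since $\e \leq 1$) survives with the stated logarithmic correction.

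The main obstacle I anticipate is Step 3: making the passage from the quenched $\ell^2(G)$-Rayleigh-quotient estimate to the annealed spectral measure $\mu$ fully rigorous, including the correct appearance of $\bar d_\nu(\e)$. The subtlety is that the test function $f$ and its support depend on the random environment, so the ``eigenvalue counting'' must be phrased in terms of $\mu_G^\rho$ via the operator $P_G$ on $\ell^2(G)$ and then averaged; degree weights in the inner product $\langle f,g\rangle_{\ell^2(G)}$ mean that the naive bound involves $\E[\deg_G(\rho)\,\Ind_{\cE}]$ for the event $\cE$ that $\rho$ lies in a suitably fat ball, and bounding this by $\bar d_\nu(\e)\cdot\Pr(\cE)$ is exactly the definition of $\bar d_\nu$ — but one must verify $\Pr(\cE) \gtrsim \e$, which in turn requires a mass-transport / Markov-inequality argument showing that a positive $\e$-fraction of roots sit in balls realizing the worst-case volume growth. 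A secondary technical point is ensuring the bump-function energy bound holds with an absolute constant after $L^{d_*}$-normalization rather than $L^2$-normalization; here Hölder's inequality (using $d_* \geq 2$) and the relation between $\|\omega\|_{L^{d_*}}$ and $\|\omega\|_{L^2}$ on the relevant finite ball handle the discrepancy, but the logarithmic volume factor must be tracked carefully through this step so that it lands precisely as $(\log(1/\e))^{-2}$ in \eqref{eq:unimodular-weyl}.
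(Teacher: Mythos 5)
There is a genuine gap at the heart of your Step 2. A single Lipschitz bump $f=(R-\dist_\omega(\rho,\cdot))_+$ with small Rayleigh quotient $\cR_G(f)\le \kappa/R^2$ does \emph{not} imply that ``the number of eigenvalues $\theta\ge 1-\kappa/R^2$ is at least a constant times $\#B_\omega(\rho,R/2)$'': one test function certifies only one unit of spectral mass (more precisely, $\sum_{x\in\supp f}\deg_G(x)\,\mu_G^x([1-2\kappa/R^2,1])\gtrsim 1$), never a count proportional to the ball's cardinality. To get a count of that order one needs $\Omega(n/K)$ \emph{disjointly supported} low-energy test functions, where $K$ is the maximal $\omega$-ball volume, and producing such a family is exactly the non-trivial ingredient the paper imports as \pref{thm:bumps} (\cite[Thm.~3.13]{Lee17a}). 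The paper's route is: build the finite-graph conformal metric of \pref{thm:finite-sphere-packing}, feed it into \pref{thm:bumps} to get $k\ge n/(16K)$ disjoint bumps and hence the finite Weyl bound \pref{thm:spectral-intro}/\pref{thm:spectral}, and only then pass to the distributional limit (convergence of the expected spectral measures under Benjamini--Schramm convergence, using that $[1-\e,1]$ is closed, with the $\Delta_G(\lambda n)/(\lambda n)$ factor becoming $\bar d_\nu(\e)$). If you insist on working directly on the limit with your single bump plus mass transport, the bookkeeping gives roughly $\mu([1-\e,1])\gtrsim K^{-2}$ rather than $K^{-1}$, i.e.\ you lose an extra factor of order $\e$, so even the exponent in \eqref{eq:unimodular-weyl} is not recovered. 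Relatedly, your heuristic in Step 3 that one must show ``a positive $\e$-fraction of roots sit in balls realizing the worst-case volume growth'' inverts the actual mechanism: the theorem uses the volume \emph{upper} bound $\#B_\omega\le K\asymp 1/\e$ to guarantee many disjoint bumps; no lower bound on ball volumes is needed or available.

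Two smaller points. First, your reduction from $d$ to $d_*=\max(d,2)$ is backwards: for $\e<1$ and $d<2$ one has $\e^{d_*/2}\le\e^{d/2}$, so proving the bound with exponent $d_*/2$ yields a \emph{weaker} statement than \eqref{eq:unimodular-weyl}, not a stronger one; as written your argument would cover only $d\ge 2$. Second, the appearance of $\bar d_\nu(\e)$ in the paper does not come from conditioning on an event of probability $\gtrsim\e$ in the limit object, but from the average-degree factor $\avgd_G(1/K)+\avgd_G(1/R^2)$ in \pref{thm:bumps} on the finite graphs, which survives the limit; your proposed derivation of this factor would have to be rebuilt once the disjoint-bump family is in place.
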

The asymptotic dependence on $\e$ is tight up to the $(\log(1/\e))^{-2}$ factor;
see \pref{rem:tight}.

\medskip
\noindent
{\bf The Laplacian spectrum of finite tangency graphs.}
\pref{thm:distributional-weyl} follows readily from an analogous result for finite graphs.
Let $G=(V,E)$ denote a finite connected graph with $n=|V|$.
Let $\{1-\lambda_k(G) : k=0,1,\ldots,n-1\}$ be
the eigenvalues of the random walk operator on $G$,
where \[0=\lambda_0(G) \leq \lambda_1(G) \leq \cdots \leq \lambda_{n-1}(G)\,.\]
Define the corresponding counting function:
\[
   N_G(\lambda) \seteq \# \{ k > 0 : \lambda_k(G) \leq \lambda \}\,. 
\]
Define also
\[
   \Delta_G(k) \seteq \max_{S \subseteq V : |S| \leq k} \sum_{x \in S} \deg_G(x)\,,
\]
where $\deg_G(x)$ denotes the degree of a vertex $x \in V$.
Note, in particular, that $\Delta_G(1)$ is the maximum degree in $G$.

Denote $\avgd_G(\e) \seteq \frac{\Delta_G(\e n)}{\e n}$.
In \cite{KLPT09}, addressing a conjecture of Spielman and Teng \cite{spielman-teng},
it is shown that there
is a constant $c > 0$ such that if $G$ is a planar graph, then for all $\lambda \in [0,1]$,
\begin{equation}\label{eq:klpt}
   N_G(\lambda) \geq \frac{c}{\Delta_G(1)} \lambda n\,.
\end{equation}
In \cite{Lee17a}, the author improves this bound to
\begin{equation}\label{eq:lee-counting}
   N_G(\lambda) \geq \frac{c}{\avgd_G(\lambda)} \lambda n\,,
\end{equation}
where $c > 0$ is some other constant.

While the utility of this improvement is not immediately apparent
in the finite setting, one should observe that \eqref{eq:klpt}
yields no information for a distributional limit $(G,\rho)$
in which there is no uniform bound on $\deg_G(\rho)$,
whereas \eqref{eq:lee-counting} yields \eqref{eq:unimodular-weyl}
in the case $d=2$ (and without the $\log(1/\e)^{-2}$ correction factor).
Moreover, the correct quantitative dependence is essential
to a spectral argument proving that the uniform infinite
planar triangulation is almost surely recurrent \cite{Lee17a}; this fact was first
established by Gurel-Gurevich and Nachmias \cite{GN13} using
effective resistance estimates.
In \pref{sec:spectral},
we use \pref{thm:finite-sphere-packing-intro-0} to establish
an analogous lower bound to \eqref{eq:korevaar}
for graphs sphere-packed in $\R^d$ (and their generalizations).

\begin{theorem}[Weyl bound for quasi-packed finite graphs]\label{thm:spectral-intro}
   For every $d,\tau,M \geq 1$ and every Ahlfors $d$-regular metric measure space $\cX$,
   there is a number $c > 0$ such that the following holds.
   If $G$ is an $n$-vertex graph that is $(\tau,M)$-quasi-packed in $\cX$, then for all $\lambda \in [0,1]$,
   \[
      N_{G}(\lambda) \geq \frac{c}{\avgd_G(\lambda)} \left(\log \frac{e}{\lambda}\right)^{-2} n \lambda^{d/2}\,.
   \]
\end{theorem}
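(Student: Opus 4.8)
The plan is to prove the lower bound on $N_G(\lambda)$ through the variational (Courant--Fischer) characterization of the spectrum of the normalized Laplacian $I-P_G$: it suffices to exhibit, for each fixed $\lambda\in(0,1]$, a family $f_1,\dots,f_N$ of functions on $V$ whose supports are pairwise non-adjacent in $G$ and each of which has Rayleigh quotient $\sum_{\{y,z\}\in E}(f_i(y)-f_i(z))^2 / \langle f_i,f_i\rangle_{\ell^2(G)}\le\lambda$. Non-adjacency makes both the Dirichlet energy and the $\ell^2(G)$-norm additive over the $f_i$, so their span is an $N$-dimensional subspace on which the Rayleigh quotient of $I-P_G$ is $\le\lambda$, whence $\lambda_{N-1}(G)\le\lambda$ and $N_G(\lambda)\ge N-1$. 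The conformal metric supplied by \pref{thm:finite-sphere-packing-intro-0} is the device that organizes the construction of this family: apply it to obtain $\omega:V\to\R_+$ with $\sum_{x\in V}\omega(x)^d=n$ and $|B_\omega(x,R)|\le CR^d(\log R)^2$ for all $R\ge1$, where $C=C(d,\tau,M,\cX)$. This is the $d$-dimensional analogue of the role played by the circle packing in the planar bound \eqref{eq:lee-counting} of \cite{Lee17a}, and morally of the conformal metric in Korevaar's inequality \eqref{eq:korevaar}.

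Fix a scale $r\asymp\lambda^{-1/2}$, to be calibrated at the end. Run a greedy packing in the $\omega$-metric: select a maximal $5r$-separated set $x_1,\dots,x_N\in V$ with respect to $\disto$. Then the balls $B_\omega(x_i,2r)$ are disjoint, and --- using the quantitative comparison between $\omega$-distance and adjacency built into the construction in \pref{sec:sphere-packings} (adjacent vertices are $\omega$-close up to a controlled multiplicative distortion) --- no edge of $G$ joins $B_\omega(x_i,r)$ to $B_\omega(x_j,r)$ for $i\neq j$. By maximality the balls $B_\omega(x_i,5r)$ cover $V$, so $n\le\sum_i|B_\omega(x_i,5r)|\le N\cdot C(5r)^d(\log5r)^2$, giving $N\gtrsim n\,\lambda^{d/2}(\log(e/\lambda))^{-2}$. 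This produces the geometric part of the bound; the normalization $\lambda n/\Delta_G(\lambda n)$ has yet to enter.

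On each surviving ball take the $\omega$-Lipschitz bump $f_i(y)=(1-r^{-1}\disto(x_i,y))_+$, supported on $\overline{B_\omega(x_i,r)}$. For an edge $\{y,z\}$ meeting the support, $|f_i(y)-f_i(z)|\le r^{-1}\len_\omega(\{y,z\})=\tfrac{1}{2r}(\omega(y)+\omega(z))$, so the Dirichlet energy of $f_i$ is $\lesssim r^{-2}\sum_{y\in B_\omega(x_i,r')}\deg_G(y)\,\omega(y)^2$ for a slightly enlarged radius $r'$, while $\langle f_i,f_i\rangle_{\ell^2(G)}\gtrsim\sum_{y\in B_\omega(x_i,r/2)}\deg_G(y)$. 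For the Rayleigh quotient to drop below $\lambda$ one needs the $\deg_G$-weighted average of $\omega^2$ over the ball to be $\lesssim\lambda r^2\asymp1$. This is the crux, and the step I expect to be the main obstacle: \pref{thm:finite-sphere-packing-intro-0} controls the \emph{counting} measure, whereas the spectral problem is governed by the \emph{degree-weighted} measure. Following the $d=2$ template behind \eqref{eq:lee-counting} in \cite{Lee17a}, I would (i) set aside the $O(\lambda n)$ highest-degree vertices, which together carry total degree at most $\Delta_G(\lambda n)$, discarding the balls that absorb too much of this mass; (ii) excise, by a Chebyshev estimate against $\sum_x\omega(x)^d=n$, the vertices where $\omega$ is atypically large for the scale $r$ (this also bounds $r'$ in terms of $r$ and keeps adjacency under control); and (iii) apply a dyadic pigeonhole over the remaining balls to retain a definite fraction of them satisfying the required ratio bound. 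Each step costs only a constant, except that (i) introduces the factor $\lambda n/\Delta_G(\lambda n)$, exactly the normalization in the theorem. Some care is also needed because an edge may join a small-$\omega$ vertex to a large-$\omega$ one, so $\disto$ does not perfectly track combinatorial adjacency; the bounded distortion of the construction in \pref{sec:sphere-packings} is what makes the selected regions genuinely non-adjacent.

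Putting it together: after recalibrating $r$, the surviving family has size $N'\gtrsim\frac{\lambda n}{\Delta_G(\lambda n)}\,n\,\lambda^{d/2}(\log(e/\lambda))^{-2}$, each $f_i$ has Rayleigh quotient $\le\lambda$, and the supports are pairwise non-adjacent; hence $N_G(\lambda)\ge N'-1$, which is the asserted bound after adjusting $c$. The heart of the argument is thus the degree bookkeeping --- extracting the sharp $\Delta_G(\lambda n)$ normalization rather than a lossy $\Delta_G(1)$ --- together with managing the interaction between the possibly wildly varying conformal factor $\omega$ and the combinatorial adjacency of $G$ when translating the counting-measure volume bound of \pref{thm:finite-sphere-packing-intro-0} into a degree-weighted spectral estimate.
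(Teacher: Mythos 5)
Your overall architecture is the same as the paper's: produce an $\ell^d$-normalized conformal metric with $d$-dimensional volume growth from the uniformization theorem, build disjointly supported test functions on well-separated $\omega$-balls, and conclude by the variational characterization. But the paper does not reprove the test-function construction: it invokes \pref{thm:bumps}, i.e.\ Theorem~3.13 of \cite{Lee17a}, together with the elementary fact \cite[Cor.~3.1]{Lee17a}, and the only new work is the short proposition in \pref{sec:spectral}, which feeds the scale-by-scale metrics of \pref{thm:finite-sphere-packing} into \pref{thm:bumps} with $K=cR^d$ and $R=(n/16ck)^{1/d}$, using H\"older's inequality to pass from the $\ell^d$ to the $\ell^2$ normalization (the $(\log)^2$ factor comes from \pref{thm:bumps}, not from the metric). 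What you yourself identify as ``the crux'' --- converting the counting-measure growth bound into a \emph{degree-weighted} Rayleigh-quotient bound carrying the sharp $\Delta_G(\lambda n)$ normalization --- is exactly the content of that imported theorem, and in your write-up it remains an unexecuted plan (``I would (i)\dots(iii)''), with the assertion that ``each step costs only a constant'' bearing the whole burden. As it stands this is a genuine gap: nothing in the sketch bounds the $\deg_G$-weighted average of $\omega^2$ over the retained balls; in particular a Chebyshev bound against $\sum_{x}\omega(x)^d=n$ controls only the \emph{number} of large-$\omega$ vertices, not their degree mass, which is precisely why the quantities $\avgd_G(1/K)+\avgd_G(1/R^2)$ appear in \pref{thm:bumps} and why the $\Delta_G(\lambda n)$ factor is nontrivial to extract.

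A secondary inaccuracy: you justify non-adjacency of the supports by a ``controlled multiplicative distortion'' of $\omega$ across edges ``built into the construction in \pref{sec:sphere-packings}.'' No such edge-comparability is established there --- the quasi-packing allows arbitrary ratios between adjacent bodies, and the regularization that would provide it (\cite[Lem.~2.6]{Lee17a}, used in the proof of \pref{thm:scaled-strong}) requires a uniform degree bound, which \pref{thm:spectral-intro} does not assume. The point is repairable without it: any $y\neq x_i$ with $\dist_\omega(x_i,y)\le r$ automatically satisfies $\omega(y)\le 2r$, so $5r$-separated centers give supports that can only be adjacent through the centers themselves, and in any case disjointly supported functions suffice at the cost of a factor $2$ in the Rayleigh quotient --- but it should not be attributed to the construction of \pref{sec:sphere-packings}. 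The clean way to complete your argument is the paper's: cite \pref{thm:bumps} rather than resketch its proof.
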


\begin{remark}\label{rem:tight}
Up to the factor of $(\log (e/\lambda))^2$,
this bound is tight for a $d$-dimensional box $\{1,2,\ldots,n^{1/d}\}^d$ considered
as a subgraph of the integer lattice $\Z^d$.
Whether the $(\log (1/\lambda))^2$ factor can be removed from the
bound is an interesting open question.
\end{remark}

\subsection{Preliminaries}

We use the notations $\R_+ = [0,\infty)$ and $\Z_+ = \Z \cap \R_+$.

All graphs appearing in this paper are undirected and locally finite
and without loops or multiple edges.
If $G$ is such a graph, we use $V(G)$ and $E(G)$ to denote the
vertex and edge set of $G$, respectively.
If $S \subseteq V(G)$, we use $G[S]$ for the induced subgraph on $S$.
For $A,B \subseteq V(G)$, we write $E_G(A,B)$ for the set of edges
with one endpoint in $A$ and the other in $B$.
We write $\dist_G$ for the unweighted path metric on $V(G)$, and
$B_G(x,r) = \{ y \in V(G) : \dist_G(x,y) \leq r \}$
to denote the closed $r$-ball around $x \in V(G)$.
Also let $\deg_G(x)$ denote the degree of a vertex $x \in V(G)$, and $\dmax(G) = \sup_{x \in V(G)} \deg_G(x)$.

Write $G_1 \cong G_2$ to denote that $G_1$ and $G_2$ are isomorphic as graphs.
If $(G_1,\rho_1)$ and $(G_2,\rho_2)$ are rooted graphs, we write $(G_1,\rho_1) \cong_{\rho}
   (G_2,\rho_2)$ to denote the existence of a rooted isomorphism.

\subsection{Unimodular random graphs and distributional limits}
\label{sec:unimodular}

We begin with a discussion of unimodular random graphs and distributional limits.
One may consult the extensive reference of Aldous and Lyons \cite{aldous-lyons}.
The paper \cite{BS01} 
offers a concise introduction to distributional limits of finite planar graphs.
We briefly review some relevant points.

Let $\graphs$ denote the set of isomorphism classes of connected, locally finite graphs;
let $\rgraphs$ denote the set of {\em rooted} isomorphism classes of {\em rooted}, connected,
locally finite graphs.
Define a metric on $\rgraphs$ as follows:  $\dloc\left((G_1,\rho_1), (G_2,\rho_2)\right) = 1/(1+\alpha)$,
where
\[
   \alpha = \sup \left\{ r > 0 : B_{G_1}(\rho_1, r) \cong_{\rho} B_{G_2}(\rho_2, r) \right\}\,.
\]
$(\rgraphs, \dloc)$ is a separable, complete metric space. For probability measures $\{\mu_n\}, \mu$ on
$\rgraphs$, 
write $\{\mu_n\} \Rightarrow \mu$ when $\mu_n$ converges weakly to $\mu$ with respect to $\dloc$.
If $\{(G_n,\rho_n)\} \todl (G,\rho)$,
we say that $(G,\rho)$ is the {\em distributional limit} of the sequence $\{(G_n,\rho_n)\}$.

\paragraph{The Mass-Transport Principle}
Let $\rrgraphs$ denote the set of doubly-rooted isomorphism classes of doubly-rooted, connected, locally finite graphs.
A probability measure $\mu$ on $\rgraphs$ is {\em unimodular} if it obeys the following
{\em Mass-Transport Principle:}  For all Borel-measurable $F : \rrgraphs \to [0,\infty]$,
\begin{equation}\label{eq:mtp}
   \int \sum_{x \in V(G)} F(G,\rho,x) \,d\mu((G,\rho)) = \int \sum_{x \in V(G)} F(G,x,\rho)\,d\mu((G,\rho))\,.
\end{equation}
If $(G,\rho)$ is a random rooted graph with law $\mu$, and $\mu$ is unimodular,
we say that $(G,\rho)$ is a {\em unimodular random graph}.

\paragraph{Distributional limits of finite graphs}
As observed by Benjamini and Schramm \cite{BS01}, unimodular random graphs can be obtained from
limits of finite graphs.  Consider a sequence $\{G_n\} \subseteq \graphs$ of finite graphs,
and let $\rho_n$ denote a uniformly random element of $V(G_n)$.  Then $\{(G_n,\rho_n)\}$
is a sequence of $\rgraphs$-valued random variables,
and one has the following.

\begin{lemma}\label{lem:dl-unimodular}
   If $\{(G_n,\rho_n)\} \todl (G,\rho)$, then $(G,\rho)$ is unimodular.
\end{lemma}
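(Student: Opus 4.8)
The plan is to run the classical Benjamini--Schramm / Aldous--Lyons argument: reduce the Mass-Transport Principle \eqref{eq:mtp} to a class of ``local'' test functions whose associated mass-transport functionals are bounded and continuous on $(\rgraphs,\dloc)$, verify \eqref{eq:mtp} trivially for finite graphs with a uniform root, pass to the distributional limit using weak convergence, and then recover the general case by monotone convergence.

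First I would set up the reduction. Call $F : \rrgraphs \to [0,\infty)$ \emph{$(r,D)$-local} if $F(G,x,y) = 0$ whenever $\dist_G(x,y) > r$, if $F(G,x,y)$ depends only on the rooted isomorphism type of $B_G(x,r)$ with $y$ marked, and if $F \leq D$ pointwise. Since the Borel $\sigma$-algebra on $\rrgraphs$ is generated by the cylinder events determined by finite balls about the two roots, a routine monotone-class argument shows that every Borel $F : \rrgraphs \to [0,\infty]$ is a pointwise increasing limit of $(r_k,D_k)$-local functions with $r_k,D_k \to \infty$. By monotone convergence applied to both sides of \eqref{eq:mtp}, it suffices to treat $(r,D)$-local $F$.

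Next comes the heart of the matter. For a \emph{finite} graph $H$ with $\rho$ uniform on $V(H)$ and any $F \geq 0$,
\[
   \Ex{\sum_{x \in V(H)} F(H,\rho,x)}
   = \frac{1}{|V(H)|}\sum_{u \in V(H)}\sum_{x \in V(H)} F(H,u,x)
   = \Ex{\sum_{x \in V(H)} F(H,x,\rho)}\,,
\]
simply because the finite double sum is symmetric under swapping the summation variables, so \eqref{eq:mtp} is immediate for finite graphs with uniform root. To push this through a distributional limit $\{(G_n,\rho_n)\} \todl (G,\rho)$, I would further truncate: for $N \geq 1$ set $F_N(G,x,y) \seteq F(G,x,y)\,\Ind[|B_G(x,r)| \leq N]\,\Ind[|B_G(y,r)| \leq N]$, which keeps the double-sum symmetry above. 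Then $\Psi^N_1(G,\rho) \seteq \sum_{x} F_N(G,\rho,x)$ and $\Psi^N_2(G,\rho) \seteq \sum_x F_N(G,x,\rho)$ are bounded by $ND$ (each sum ranges over $B_G(\rho,r)$ and vanishes unless $|B_G(\rho,r)| \leq N$) and are determined by the isomorphism type of $B_G(\rho,2r)$, hence locally constant --- in particular bounded and continuous on $(\rgraphs,\dloc)$. Weak convergence then gives $\Ex{\Psi^N_i(G_n,\rho_n)} \to \Ex{\Psi^N_i(G,\rho)}$ for $i=1,2$, and combined with the finite-graph identity applied to $F_N$ this yields $\Ex{\Psi^N_1(G,\rho)} = \Ex{\Psi^N_2(G,\rho)}$. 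Letting $N \to \infty$ (local finiteness of $G$ gives $F_N \uparrow F$ pointwise, hence $\Psi^N_i \uparrow \sum_x F(G,\rho,\cdot)$-type sums) and invoking monotone convergence recovers \eqref{eq:mtp} for $(r,D)$-local $F$, which with the reduction above finishes the proof.

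I expect the only genuine obstacle to be the passage to the limit. The ``bare'' mass-transport functional $(G,\rho) \mapsto \sum_x F(G,\rho,x)$ need not be bounded on $\rgraphs$ once one drops any assumption on $\deg_G(\rho)$, so it is not directly amenable to weak convergence of measures; the device of truncating the ball sizes with the \emph{symmetric} cut-off $\Ind[|B_G(x,r)|\leq N]\,\Ind[|B_G(y,r)|\leq N]$ --- which simultaneously bounds the functionals, keeps them locally constant, and preserves the exact symmetry used in the finite case --- is what makes the argument work, and getting this bookkeeping right is the one point that requires care.
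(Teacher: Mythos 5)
The paper offers no proof of this lemma---it is quoted from Benjamini--Schramm \cite{BS01} (see also \cite{aldous-lyons})---and what you have written is the standard argument that those references supply: the exact double-sum symmetry for a uniformly rooted finite graph, a \emph{symmetric} ball-size truncation that makes the two mass-transport functionals bounded by $ND$ and determined by the isomorphism type of $(B_G(\rho,2r),\rho)$, hence $\dloc$-continuous, weak convergence to transfer the identity to the limit, and monotone convergence in $N$. That core is correct, and the symmetric cutoff $\1_{\{|B_G(x,r)|\le N\}}\1_{\{|B_G(y,r)|\le N\}}$ is exactly the right device; you are right that it is the point requiring care.

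The one step that does not survive literal scrutiny is your opening reduction: it is \emph{not} true that every Borel $F:\rrgraphs\to[0,\infty]$ is a pointwise increasing limit of $(r,D)$-local functions. If $h$ is local (determined by the $r$-ball of $x$) and $h\le \1_A$, then $h$ must vanish at every point whose $r$-ball type admits an extension outside $A$; so for a non-open Borel event such as ``$G$ is an infinite ray with $x=y$ its endpoint,'' the supremum of all local minorants of $\1_A$ is identically $0$, and no increasing sequence of local functions converges to $\1_A$. The monotone-class reduction has to be run at the level of the two intensity measures rather than by pointwise approximation of $F$: set $\mu_L(A)\seteq\E\sum_{x}\1_A(G,\rho,x)$ and $\mu_R(A)\seteq\E\sum_{x}\1_A(G,x,\rho)$ on $\rrgraphs$. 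Your argument for bounded local $F$ shows $\mu_L$ and $\mu_R$ agree on the $\pi$-system of local events; the sets $\{\dist_G(x,y)\le r,\ |B_G(x,r)|\le N,\ |B_G(y,r)|\le N\}$ belong to this family, exhaust $\rrgraphs$ by local finiteness, and have both $\mu_L$- and $\mu_R$-measure at most $N$, so both measures are $\sigma$-finite along a generating family. Dynkin's uniqueness theorem then gives $\mu_L=\mu_R$, which is \eqref{eq:mtp} for all Borel $F\ge 0$ via simple functions and monotone convergence. (Equivalently, apply the functional monotone class theorem to the bounded measurable functions supported on one of these truncation sets.) With that replacement, your proof is complete and coincides with the classical one.
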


   When $\{G_n\}$ is a sequence of finite graphs, we write $\{G_n\} \todl (G,\rho)$ for $\{(G_n,\rho_n)\} \todl (G,\rho)$
   where $\rho_n \in V(G_n)$ is chosen uniformly at random.

\paragraph{Unimodular random conformal graphs}

A {\em conformal graph} is a pair $(G,\omega)$, where $G$ is a connected, locally finite graph and
$\omega : V(G) \to \R_+$.
Let $\cgraphs$ and $\crgraphs$ denote the collections of
isomorphism classes of conformal graphs and
conformal rooted graphs, respectively.
As in \pref{sec:unimodular}, one can define a metric on $\crgraphs$ as follows:
$\dloc^*\left((G_1,\omega_1, \rho_1), (G_2, \omega_2, \rho_2)\right) = 1/(\alpha+1)$,
where
\[
   \alpha = \sup \left\{ r > 0 : B_{G_1}(\rho_1,r) \cong_{\rho} B_{G_2}(\rho_2, r) \textrm{ and }
   \vvmathbb{d}\!\left(\omega_1|_{B_{G_1}(\rho_1,r)},\omega_2|_{B_{G_2}(\rho_2,r)}\right) \leq \frac{1}{r}\right\}\,,
\]
where for two weights $\omega_1 : V(H_1) \to \R_+$ and $\omega_2 : V(H_2) \to \R_+$
on rooted-isomorphic graphs $(H_1,\rho_1)$ and $(H_2,\rho_2)$, we write
\begin{equation}\label{eq:omega-top}
   \vvmathbb{d}\!\left(\omega_1, \omega_2\right) \seteq \inf_{\psi : V(H_1) \to V(H_2)} \left\|\omega_2 \circ \psi - \omega_1\right\|_{\ell^{\infty}}\,,
\end{equation}
and the infimum is over all graph isomorphisms from $H_1$ to $H_2$ satisfying $\psi(\rho_1)=\rho_2$.

If $\{\mu_n\}$ and $\mu$ are probability measures on $\crgraphs$, we abuse
notation and write $\{\mu_n\} \todl \mu$ to denote weak convergence with respect to $\dloc^*$.
One defines unimodularity of a random rooted conformal graph $(G,\omega,\rho)$ analogously to \eqref{eq:mtp}:
It should now hold that for all Borel-measurable $F : \crrgraphs \to [0,\infty]$,
\[
   \int \sum_{x \in V(G)} F(G,\omega,\rho,x) \,d\mu((G,\omega,\rho))= \int \sum_{x \in V(G)} F(G,\omega,x,\rho)\,d\mu((G,\omega,\rho))\,.
\]
Indeed, such decorated graphs are a special case of the marked networks
considered in \cite{aldous-lyons}, and again it holds that
every distributional limit of finite unimodular random conformal graphs is a unimodular random conformal graph.

Suppose that $(G,\rho)$ is a unimodular random graph.  A {\em conformal weight on $(G,\rho)$}
is a unimodular random conformal graph $(G',\omega,\rho')$ such that $(G,\rho)$ and $(G',\rho')$
have the same law.  We will speak simply of a ``conformal metric $\omega$ on $(G,\rho)$.''
Only such unimodular metrics are considered in this work.

\subsubsection{Conformal growth rates under distributional limits}

In order to establish our main result, we need to pass from
a sequence of conformal metrics on finite graphs to a conformal metric
on the distributional limit.

\begin{theorem}\label{thm:QCG-limits}
   Consider $d,q \geq 1$.
   Suppose $\{(G_n,\rho_n)\}$ is a sequence of unimodular random graphs and $\{(G_n,\rho_n)\} \todl (G,\rho)$.
   If there is a function $h : \R_+ \to \R_+$ such that $h(R) \leq R^{o(1)}$ as $R \to \infty$,
   and a sequence of $L^q$-normalized unimodular random conformal graphs $\{(G_n,\omega_n,\rho_n)\}$
   satisfying
   \begin{equation}\label{eq:qcg-limits-growth}
      \|B_{\omega_n}(\rho_n,R)\|_{L^{\infty}} \leq R^d h(R)\,,
   \end{equation}
   then $\dimconfover^q(G,\rho) \leq d$.
   If the unimodular random graphs $\{(G_n,\rho_n)\}$ have uniform gauged $d$-dimensional growth, then
      $(G,\rho)$ has gauged $d$-dimensional growth.
\end{theorem}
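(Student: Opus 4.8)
The plan is to realize the metric on $(G,\rho)$ as a subsequential distributional limit of the $\omega_n$, after a renormalization that simultaneously keeps the weights from collapsing to $0$ and lets us control geodesics at every scale. First I would set $\hat\omega_n := \max(\omega_n,1)$ and work with this in place of $\omega_n$: it is a deterministic functional of the conformal graph, so $(G_n,\hat\omega_n,\rho_n)$ is still unimodular; it only increases $\dist_{\omega_n}$, so it only shrinks $B_{\omega_n}(\rho_n,R)$ and hence retains \eqref{eq:qcg-limits-growth}; and it keeps $\E[\hat\omega_n(\rho_n)^q]\le\E[\omega_n(\rho_n)^q]+1=2$ while forcing $\E[\hat\omega_n(\rho_n)^q]\ge 1$. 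Since $\dimconfover^q(G,\rho)$ depends only on the law of $(G,\rho)$ and is unchanged when a conformal metric is rescaled by a bounded factor, it now suffices to produce a unimodular conformal metric $\bar\omega$ on $(G,\rho)$ with $\E[\bar\omega(\rho)^q]\in(0,\infty)$ and $\|\#B_{\bar\omega}(\rho,R)\|_{L^\infty}\le R^d h^*(R)$ for some $h^*(R)\le R^{o(1)}$; renormalizing $\bar\omega$ and absorbing the bounded dilation into $h^*$ then finishes. A further benefit of $\hat\omega_n\ge 1$ is geodesic localization: any $\hat\omega_n$-geodesic of length $\le R$ uses at most $\lfloor R\rfloor$ edges, so $B_{\hat\omega_n}(\rho_n,R)$ is determined by the radius-$\lfloor R\rfloor$ combinatorial ball around $\rho_n$ and the weights on it. The one genuine obstacle is that $\{(G_n,\hat\omega_n,\rho_n)\}$ need not be tight in $(\crgraphs,\dloc^*)$, because $\R_+$ is not compact and a careless limit could place infinite weight on a positive-density set.

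I would recover tightness by compactifying the mark space. The graph-marginals $(G_n,\rho_n)$ converge, hence are tight in $(\rgraphs,\dloc)$, while $\hat\omega_n$ takes values in the \emph{compact} space $[1,\infty]$; viewing each $(G_n,\hat\omega_n,\rho_n)$ as a unimodular random network with marks in $[1,\infty]$ (a special case of the marked networks of \cite{aldous-lyons}), the sequence is tight in the corresponding space of rooted marked graphs. Pass to a subsequential limit $(G,\bar\omega,\rho)$: it is unimodular (distributional limits of unimodular marked networks are unimodular), its graph-marginal is the given limit $(G,\rho)$, and a priori $\bar\omega:V(G)\to[1,\infty]$. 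I would then show $\bar\omega$ is finite. Since $\omega\mapsto\omega(\rho)$ is continuous on marked graphs, the portmanteau theorem together with Markov's inequality gives $\Pr[\bar\omega(\rho)>t]\le\liminf_n\Pr[\hat\omega_n(\rho_n)>t]\le 2t^{-q}$ for every $t\ge 1$, so $\bar\omega(\rho)<\infty$ almost surely; applying the mass-transport principle \eqref{eq:mtp} to the $[1,\infty]$-marked limit with $F(G,\omega,u,v)=\1[\dist_G(u,v)\le m]\,\1[\omega(v)=\infty]$ and using local finiteness of $G$ then upgrades this to: almost surely $\bar\omega(x)<\infty$ for every $x\in V(G)$. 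Thus $\bar\omega$ is a genuine unimodular conformal metric on $(G,\rho)$, and Fatou (applied to the continuous-bounded truncations of $x\mapsto x^q$) gives $\E[\bar\omega(\rho)^q]\in[1,2]$.

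It remains to transfer the ball bound. By Skorokhod representation I may assume $(G_n,\hat\omega_n,\rho_n)\to(G,\bar\omega,\rho)$ almost surely in the marked-graph metric. Fix $R$; because weights are $\ge 1$, for every $x$ with $\dist_G(\rho,x)\le\lfloor R\rfloor$ the relevant geodesics stay inside the radius-$\lfloor R\rfloor$ ball, on which the graphs eventually agree and the weights converge in $[1,\infty]$ (a weight tending to $\infty$ merely deletes its vertex from all finite-length geodesics, consistently in the limit), so $\dist_{\hat\omega_n}(\rho_n,x)\to\dist_{\bar\omega}(\rho,x)$ in $[0,\infty]$. Hence any $x$ with $\dist_{\bar\omega}(\rho,x)<R$ lies in $B_{\hat\omega_n}(\rho_n,R)$ for all large $n$, and since $\{x:\dist_{\bar\omega}(\rho,x)<R\}$ is almost surely finite, $\#\{x:\dist_{\bar\omega}(\rho,x)<R\}\le\liminf_n\#B_{\hat\omega_n}(\rho_n,R)\le R^d h(R)$ almost surely; therefore $\|\#B_{\bar\omega}(\rho,R)\|_{L^\infty}\le(R+1)^d h(R+1)$ for all $R$. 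Renormalizing $\bar\omega$ proves $\dimconfover^q(G,\rho)\le d$. For the last assertion one runs the same construction scale by scale: given uniform $(C,\cdot,d)$-growth, apply it with $q=d$ to the sequence of scale-$(cR)$ metrics for a large enough constant $c=c(d)$ (which satisfy $\#B_{\omega_n^{cR}}(\rho_n,S)\le C(cR)^d$ for all $S\le cR$ by monotonicity in $S$), obtaining for each $R$ an $L^d$-normalized conformal metric on $(G,\rho)$ exhibiting $(C',R,d)$-growth with $C'=C'(C,d)$, which is exactly gauged $d$-dimensional growth. The heart of the matter is the escape-of-mass issue flagged in the first paragraph: the two normalizations resolve it, the key structural point being that inflating weights only shrinks balls — so the (possibly $\infty$-valued) limit can only help the growth estimate — while the $L^q$-normalization forces the $\infty$-valued part of $\bar\omega$ to be invisible from $\rho$, hence harmless.
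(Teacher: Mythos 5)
Your proposal is correct, but it takes a genuinely different route from the paper. The paper deduces the theorem from a compactness lemma (\pref{lem:weak2}): it proves subsequential convergence of $\{(G_n,\omega_n,\rho_n)\}$ directly in the metric $\dloc^*$, using only $\E[\omega_n(\rho_n)]\le 1$ (which follows from $L^q$-normalization by Jensen), via a mass-transport estimate bounding $\E\bigl[\omega_n(B_{G_n}(\rho_n,k))\,\1_{\{|B_{G_n}(\rho_n,2k)|\le b_k\}}\bigr]$, then conditioning on high-probability events that cap both ball sizes and the maximal weight in each ball, discretizing the weights to multiples of $\delta$, and invoking compactness of probability measures on finite sets; the transfer of the normalization and of the growth bound to the limit is then treated as immediate. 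You instead floor the weights at $1$ and compactify the mark space to $[1,\infty]$, so tightness of the marked sequence comes for free from tightness of the graph marginals; you then kill infinite limit marks at the root by Markov plus portmanteau and propagate finiteness to all vertices by a mass-transport argument, and you transfer the ball bound through a Skorokhod coupling, using that weights $\ge 1$ localize every ball of radius $R$ inside the combinatorial $\lfloor R\rfloor$-ball. What each approach buys: your flooring/compactification makes the final step---carrying \eqref{eq:qcg-limits-growth} and the normalization over to the limit---completely explicit and immune to the degeneration of small weights (without the floor, $B_\omega(\rho,R)$ need not be determined by any bounded combinatorial neighborhood), and it exploits the insensitivity of $\dimconfover^q$ and of gauged growth to bounded rescalings; the paper's lemma is stated under the weaker hypothesis $\E[\omega_n(\rho_n)]\le 1$, requires no modification of the weights, and stays inside the $\dloc^*$ framework it sets up, at the cost of leaving the transfer step implicit. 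Your scale-by-scale treatment of the gauged assertion (applying the single-scale argument to the metrics at scale $cR$ with $c$ an absolute constant absorbing the renormalization, yielding a uniform constant $C'=Cc^d$) is a valid substitute for the paper's corresponding claim. Minor points to tidy in a final write-up: state the standard facts you use about marked networks with compact mark space (tightness, preservation of unimodularity under local weak limits, Polishness for Skorokhod), and note that only the one-sided bound $\dist_{\bar\omega}(\rho,x)<R\Rightarrow\dist_{\hat\omega_n}(\rho_n,x)<R$ eventually is needed, which follows by comparing lengths of a single near-optimal path inside the matching combinatorial ball.
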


The preceding theorem follows immediately from the next lemma.

\begin{lemma}\label{lem:weak2}
   Consider a sequence $\{(G_n, \omega_n, \rho_n)\}$ of unimodular random conformal graphs satisfying
   the following conditions:
   \begin{enumerate}
      \item $\{(G_n,\rho_n)\}$ has a distributional limit.
      \item $\limsup_{n \to \infty} \E[\omega_n(\rho_n)] < \infty$.
   \end{enumerate}
   Then $\{(G_n,\omega_n,\rho_n)\}$ has a subsequential weak limit in the metric $\dloc^*$.
\end{lemma}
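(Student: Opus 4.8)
The plan is to invoke Prokhorov's theorem. The decorated graphs here are marked networks in the sense of \cite{aldous-lyons} with mark space $\R_+$, so $(\crgraphs,\dloc^*)$ is a Polish space and it suffices to show that the laws of $\{(G_n,\omega_n,\rho_n)\}$ form a tight family. I will use the standard description of relatively compact subsets of $\crgraphs$: a set $K$ is precompact if and only if the underlying rooted graphs form a precompact subset of $\rgraphs$ — equivalently, for each $r$ the ball size $\#B_G(\rho,r)$ is bounded uniformly over $K$ — and, for each $r$, the marks $\{\omega(v):v\in B_G(\rho,r)\}$ remain in a fixed bounded subset of $\R_+$. Fixing $\eps>0$, it therefore suffices to produce (a) a precompact $K_0\subseteq\rgraphs$ with $\Pr[(G_n,\rho_n)\in K_0]\ge 1-\eps/2$ for all $n$, and (b) for each $r\ge 1$ and $\delta>0$, a threshold $M=M(r,\delta)$ with $\Pr[\exists\,v\in B_{G_n}(\rho_n,r):\omega_n(v)>M]<\delta$ for all $n$. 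Part (a) is immediate from hypothesis~(1), since a weakly convergent sequence of laws on a Polish space is tight. Granting (b), a union bound over $r$ with $\delta=\eps\,2^{-r-2}$ and $M_r=M(r,\eps\,2^{-r-2})$ yields a precompact $K\subseteq\crgraphs$ — graphs in $\overline{K_0}$, marks $\le M_r$ on $B_G(\rho,r)$ — with $\Pr[(G_n,\omega_n,\rho_n)\in K]\ge 1-\eps$; its sequential compactness follows from a diagonal extraction (first a convergent subsequence of the graph parts, then, on each ball, a convergent subsequence of the bounded mark vectors).

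The substance is in (b), and this is where unimodularity enters. Observe first that, by the precompactness description above, hypothesis~(1) also forces the real random variable $\#B_{G_n}(\rho_n,s)$ to be tight uniformly in $n$, for each fixed $s$. Now apply the Mass-Transport Principle for the unimodular conformal graph $(G_n,\omega_n,\rho_n)$ to the transport rule
\[
   F(G,\omega,a,b)\seteq\1[\omega(a)>M]\,\1[\dist_G(a,b)\le r]\,\frac{1}{\#B_G(a,r)}\,,
\]
under which a vertex $a$ with $\omega(a)>M$ spreads a total mass of exactly $1$ uniformly over $B_G(a,r)$. The mass leaving $\rho_n$ is then $\1[\omega_n(\rho_n)>M]$, so the Mass-Transport Principle gives
\[
   \E\!\left[\sum_{a\in B_{G_n}(\rho_n,r)}\frac{\1[\omega_n(a)>M]}{\#B_{G_n}(a,r)}\right]=\Pr[\omega_n(\rho_n)>M]\le\frac{\E[\omega_n(\rho_n)]}{M}\le\frac{1}{M}\,,
\]
by Markov's inequality and hypothesis~(2). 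Since every $a^{*}\in B_{G_n}(\rho_n,r)$ has $B_{G_n}(a^{*},r)\subseteq B_{G_n}(\rho_n,2r)$, the left-hand sum is at least $\1[\exists\,a\in B_{G_n}(\rho_n,r):\omega_n(a)>M]\,/\,\#B_{G_n}(\rho_n,2r)$, so splitting on whether $\#B_{G_n}(\rho_n,2r)$ exceeds a level $N$ gives
\[
   \Pr[\exists\,v\in B_{G_n}(\rho_n,r):\omega_n(v)>M]\le\Pr[\#B_{G_n}(\rho_n,2r)>N]+\frac{N}{M}\,.
\]
Given $\delta$, first choose $N$ (using the uniform tightness of $\#B_{G_n}(\rho_n,2r)$) so that $\Pr[\#B_{G_n}(\rho_n,2r)>N]<\delta/2$ for all $n$, then take $M=2N/\delta$; this proves (b).

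The step I expect to be the main obstacle is precisely (b): propagating the single-moment bound $\E[\omega_n(\rho_n)]\le 1$ at the root to uniform control of $\omega_n$ on bounded balls. The naive transport rule $\1[\omega(a)>M]\,\1[\dist_G(a,b)\le r]$ only yields $\E[\#\{v\in B_{G_n}(\rho_n,r):\omega_n(v)>M\}]=\E\!\left[\1[\omega_n(\rho_n)>M]\,\#B_{G_n}(\rho_n,r)\right]$, whose right side need not tend to $0$ as $M\to\infty$ uniformly in $n$, since distributional convergence delivers only tightness — not uniform integrability — of $\#B_{G_n}(\rho_n,r)$. Normalizing each vertex's outgoing mass to $1$ decouples the weight threshold from the (possibly large) ball size and leaves a single ball-size factor to be absorbed at the end against the tightness that convergence of $\{(G_n,\rho_n)\}$ does supply. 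The remaining ingredients — that $\crgraphs$ is Polish, the compactness characterization, and the compactness of the concrete set $K$ — are routine facts about marked networks, which I would cite from \cite{aldous-lyons}.
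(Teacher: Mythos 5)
Your proof is correct, and it follows essentially the same route as the paper: both arguments use the Mass-Transport Principle together with hypothesis (2) to convert the first-moment bound at the root into high-probability control of the weights on balls around the root—decoupling the weight threshold from the ball sizes, which hypothesis (1) makes tight but not uniformly integrable—and then conclude with a local compactness argument. The differences are cosmetic: you transport a normalized indicator and invoke Prokhorov plus the standard precompactness criterion for marked networks, while the paper transports the truncated weight $\omega(x)\1_{\{|B_G(x,r)|\le b\}}$ and performs the compactness step by hand, conditioning on a good event and discretizing the weights.
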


\begin{proof}
   By passing to a subsequence and scaling, we may assume that
   \begin{equation}\label{eq:expec1}
      \E[\omega_n(\rho_n)] \leq 1 \qquad \forall n \geq 1\,.
   \end{equation}
   Let $\mu_n$ denote the law of $(G_n,\omega_n,\rho_n)$.  We will prove
   that the sequence $\{\mu_n\}$ is tight.  Since $(\crgraphs, \dloc^*)$
   is a complete, separable metric space, Prokhorov's theorem
   then implies that the sequence $\{\mu_n\}$ has a weak subsequential limit.

   To establish tightness, it suffices to exhibit a sequence $\{K_j \subseteq \crgraphs : j \geq 1\}$
   such that each $K_j$ is compact in the topology induced by $\dloc^*$ and
   \begin{equation}\label{eq:proh2}
      \lim_{j \to \infty} \lim_{n \to \infty} \mu_n(K_j) = 1\,.
   \end{equation}

   Let $\hat{\mu}_n$ denote the law of $(G_n,\rho_n)$.
   Since $(G_n,\rho_n)$ has a distributional limit and $(\rgraphs, \dloc)$ is complete,
   Prokhorov's theorem yields a sequence of compact sets $\{ \hat{K}_j \subseteq \rgraphs : j \geq 1\}$ such that
   \begin{equation}\label{eq:proh1}
      \lim_{j\to\infty} \lim_{n\to \infty} \hat{\mu}_n(\hat{K}_j) = 1\,.
   \end{equation}
   Denote the numbers:
   \[
      b_{j,k} \seteq \sup \left\{ |B_G(\rho, k)| : (G,\rho) \in \hat{K}_j \right\}\,.
   \]
   Since each $\hat{K}_j$ is compact, we have $b_{j,k} < \infty$ for all $j,k \geq 1$.

   Define the compact sets
   \[
      K_j \seteq \left\{ (G,\omega,\rho) : (G,\rho) \in \hat{K}_j \textrm{ and } \max_{x \in B_G(\rho, k)} \omega(x) \leq j k^2 b_{j,2k} \  \forall k \geq 1 \right\}\,.
   \]
   We are left to verify that \eqref{eq:proh2} holds.

   To that end, we apply the Mass-Transport Principle to $(G_n,\omega_n,\rho_n)$ with the flow
   \[
      F_{j,k}(G,\omega,x,y) \seteq \omega(y) \1_{\{\dist_G(x,y) \leq k\}} \1_{\{(G,x) \in \hat{K}_j \}}\,,
   \]
   yielding
   \begin{align*}
      j k^2 b_{j,2k} \Pr\left[(G_n,\rho_n) \in \hat{K}_j \textrm{ and } \max_{x \in B_{G_n}(\rho_n,k)} \omega_n(x) > j k^2 b_{j,2k}\right]
      &\leq
      \E\left[\1_{\{(G_n,\rho_n) \in \hat{K}_j\}} \sum_{y \in B_{G_n}(\rho_n,k)} \omega_n(y)\right] \\
      &=
      \E\left[\sum_{y \in V(G_n)} F_{j,k}(G_n,\omega_n,\rho_n,y)\right] \\
      &=
      \E\left[\sum_{x \in V(G_n)} F_{j,k}(G_n,\omega_n,x,\rho_n)\right]  \\
      &=
      \E\left[\omega_n(\rho_n) \sum_{x \in B_{G_n}(\rho_n,k)} \1_{\{(G,x) \in \hat{K}_j\}}\right] \\
      &\leq
      \E[\omega_n(\rho_n)] b_{j,2k}\,.
   \end{align*}
   Using \eqref{eq:expec1}, this gives
   \[
      \Pr\left[(G_n,\omega_n,\rho_n) \in K_j\right] \geq \Pr\left[(G_n,\rho_n) \in \hat{K}_j\right] - \frac{1}{j} \sum_{k \geq 1} \frac{1}{k^2}\,.
   \]
   In conjunction with \eqref{eq:proh1}, this yields \eqref{eq:proh2}.
\end{proof}

\subsection{Ahlfors regularity and systems of dyadic cubes}
\label{sec:metric-spaces}

Consider a complete, separable metric space $(X,d)$.
For $x \in X$ and two subsets $S,T \subseteq X$, we use the notations
$d(S,T) \seteq \inf_{x \in S, y \in T} d(x,y)$ and $d(x,S) = d(\{x\},S)$.
Define $\diam(S,d) \seteq \sup_{x,y \in S} d(x,y)$ and for $R \geq 0$, define
the closed balls
\[
   B_{(X,d)}(x, R) \seteq \{ y \in X : d(x,y) \leq R\}\,.
\]
We omit the subscript $(X,d)$ if the underlying metric space is clear from context.
We say that $(X,d)$ is {\em doubling} if there is a constant $\cD$ such that
every closed ball in $X$ can be covered by $\cD$ closed balls of half the radius,
and we let $\cD_{(X,d)}$ denote the infimal $\cD$ for which this holds.
$(X,d)$ is a {\em length space} if, for every $x,y \in X$, the distance
$d(x,y)$ is equal to the infimum of the length of continuous curves connecting
$x$ to $y$ in $X$.

If $\mu$ is a measure on the Borel $\sigma$-algebra of $X$, we refer to
$(X,d,\mu)$ as a {\em metric measure space.}
Such a space is said to be {\em Ahlfors $\beta$-regular} if there are constants $c_1,c_2 > 0$
such that
\[
   c_1 R^{\beta} \leq \mu\left(B(x,R)\right) \leq c_2 R^{\beta} \qquad \forall x \in X, R \in [0,\diam(X)]\,.
\]
It will occasionally be convenient to record the constants $c_1,c_2$, in which case we say that
$(X,d,\mu)$ is {\em $(c_1,c_2,\beta)$-regular.}
We recall the following elementary fact:
\begin{fact}\label{fact:ahlfors-doubling}
   If $(X,d,\mu)$ is Ahlfors $\beta$-regular for some $\beta > 0$, then $(X,d)$ is doubling,
   and $\cD_{(X,d)} \leq C$ for some constant $C=C(c_1,c_2,\beta)$ depending only on $c_1,c_2,\beta$.
\end{fact}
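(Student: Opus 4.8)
The plan is the standard ``volume packing'' argument, which uses neither the length-space nor graph structure. Fix $x \in X$ and a radius $R > 0$; the goal is to cover $B(x,R)$ by a bounded number of closed balls of radius $R/2$. First I would select a subset $\{y_1,\dots,y_m\} \subseteq B(x,R)$ that is maximal subject to being $(R/2)$-separated, i.e.\ $d(y_i,y_j) > R/2$ whenever $i \neq j$. Such a set is finite (the cardinality bound derived below applies to any $(R/2)$-separated subset of $B(x,R)$), so a maximal one exists. By maximality, no point $z \in B(x,R)$ can have $d(z,y_i) > R/2$ for every $i$, and therefore $B(x,R) \subseteq \bigcup_{i=1}^m B(y_i, R/2)$.

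It remains to bound $m$ by a constant depending only on $c_1,c_2,\beta$. Since the centers are pairwise more than $R/2$ apart, the balls $B(y_i, R/4)$ are pairwise disjoint, and each is contained in $B(x, 5R/4)$ because $y_i \in B(x,R)$. Assume first that $5R/4 \le \diam(X)$, so Ahlfors $\beta$-regularity applies to every ball in sight. Then
\[
   m \cdot c_1 (R/4)^\beta \;\le\; \sum_{i=1}^m \mu\bigl(B(y_i, R/4)\bigr) \;\le\; \mu\bigl(B(x, 5R/4)\bigr) \;\le\; c_2 (5R/4)^\beta,
\]
whence $m \le (c_2/c_1)\,5^\beta$. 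Thus $B(x,R)$ is covered by at most $C \seteq \bigl\lceil (c_2/c_1)\,5^\beta\bigr\rceil$ balls of radius $R/2$, which is exactly the doubling condition, so $\cD_{(X,d)} \le C$ with $C$ depending only on $c_1,c_2,\beta$.

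The only point that needs care is the degenerate regime where $\diam(X) < \infty$ and $R$ is comparable to or larger than $\diam(X)$, so that $B(x,5R/4)$ escapes the range $[0,\diam(X)]$ on which the regularity estimates are asserted. If $R \ge 2\diam(X)$ then $B(x,R) = X \subseteq B(x,R/2)$ and a single ball suffices. If $\diam(X) \le R < 2\diam(X)$, then $R/4 < \diam(X)$, so regularity still applies to each $B(y_i,R/4)$, and in the upper estimate I would replace $B(x,5R/4)$ by $X = B(x,\diam(X))$, using $\mu(X) \le c_2\diam(X)^\beta \le c_2 R^\beta$; the same disjointness/volume chain then yields $m \le (c_2/c_1)\,4^\beta$. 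In every case the resulting bound on the number of covering balls has the form $(c_2/c_1)\cdot O_{\beta}(1)$. There is no genuine obstacle here: the entire content is the one-line chain of inequalities above, and the diameter bookkeeping is routine.
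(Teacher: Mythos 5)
Your proof is correct: it is the standard maximal-separated-set packing argument (disjoint balls of radius $R/4$ inside an enveloping ball, volumes compared via the two-sided regularity bounds), and since the paper states this fact without proof, recalling it as elementary, this is precisely the argument being taken for granted. One small bookkeeping remark: your case split skips the range $\tfrac{4}{5}\diam(X) < R < \diam(X)$, where $5R/4 > \diam(X)$ but $R < \diam(X)$; it is handled by the same substitution you already use in the other boundary case, namely $\mu\bigl(B(x,5R/4)\bigr) \leq \mu(X) \leq c_2\,\diam(X)^{\beta} \leq c_2\,(5R/4)^{\beta}$, so nothing essential is missing.
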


We will employ
an appropriate system of hierarchical dyadic partitions of a doubling metric space $(X,d)$
along the lines of \cite{Christ90} and \cite{David91}.
Deterministic and stochastic constructions of this type
are a basic tool in harmonic analysis and metric geometry
(see, e.g., \cite{LN05} and \cite{HK12}).

For our purposes, it will be easiest to use a construction from \cite{HK12}
which we summarize here.
Consider a metric space $(X,d)$.
A bi-infinite sequence $\bm{P} = \{\bm{P}_n : n \in \Z\}$ of partitions of $X$ is said to be a {\em hierarchical system}
if $\bm{P}_n$ is a refinement of $\bm{P}_{n+1}$ for all $n \in \Z$.
We say that $\bm{P}$ is {\em $\Delta$-adic} if 
\[
   S \in \bm{P}_n \implies \diam_{(X,d)}(S) \leq \Delta^n \qquad \forall n \in \Z\,.
\]

\begin{theorem}[\cite{HK12}]
   \label{thm:ensemble}
   Suppose $(X,d)$ is a doubling metric space.
   Then there are numbers $Q, \ell, \Delta \geq 2$ that depend only on $\cD(X,d)$ such that the following holds.
   There is a collection $\{ \bm{P}^{(1)}, \ldots, \bm{P}^{(Q)} \}$ of $\Delta$-adic hierarchical systems
   such that for every subset $S \subseteq X$ with $\diam_{(X,d)}(S) \leq \Delta^m$,
   there is a set
   \[
      C \in \bigcup_{i=1}^Q \bm{P}^{(i)}_{m+\ell}
   \]
   such that $S \subseteq C$.
\end{theorem}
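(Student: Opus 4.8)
The plan is to deduce the statement from the standard construction of a \emph{single} system of dyadic cubes, applied $Q$ times to a family of mutually ``offset'' nets whose cubes at level $m+\ell$ are collectively large and dense enough to swallow any set of diameter at most $\Delta^m$.

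First I would recall the single-system input. Fix $\Delta \geq 2$ large, to be chosen as a function of $\cD$ only. Given, for each $n \in \Z$, a maximal $\Delta^n$-separated set $\mathscr X^n \subseteq X$ together with a ``parent'' map $\pi_n : \mathscr X^n \to \mathscr X^{n+1}$ satisfying $d(z,\pi_n(z)) < \Delta^{n+1}$ (such a map exists by maximality of $\mathscr X^{n+1}$), the construction of \cite{Christ90, David91, HK12} produces a $\Delta$-adic hierarchical system $\bm P = \{\bm P_n\}$ and a distinguished center $z_S \in \mathscr X^n$ for each $S \in \bm P_n$ obeying the sandwich property
\[
   B(z_S, a_1 \Delta^n) \subseteq S \subseteq B(z_S, a_2 \Delta^n)\,,
\]
with $0 < a_1 \leq a_2$ depending only on $\cD$ (and on $\Delta$, which itself depends only on $\cD$). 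I would take this as a black box. Now if $S \subseteq X$ has $\diam(S) \leq \Delta^m$ and $x_0 \in S$, then $S \subseteq B(x_0,\Delta^m)$, and for a level-$(m+\ell)$ cube $C$ with center $z$ one has $S \subseteq C$ whenever $d(x_0,z) + \Delta^m \leq a_1 \Delta^{m+\ell}$ (triangle inequality plus the sandwich bound $C \supseteq B(z,a_1\Delta^{m+\ell})$). Choosing $\ell = \ell(\cD)$ with $\Delta^{\ell} \geq 2/a_1$, this holds as soon as $d(x_0,z) \leq \tfrac{a_1}{2}\Delta^{m+\ell}$. Hence it suffices to build systems $\bm P^{(1)},\dots,\bm P^{(Q)}$ such that for every $n \in \Z$ the union of their level-$n$ centers is $\tfrac{a_1}{2}\Delta^n$-dense in $X$.

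To achieve this, for each fixed $n$ I would start from a maximal $\tfrac{a_1}{4}\Delta^n$-separated set $\mathscr Y^n$ (automatically $\tfrac{a_1}{4}\Delta^n$-dense). By the doubling property, any two points of $\mathscr Y^n$ at distance $< \Delta^n$ lie in a common ball of radius $\Delta^n$, which contains at most $\cD^{\,O(\log(1/a_1))}$ points that are pairwise $\tfrac{a_1}{4}\Delta^n$-separated; so the graph on $\mathscr Y^n$ joining pairs at distance $< \Delta^n$ has maximum degree $< Q$ for a number $Q = Q(\cD)$, and a greedy proper colouring splits $\mathscr Y^n$ into $Q$ colour classes, each of them $\Delta^n$-separated. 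Extending each class to a maximal $\Delta^n$-separated set gives nets $\mathscr X^{(1),n},\dots,\mathscr X^{(Q),n}$; choosing parent maps $\pi_n^{(t)}$ as above and running the single-system construction once for each $t$ yields $\bm P^{(1)},\dots,\bm P^{(Q)}$. Since $\bigcup_t \mathscr X^{(t),n} \supseteq \mathscr Y^n$ is $\tfrac{a_1}{4}\Delta^n$-dense, the combined level-$n$ centers are $\tfrac{a_1}{2}\Delta^n$-dense, and the reduction of the previous paragraph finishes the proof with $Q$, $\ell$, $\Delta$ all functions of $\cD$ alone.

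The step I expect to be the main obstacle is verifying that the $Q$ offset systems can all be fed into the single-system construction with one \emph{uniform} pair of sandwich constants $a_1, a_2$ — this is precisely what keeps $\ell$ and $\Delta$ independent of the particular space — together with the bookkeeping at the ``bottom'' of the hierarchy when $X$ is not perfect (as in \cite{HK12}, one simply lets $\bm P_n$ stabilise to singletons once $\Delta^n$ falls below the local separation). It is worth noting that using more than one system is genuinely necessary, not a technical artifact: already for $X = \R$ the usual dyadic intervals fail to contain a short interval straddling a dyadic point, and one must add the ``shifted by $1/3$'' grid; the role of doubling here is exactly to bound by $Q(\cD)$ the number of such ``shifts'' needed in a general metric space. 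Everything else — the triangle-inequality estimate, the greedy colouring, and the doubling count — is routine.
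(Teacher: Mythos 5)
Your proposal is correct, but it is worth noting that the paper does not prove this statement at all: it is imported verbatim from \cite{HK12}, where it appears as the ``adjacent systems of dyadic cubes'' theorem, so what you have written is a self-contained reconstruction rather than a parallel of an argument in the text. Your route --- take the one-system construction (Christ/Hyt\"onen--Kairema) as a black box with the sandwich property $B(z_C,a_1\Delta^n)\subseteq C\subseteq B(z_C,a_2\Delta^n)$, reduce the containment claim to having the level-$n$ centers of the $Q$ systems be $\tfrac{a_1}{2}\Delta^n$-dense, and obtain the $Q$ offset nets by greedily colouring a fine maximal net with $Q=Q(\cD)$ colours --- is sound, and is close in spirit to how \cite{HK12} themselves produce adjacent systems (a bounded number of alternatives at each scale, counted via doubling). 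The obstacle you flag is not actually an obstacle: the cited single-system theorem takes as input \emph{any} family of points that is $c_0\Delta^n$-separated and $C_0\Delta^n$-dense at each level (no nesting across levels is required), and your extended colour classes are maximal $\Delta^n$-separated sets, so $c_0=C_0=1$ uniformly and the sandwich constants $a_1,a_2$ depend only on $\Delta$; hence $\ell$ and $Q$ depend only on $\cD$, with no circularity since $\Delta$ and $a_1$ are fixed before $\mathscr Y^n$ is chosen. Two small bookkeeping points you should make explicit: (i) the cubes produced by the black box at level $n$ have diameter $\leq 4C_0\Delta^n$, not $\leq\Delta^n$, so to meet the paper's definition of ``$\Delta$-adic'' you must shift the indexing by a fixed amount (which only changes $\ell$ and rescales $a_1$ by a constant); and (ii) the parent map must satisfy the mild compatibility hypotheses of the cited construction, which the nearest-coarse-point choice does. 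With those remarks added, the argument is complete.
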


\section{Quasi-packings and quasisymmetric invariance}
\label{sec:coarse-quasi}

We first demonstrate that the quasi-multiplicity condition \eqref{eq:bdd-mult}
can be replaced by a simpler assumption
whenever $(X,\dist)$ is an Ahlfors-regular length space
and one uses only strict balls instead of quasi-balls.

\subsection{Round balls, length spaces, and coarse packings}
\label{sec:round-bodies}

Let $\cB$ denote the set of closed balls in $(X,\dist)$.
Say that a graph $G$ is {\em $(\tau,M)$-coarsely packed in $(X,\dist)$}
if there is a map $\Phi : V(G) \to \cB$ satisfying \eqref{eq:coarse-tan} as well as
   \begin{equation}\label{eq:weak-mult}
      \# \{ v \in V(G) : x \in \Phi(v) \} \leq M\qquad \forall x \in X\,.
   \end{equation}
Recall that $G$ {\em coarsely packs in $(X,\dist)$} if it is $(\tau,M)$-coarsely packed for some $\tau,M \geq 1$.
Our goal in this section is to provide conditions on $(X,\dist)$ under
which coarse packings yield quasi-packings.

\begin{theorem}\label{thm:weak-to-coarse}
   Suppose that $(X,\dist,\mu)$ is an Ahlfors $d$-regular metric measure space
	and additionally that $(X,\dist)$ is a length space.
   Then for every locally finite graph $G$:
   \[
      \textrm{$G$ coarsely packs in $(X,\dist)$} \implies \textrm{$G$ quasi-packs in $(X,\dist)$}\,.
   \]
   Quantitatively, if $G$ is $(\tau,M)$-coarsely packed in $(X,\dist)$, then it is
   $(\tau',M)$-quasi-packed in $(X,\dist)$ with $\tau' \leq C \tau$, for some
   $C=C(X,\dist)$.
\end{theorem}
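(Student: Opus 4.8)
The plan is to show that the \emph{same} map $\Phi : V(G) \to \cB$ witnessing the $(\tau,M)$-coarse packing already witnesses a quasi-packing once the tangency constant is enlarged. Every closed ball is a $1$-quasi-ball, hence a $\tau'$-quasi-ball for any $\tau'\geq 1$, so $\Phi$ automatically takes values in $\cB_{\tau'}$; and the quasi-tangency inequality \eqref{eq:coarse-tan} for $\Phi$ with parameter $\tau'$ follows from the same inequality with the smaller parameter $\tau$, since its right-hand side is monotone in the parameter. So the entire content is to upgrade the point-multiplicity bound \eqref{eq:weak-mult} — the $r=0$ instance of \eqref{eq:bdd-mult} — to the multi-scale bound \eqref{eq:bdd-mult}, for a suitable $\tau'\leq C\tau$; this is the one place where the length-space hypothesis enters.

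Fix $x\in X$ and $r>0$, and let $\Phi(v_1)=B(x_1,r_1),\dots,\Phi(v_k)=B(x_k,r_k)$ be the images of the distinct vertices with $B(x,r)\cap B(x_i,r_i)\neq\emptyset$ and $\diam B(x_i,r_i)\geq \tau' r$; the goal is to bound $k$. Since $\diam B(x_i,r_i)\leq 2r_i$, the second condition forces $r_i\geq \tfrac{\tau'}{2}r$, so for $\tau'$ a fixed multiple of $\max(\tau,1)$ each $r_i$ is comfortably larger than $r$. (If $\tau' r>\diam(X)$ then no vertex qualifies and $k=0$; otherwise $r\leq\diam(X)$, and $3r\leq\diam(X)$ once $\tau'\geq 3$.) The key step is the following \emph{fatness} claim: for each $i$ there is $w_i\in X$ with $B(w_i,r/2)\subseteq B(x_i,r_i)\cap B(x,3r)$. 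To prove it, choose $z_i\in B(x,r)\cap B(x_i,r_i)$. If $\dist(z_i,x_i)<r$, take $w_i:=z_i$; then $\dist(y,x_i)\leq r/2+r\leq r_i$ for $y\in B(w_i,r/2)$, using $r_i\geq \tfrac32 r$. Otherwise, invoking the length-space property, travel along a curve from $z_i$ to $x_i$ of length at most $\dist(z_i,x_i)+\tfrac r4$ for arclength $r$ to reach a point $w_i$; then $\dist(z_i,w_i)\leq r$ and $\dist(w_i,x_i)\leq \dist(z_i,x_i)+\tfrac r4-r\leq r_i-\tfrac r2$, so again $B(w_i,r/2)\subseteq B(x_i,r_i)$. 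In either case $\dist(w_i,x)\leq \dist(w_i,z_i)+\dist(z_i,x)\leq 2r$, giving $B(w_i,r/2)\subseteq B(x,3r)$.

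With the claim in hand, distinctness of the $v_i$ and \eqref{eq:weak-mult} yield the pointwise inequality $\sum_{i=1}^k \Ind_{B(w_i,r/2)}\leq \sum_{i=1}^k \Ind_{B(x_i,r_i)}\leq M$; integrating over $B(x,3r)$ and using Ahlfors $d$-regularity (lower bound $\mu(B(w_i,r/2))\geq c_1(r/2)^d$ on each summand, upper bound $\mu(B(x,3r))\leq c_2(3r)^d$) gives $k\,c_1(r/2)^d\leq M\,c_2(3r)^d$, hence $k\leq M\cdot (c_2/c_1)\,6^d$. Collecting the two cases for $r$ shows that $\Phi$ witnesses a $(\tau',M')$-quasi-packing, with $\tau'\leq C\tau$ (in fact $\tau'=3\max(\tau,1)$ works) and $M'$ bounded in terms of $M$ and the Ahlfors constants of $(X,\dist,\mu)$; in particular $G$ quasi-packs in $(X,\dist)$, which is the assertion.

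The delicate point is the fatness claim, and it is exactly there that ``length space'' is indispensable: in a non-geodesic metric space, $B(x_i,r_i)$ can be pinched near a boundary point $z_i$, and then no sub-ball near $x$ of radius proportional to $r$ lies inside $B(x_i,r_i)$, which would break the volume-packing count. Note one does \emph{not} want a fat chunk inside the intersection $B(x_i,r_i)\cap B(x,r)$ — that can genuinely be a thin sliver — only one inside $B(x_i,r_i)$ and inside the slightly dilated ball $B(x,3r)$. The remainder is routine: the bookkeeping for balls of radius comparable to $\diam(X)$, and the harmless $\varepsilon$-loss incurred when truncating the almost-length-minimizing curve furnished by the length-space hypothesis.
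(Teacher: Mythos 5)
Your proof is correct and follows essentially the same route as the paper: the paper factors the argument through the notion of $\eta$-round sets (length space $+$ doubling measure $\Rightarrow$ every ball is uniformly round) and then applies \pref{lem:touching}, whereas you inline exactly that geometric step by using a near-geodesic to place a ball $B(w_i,r/2)\subseteq \Phi(v_i)\cap B(x,3r)$ and then run the same Ahlfors-regularity volume count. Like the paper's own proof, your multiplicity bound comes out as $M$ times a constant depending on the Ahlfors data rather than literally $M$, so there is no discrepancy with what the paper actually establishes.
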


We will prove the theorem after establishing a few preliminary results.
Assume now that $\cX=(X,\dist,\mu)$ is a complete, separable metric measure space.
A Borel set $S \subseteq X$ is said to be {\em $\eta$-round}
if the following holds:  For every ball $B$ in $X$ whose center lies in $\bar{S}$
(the closure of $S$)
and for which $S \nsubseteq B$, we have
\begin{equation}\label{eq:eta-round}
   \mu(S \cap B) \geq \eta \cdot \mu(B)\,.
\end{equation}
Say that $\cX$ is {\em $\eta$-round} if every ball in $\cX$ is $\eta$-round,
and that $\cX$ is {\em uniformly round} if it is $\eta$-round for some $\eta > 0$.
For instance, $\R^d$ with the Euclidean metric is $2^{-d}$-round.

We recall that the measure $\mu$ is said to be {\em doubling}
if there is a constant $C \geq 1$ such that
\begin{equation}\label{eq:mudouble}
   \mu(B(x,2r)) \leq C \mu(B(x,r))
\end{equation}
for all $x \in X$ and $r \geq 0$.

\begin{lemma}
   If $\cX$ is a length space and $\mu$ is doubling, then
   $\cX$ is uniformly round.
   In particular, if \eqref{eq:mudouble} holds for some $C \geq 1$, then $\cX$
   is $1/(2C)$-round.
\end{lemma}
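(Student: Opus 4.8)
The plan is to fix arbitrary balls $S = B(z,\rho)$ and $B = B(x,r)$ with $x \in \bar S$ and $S \nsubseteq B$, and to locate inside $S \cap B$ a metric ball whose measure, after iterating \eqref{eq:mudouble}, is comparable to $\mu(B)$; this is exactly what \eqref{eq:eta-round} demands.

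First I would record the elementary fact that $r$ cannot be much bigger than $\rho$. Since $S \nsubseteq B$, there is $w \in S$ with $\dist(x,w) > r$; as $x \in \bar S$ and $w \in S$ we have $\dist(x,z) \le \rho$ and $\dist(w,z) \le \rho$, so $r < \dist(x,w) \le \dist(x,z) + \dist(z,w) \le 2\rho$, and likewise $\dist(x,z) \ge \dist(x,w) - \dist(w,z) > r-\rho$. If moreover $\dist(x,z) < \rho - r$, then $B(x,r) \subseteq B(z,\rho) = S$, hence $S \cap B = B$ and there is nothing to prove; so I may assume $\rho - r \le \dist(x,z) \le \rho$.

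The length-space hypothesis now lets me push $x$ slightly into the interior of $S$ without leaving $B$. Fix a small $\epsilon>0$; using that $\cX$ is a length space, choose a curve from $x$ to $z$ parametrized by arc length on $[0,L]$ with $L \le \dist(x,z)+\epsilon$, and let $y$ be the point on it at parameter $a \seteq \tfrac12(r-\rho+\dist(x,z)) + \epsilon$. The bounds above (together with $\dist(x,z)>r-\rho$) give $0\le a\le L$, hence $\dist(x,y)\le a$ and $\dist(z,y)\le L-a \le \dist(x,z)+\epsilon-a$; also $a \le \tfrac r2+\epsilon$ since $\dist(x,z)\le\rho$. Put $s \seteq \tfrac12(r+\rho-\dist(x,z)) - 2\epsilon$, so $s \ge \tfrac r2 - 2\epsilon$. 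A direct computation gives $B(y,s)\subseteq B\cap S$: indeed $\dist(x,y)+s \le a+s = r-\epsilon < r$ and $\dist(z,y)+s \le \dist(x,z)+\epsilon-a+s = \rho - 2\epsilon < \rho$.

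It remains to apply doubling. Since $\dist(x,y)\le \tfrac r2+\epsilon$, we have $B(x,r)\subseteq B(y, r+\dist(x,y)) \subseteq B(y, \tfrac{3r}2+\epsilon)$, and for $\epsilon$ small, $\tfrac{3r}2+\epsilon \le 4(\tfrac r2 - 2\epsilon) \le 4s$; two applications of \eqref{eq:mudouble} then yield $\mu(S\cap B) \ge \mu(B(y,s)) \ge C^{-2}\mu(B)$, which already proves $\cX$ is uniformly round. To obtain the sharper constant $1/(2C)$ in the statement one refines this last comparison: when $\dist(x,z)\le \rho - \tfrac r3$ the same formulas give $s \ge \tfrac{2r}3 - O(\epsilon)$ and $\dist(x,y) \le \tfrac r3 + O(\epsilon)$, so $B(x,r)\subseteq B(y,2s)$ and a single use of \eqref{eq:mudouble} gives $\mu(S\cap B)\ge C^{-1}\mu(B)$; the complementary range $\dist(x,z) > \rho - \tfrac r3$, in which $x$ sits near the bounding sphere of $S$, must be treated by a separate argument. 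I expect this final step — extracting $1/(2C)$ rather than merely $C^{-2}$, with the awkward configuration being $x$ essentially on $\partial S$ — to be the only genuine subtlety; everything else is the soft combination of the length-space and doubling properties sketched above.
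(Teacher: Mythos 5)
Your argument is correct and follows essentially the same route as the paper's proof: both use the length-space property to place a point partway along a near-geodesic between the two ball centers (your $y$, the paper's $z$), verify that a ball about that point lies inside $S\cap B$, and then compare measures via \eqref{eq:mudouble}. The only shortfall is quantitative: you obtain $\eta = C^{-2}$ rather than the stated $1/(2C)$, and you leave the sharper constant open when the test-ball center sits near the boundary of $S$. This is not a substantive gap. Every later use of the lemma (\pref{lem:touching}, \pref{thm:weak-to-coarse}) needs only uniform roundness with \emph{some} $\eta>0$; and the paper's own one-line argument---which takes $z$ with $\dist(y,z)\le r'$, $\dist(x,z)\le r-r'$ and asserts $B(z,r')\subseteq B(y,r')\cap B(x,r)$---in fact needs the inner radius shrunk (say to $r'/2$) for the first containment to hold, and it also restricts to test radii $r'<r$ even though $S\nsubseteq B$ only forces $r'<2r$; once repaired it too yields a constant of order $C^{-2}$, so your handling of the full range of radii is, if anything, the more careful one.
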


\begin{proof}
   Let $B_0=B(x,r)$.
   Consider any $y \in B_0$ and $r' < r$.
   Since $(X,\dist)$ is a length space, there is a point $z \in B_0$
   with $\dist(y,z)+\dist(z,x)=\dist(x,y)$ and satisfying
\begin{align*}
   \dist(x,z) &\leq r-r'\,, \\
   \dist(y,z) &\leq r'\,.
\end{align*}
In particular, it holds that $B(z,r') \subseteq B(y,r') \cap B(x,r)$,
implying that
\[
   \mu\left(B_0 \cap B(y,r')\right) \geq \mu(B(z,r')) \geq C^{-1} \mu(B(z,2r')) \geq C^{-1} \mu(B(y,r'))\,.\qedhere
\]
\end{proof}

   We will require the following elementary fact which states that a point in an
   Ahlfors $d$-regular space cannot be near too many pairwise-disjoint
   $\eta$-round bodies of large diameter.

\begin{lemma}\label{lem:touching}
	Suppose $\cX$ is $(c_1,c_2,d)$-regular and
   $S_1, S_2, \ldots, S_K \subseteq X$ are $\eta$-round sets
that satisfy
\begin{equation}\label{eq:mult-touching}
	\# \{ i \in \{1,\ldots,K\} : y \in S_i \} \leq s\qquad \forall y \in X\,,
\end{equation}
	and furthermore there is some $x \in X$ such that
   \[
      \max_{i \in [K]} \dist(x,S_i) < \alpha \cdot \min_{i \in [K]} \diam(S_i)\,,
   \]
   Then,
   \[
      K \leq s \frac{c_2}{c_1 \eta} (1+2\alpha)^d\,.
	\]
\end{lemma}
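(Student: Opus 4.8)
The plan is a volume-packing argument: each $S_i$ deposits a fixed amount of $\mu$-mass into one common ball centered near $x$, while the bounded-multiplicity hypothesis \eqref{eq:mult-touching} caps how much mass that ball can hold.

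First I would set $D \seteq \min_{i \in [K]} \diam(S_i)$ and fix any radius $r \in (0, D/2)$; at the end I will let $r \uparrow D/2$. For each $i$, since $\dist(x,S_i) < \alpha D$, there is a point $y_i \in S_i$ with $\dist(x,y_i) < \alpha D$. Because $r < D/2 \leq \tfrac12 \diam(S_i)$, two points of $S_i$ at distance exceeding $2r$ (which exist, as $\diam(S_i) \geq D > 2r$) cannot both lie in $B(y_i,r)$, so $S_i \not\subseteq B(y_i,r)$; since also the center $y_i$ lies in $S_i \subseteq \bar S_i$, the $\eta$-roundness condition \eqref{eq:eta-round} applies and, using Ahlfors regularity (valid since $r < \diam(X)$), gives
\[ \mu\!\left(S_i \cap B(y_i,r)\right) \;\geq\; \eta\,\mu\!\left(B(y_i,r)\right) \;\geq\; \eta\, c_1\, r^d\,. \]
Moreover, if $z \in B(y_i,r)$ then $\dist(x,z) \leq \dist(x,y_i) + r < \alpha D + D/2$, so $B(y_i,r) \subseteq B(x,R)$ with $R \seteq (\alpha + \tfrac12)D$.

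Next I would sum over $i$ and invoke \eqref{eq:mult-touching} (every point of $X$ lies in at most $s$ of the $S_i$):
\[ K\,\eta\, c_1\, r^d \;\leq\; \sum_{i=1}^K \mu\!\left(S_i \cap B(y_i,r)\right) \;\leq\; \sum_{i=1}^K \mu\!\left(S_i \cap B(x,R)\right) \;=\; \int_{B(x,R)} \#\{ i : y \in S_i \}\, d\mu(y) \;\leq\; s\,\mu\!\left(B(x,R)\right) \;\leq\; s\, c_2\, R^d\,, \]
where the last inequality is again Ahlfors regularity (the case $R \geq \diam(X)$ is harmless, since then $B(x,R)=X$ and $\mu(X) \leq c_2 \diam(X)^d \leq c_2 R^d$). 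Rearranging yields $K \leq \tfrac{s\, c_2}{c_1\, \eta}\,(R/r)^d$, and letting $r \uparrow D/2$ gives $R/r \to 1 + 2\alpha$, hence $K \leq s\,\tfrac{c_2}{c_1\, \eta}\,(1+2\alpha)^d$, as claimed.

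I do not expect a genuine obstacle. The only delicate points are (i) ensuring $S_i \not\subseteq B(y_i,r)$ so that \eqref{eq:eta-round} is legitimately applicable — which is exactly why the radius is taken strictly below $D/2$, with the small loss recovered in the limit — and (ii) checking that $r$ and $R$ lie in the range where Ahlfors regularity is stated, which is automatic here.
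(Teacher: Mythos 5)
Your proof is correct and follows essentially the same volume-packing route as the paper: each $\eta$-round set deposits mass at least $\eta c_1 r^d$ into a common ball around $x$, and the multiplicity bound \eqref{eq:mult-touching} together with Ahlfors regularity of that ball caps the count. The only difference is bookkeeping — you set the radius scale by $D = \min_{i} \diam(S_i)$ and let $r \uparrow D/2$, whereas the paper uses balls of radius $\lambda/(2\alpha)$ with $\lambda = \max_i \dist(x,S_i)$ centered at points of $\bar S_i$ — and both give the same constant $(1+2\alpha)^d$.
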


\begin{proof}
   Let $\lambda = \max_{i \in [K]} \dist(x,S_i)$, and let $\{x_i\}$ be a collection
   of points such that $x_i \in \bar{S}_i$ and $\dist(x,x_i) \leq \lambda$.
   Consider the balls $B_i = B(x_i, \lambda/(2 \alpha))$.  By assumption,
   $\diam(S_i) > \lambda/\alpha$, hence $S_i \nsubseteq B_i$.
   Thus by the definition of $\eta$-round,
   \[
      \mu(S_i \cap B_i) \geq \eta \mu(B_i) \geq \eta c_1 (\lambda/(2\alpha))^d\,,
   \]
   where the latter inequality follows from the Ahlfors regularity of $\cX$.
   But the sets $\{S_i\}$ satisfy $S_i \cap B_i \subseteq B(x, \lambda(1+1/(2\alpha))$ for every $i \in [K]$ and \eqref{eq:mult-touching},
	implying that
   \[
      K \eta c_1 (\lambda/(2\alpha))^d \leq s\cdot \mu\left(\vphantom{\bigoplus\vphantom{\bigoplus}}B(x,\lambda(1+1/(2\alpha))\right)
\leq s c_2 \lambda^d (1+1/(2\alpha))^{d}\,,
   \]
   where again the final inequality uses the Ahlfors $d$-regularity.
\end{proof}

\begin{proof}[Proof of \pref{thm:weak-to-coarse}]
	Consider $\Phi : V(G) \to \cB$
   and suppose that \eqref{eq:weak-mult} holds for some $M'$.
   Let $v_1, \ldots, v_{M} \in V(G)$ be such that
	$B(x,r) \cap \Phi(v_i) \neq \emptyset$ and $\diam(\Phi(v_i)) \geq r$
   for each $i=1,\ldots,M$.
   Under our assumptions, for some
	$c_1,c_2,\eta > 0$, \pref{lem:touching} (applied with $s=M'$ and $\alpha=1$) yields
\[
	M \leq M'\frac{c_2}{c_1 \eta} 3^d\,\qedhere
\]
\end{proof}

\subsection{Quasisymmetric stability}
\label{sec:stability}

Recall that if $(X,d_X)$ and $(Y,d_Y)$ are metric spaces, then
a map $f : X \to Y$ is {\em $\eta$-quasisymmetric} if
there is a homeomorphism $\eta : [0,\infty) \to [0,\infty)$ such that
      for all $x,y,z \in X$:
\begin{equation}\label{eq:qs}
   \frac{d_Y(f(x),f(y))}{d_Y(f(x),f(z))} \leq \eta\left(\frac{d_X(x,y)}{d_X(x,z)}\right)\,.
\end{equation}
The spaces $(X,d_X)$ and $(Y,d_Y)$ are said to be {\em quasisymmetrically equivalent}
if, for some $\eta$, there is an $\eta$-quasisymmetric bijection from $X$ to $Y$.

A metric space $(X,d_X)$ is {\em uniformly perfect}
if there is a number $\lambda \geq 1$ so that for every $x \in X$ and $r > 0$,
the set $B_X(x,r) \setminus B_X(x,r/\lambda)$ is non-empty whenever $X \setminus B_X(x,r)$
is non-empty.
We refer to \cite[\S 11]{Heinonen01} for background on these notions and their interplay.
In particular, one has the following basic facts.

\begin{lemma}\label{lem:perfect-qs}
   If $(X,d_X)$ and $(Y,d_Y)$ are quasisymmetrically equivalent,
   then $(X,d_X)$ is uniformly perfect if and only if $(Y,d_Y)$ is uniformly perfect.
\end{lemma}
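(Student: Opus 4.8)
The plan is to prove the ``if'' direction directly; the ``only if'' direction then follows by applying it to the inverse map, recalling the standard fact that the inverse of a quasisymmetric homeomorphism is again quasisymmetric (see \cite[\S 10--11]{Heinonen01}). So fix an $\eta$-quasisymmetric bijection $f\colon X \to Y$ and suppose $(X,d_X)$ is $\lambda$-uniformly perfect; I will produce a constant $\lambda' = \lambda'(\eta,\lambda)$ witnessing that $(Y,d_Y)$ is uniformly perfect. At the top level this is natural: quasisymmetric maps quasi-preserve ratios of distances, and uniform perfectness is precisely a statement about the existence of points at controlled \emph{relative} distance from a center.

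Concretely, fix $y_0 \in Y$ and $r>0$ with $Y\setminus B_Y(y_0,r)\neq\emptyset$; the goal is a point at distance in $(r/\lambda',r]$ from $y_0$. Set $\beta \seteq \inf\{d_Y(y_0,y) : d_Y(y_0,y)>r\}$, which is finite and satisfies $\beta\geq r$, and pick $b\in Y$ with $r < d_Y(y_0,b) < 2\beta$; by definition of $\beta$ there is no point of $Y$ at distance strictly between $r$ and $\beta$ from $y_0$. Put $a \seteq f^{-1}(y_0)$, $c \seteq f^{-1}(b)$, and $s \seteq d_X(a,c)>0$. Now fix an integer $j\geq 1$, depending only on $\eta$, with $\eta(2^{-j}) < \tfrac12$ (possible since $\eta(0)=0$). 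Since $c\notin B_X(a,2^{-j}s)$, uniform perfectness of $X$ at radius $2^{-j}s$ yields a point $p$ with $2^{-j}s/\lambda < d_X(a,p) \leq 2^{-j}s$; set $q \seteq f(p)$. Applying \eqref{eq:qs} to the triples $(a,p,c)$ and $(a,c,p)$ gives
\[
   \frac{d_Y(y_0,b)}{\eta(\lambda 2^{j})} \;<\; d_Y(y_0,q) \;\leq\; \eta(2^{-j})\, d_Y(y_0,b)\,.
\]
The upper bound gives $d_Y(y_0,q) < \tfrac12 d_Y(y_0,b) < \beta$; since no point of $Y$ lies at distance strictly between $r$ and $\beta$ from $y_0$, this forces $d_Y(y_0,q)\leq r$. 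The lower bound, together with $d_Y(y_0,b)\geq\beta\geq r$, gives $d_Y(y_0,q) > r/\eta(\lambda 2^{j})$. Hence $q \in B_Y(y_0,r)\setminus B_Y(y_0,r/\lambda')$ with $\lambda' \seteq \max\{2,\,2\eta(\lambda 2^{j})\}$, which depends only on $\eta$ and $\lambda$. This establishes that $(Y,d_Y)$ is $\lambda'$-uniformly perfect, and applying the same argument to $f^{-1}$ completes the proof.

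The only real subtlety — the point I would be most careful about — is that the witness $b$ outside $B_Y(y_0,r)$ need not be close to the $r$-sphere: a priori $Y$ could have a large empty annulus $(r,\beta)$ around $y_0$, in which case a naive estimate places $q$ inside that annulus rather than inside $B_Y(y_0,r)$. The trick above is that the single point $q$ manufactured from uniform perfectness of $X$ automatically lands strictly inside radius $\beta$ (because it lands comfortably inside radius $d_Y(y_0,b) < 2\beta$), so emptiness of the annulus $(r,\beta)$ upgrades ``$d_Y(y_0,q)<\beta$'' to ``$d_Y(y_0,q)\leq r$'' at no cost, while the quasisymmetry lower bound simultaneously keeps $q$ away from $y_0$. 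Everything else is routine bookkeeping with the homeomorphism $\eta$ (e.g., checking $2^{-j}s<s$ so that the complement of $B_X(a,2^{-j}s)$ is nonempty, and that the chosen $\lambda'$ is $\geq 1$).
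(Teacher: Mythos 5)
Your argument is correct. Note that the paper does not actually prove \prettyref{lem:perfect-qs}: it is stated as a known basic fact with a pointer to \cite[\S 11]{Heinonen01}, so there is no internal proof to compare against. Your self-contained proof is the standard one for quasisymmetric invariance of uniform perfectness, and it handles the one genuinely delicate point correctly: a witness $b$ outside $B_Y(y_0,r)$ may sit far beyond the $r$-sphere, and your device of taking $\beta$ to be the infimal distance exceeding $r$, choosing $b$ with $d_Y(y_0,b)<2\beta$, and forcing $d_Y(y_0,q)<\tfrac12 d_Y(y_0,b)<\beta$ (so that emptiness of the annulus $(r,\beta)$ upgrades this to $d_Y(y_0,q)\le r$) is exactly what is needed. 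The two applications of \eqref{eq:qs} to the triples $(a,p,c)$ and $(a,c,p)$ are legitimate (denominators are nonzero since $f$ is injective), the constant $j$ depends only on $\eta$, and the resulting $\lambda'=\max\{2,2\eta(\lambda 2^j)\}$ depends only on $(\eta,\lambda)$; there is no hidden inconsistency between your upper and lower bounds since $\eta(\lambda 2^j)\ge\eta(1)\ge 1$ automatically. Reducing the converse direction to the forward one via the quasisymmetry of $f^{-1}$ (with $\eta'(t)=1/\eta^{-1}(1/t)$) is the same fact the paper itself invokes in the proof of \prettyref{thm:qs-invariant}, so that step is consistent with the paper's conventions. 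The degenerate case where $X$ (hence $Y$) is a single point is vacuous and needs no separate treatment.
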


\begin{lemma}\label{lem:diams}
   If $f : X \to Y$ is $\eta$-quasisymmetric and $A \subseteq B \subseteq X$, then
   \[
      \frac{1}{2 \eta\left(\frac{\diam_X(B)}{\diam_X(A)}\right)}\leq
      \frac{\diam_Y(f(A))}{\diam_Y(f(B))} \leq \eta\left(\frac{2\, \diam_X(A)}{\diam_X(B)}\right)\,.
   \]
\end{lemma}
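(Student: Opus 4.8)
The plan is to derive both inequalities from the same two-move argument: use the triangle inequality to produce a point that is ``far'' in the appropriate metric, and then feed the resulting ratio into the quasisymmetry inequality \eqref{eq:qs}, exploiting the fact that a homeomorphism $\eta$ of $[0,\infty)$ is increasing with $\eta(0)=0$ (and $\eta(t)\to\infty$ as $t\to\infty$). I would first dispose of the degenerate cases: if $\diam_X(A)=0$ then $f(A)$ is a single point, the upper bound reads $0\le 0$, and the lower bound is vacuous since the argument of $\eta$ is $\infty$; so I may assume $\diam_X(A),\diam_X(B)>0$. Since diameters are suprema that need not be attained, I would fix $\epsilon>0$, choose points realizing the relevant diameters up to $\epsilon$, and let $\epsilon\to 0$ at the end.

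For the upper bound I would pick $a_1,a_2\in A$ with $d_Y(f(a_1),f(a_2))\ge \diam_Y(f(A))-\epsilon$ and $b_1,b_2\in B$ with $d_X(b_1,b_2)\ge \diam_X(B)-\epsilon$. By the triangle inequality in $X$, one of $b_1,b_2$, call it $b^\ast$, satisfies $d_X(a_1,b^\ast)\ge \tfrac12 d_X(b_1,b_2)$. Applying \eqref{eq:qs} with $(x,y,z)=(a_1,a_2,b^\ast)$, and using $d_X(a_1,a_2)\le \diam_X(A)$ together with monotonicity of $\eta$, gives $d_Y(f(a_1),f(a_2)) \le \eta\big(\tfrac{2\diam_X(A)}{\diam_X(B)-\epsilon}\big)\, d_Y(f(a_1),f(b^\ast))$, and $d_Y(f(a_1),f(b^\ast))\le \diam_Y(f(B))$ because $a_1,b^\ast\in B$. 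Taking the supremum over $a_1,a_2$ and then $\epsilon\to 0$ yields the right-hand inequality.

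For the lower bound I would first record the ``reversed'' form of \eqref{eq:qs}, obtained by swapping $y$ and $z$ and taking reciprocals: $\tfrac{d_Y(f(x),f(y))}{d_Y(f(x),f(z))}\ge \eta\big(\tfrac{d_X(x,z)}{d_X(x,y)}\big)^{-1}$. Then I would pick $b_1,b_2\in B$ with $d_Y(f(b_1),f(b_2))\ge \diam_Y(f(B))-\epsilon$ and $a_1,a_2\in A$ with $d_X(a_1,a_2)\ge \diam_X(A)-\epsilon$; by the triangle inequality in $Y$, one of $b_1,b_2$, call it $b^\ast$, has $d_Y(f(a_1),f(b^\ast))\ge \tfrac12 d_Y(f(b_1),f(b_2))$. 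Applying the reversed inequality with $(x,y,z)=(a_1,a_2,b^\ast)$, using $d_X(a_1,b^\ast)\le \diam_X(B)$ and monotonicity of $\eta$, gives $\diam_Y(f(A))\ge d_Y(f(a_1),f(a_2)) \ge \tfrac12\,\eta\big(\tfrac{\diam_X(B)}{\diam_X(A)-\epsilon}\big)^{-1}\big(\diam_Y(f(B))-\epsilon\big)$; letting $\epsilon\to 0$ and dividing by $\diam_Y(f(B))$ gives the left-hand inequality.

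I do not expect a genuine obstacle here, since this is a classical property of quasisymmetric maps; the only things to watch are the bookkeeping with $\epsilon$ (diameters are not attained in general), keeping careful track of which side of a diameter each distance lies on so that the monotonicity of $\eta$ is invoked in the correct direction, and dispatching the vanishing-diameter corner cases at the outset.
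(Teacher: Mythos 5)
Your argument is correct and is essentially the standard proof of this fact (the paper itself does not prove \pref{lem:diams} but cites \cite[\S 11]{Heinonen01}, where the same two-move argument appears): use the triangle inequality to find a point of $B$ at distance at least $\tfrac12\diam$ from a chosen point, feed the resulting ratio into \eqref{eq:qs} (or its reciprocal form) and use monotonicity of $\eta$, with the $\epsilon$-approximation of non-attained diameters and the degenerate cases handled exactly as you describe. No gaps; the constants $2$ in both bounds come out as in the statement.
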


\begin{lemma}[{\cite[Lem. 2.5]{Tyson01}}]
\label{lem:quasiballs}
If $f : X \to Y$ is $\eta$-quasisymmetric and $S$ is a $\tau$-quasi-ball in $X$, then
	$f(S)$ is a $2 \eta(\tau)$-quasi-ball in $Y$.
\end{lemma}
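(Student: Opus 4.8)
The plan is to use $f(x)$ as the center of the image quasi-ball, where $x$ is a center witnessing that $S$ is a $\tau$-quasi-ball, and to build the two concentric balls of $Y$ by hand. Fix $x\in X$ and $r>0$ with $B_X(x,r)\subseteq S\subseteq B_X(x,\tau r)$; as in all applications here, take $f$ to be a surjection (equivalently, a homeomorphism $X\to Y$ --- a quasisymmetric bijection between these spaces is automatically a homeomorphism), surjectivity being genuinely needed for $f(S)$ to contain a ball of $Y$ at all. Set
\[
   \rho_{\mathrm{in}} \seteq \inf\{ d_Y(f(x),f(w)) : w\in X,\ d_X(x,w)>r\},\qquad
   \rho_{\mathrm{out}} \seteq \sup\{ d_Y(f(x),f(y)) : y\in S\}.
\]
Since $B_X(x,r)$ is closed, its complement is the open set $\{w:d_X(x,w)>r\}$, whose $f$-image is exactly $Y\setminus f(B_X(x,r))$; and $f$ being open, $f(B_X(x,r))$ is an open neighborhood of $f(x)$. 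Hence $\rho_{\mathrm{in}}>0$, the open ball of radius $\rho_{\mathrm{in}}$ about $f(x)$ lies in $f(B_X(x,r))\subseteq f(S)$, and therefore the \emph{closed} ball $B_Y(f(x),\rho_{\mathrm{in}}/2)$ lies in $f(S)$. On the other side, $f(S)\subseteq B_Y(f(x),\rho_{\mathrm{out}})$ by definition of $\rho_{\mathrm{out}}$.

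The heart of the matter is a single use of quasisymmetry to compare the two radii. For every $y\in S$ and every $w$ with $d_X(x,w)>r$ one has $d_X(x,y)\le\tau r<\tau\,d_X(x,w)$, so, using that $\eta$ is an increasing homeomorphism of $[0,\infty)$,
\[
   \frac{d_Y(f(x),f(y))}{d_Y(f(x),f(w))} \;\le\; \eta\!\left(\frac{d_X(x,y)}{d_X(x,w)}\right) \;\le\; \eta(\tau).
\]
Taking the supremum over $y\in S$ and then the infimum over admissible $w$ gives $\rho_{\mathrm{out}}\le\eta(\tau)\,\rho_{\mathrm{in}}$ (and $\rho_{\mathrm{out}}<\infty$, by using any one admissible $w$). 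Combined with the two inclusions above,
\[
   B_Y\!\left(f(x),\tfrac12\rho_{\mathrm{in}}\right)\;\subseteq\; f(S)\;\subseteq\; B_Y\!\left(f(x),\rho_{\mathrm{out}}\right)\;\subseteq\; B_Y\!\left(f(x),\,2\eta(\tau)\cdot\tfrac12\rho_{\mathrm{in}}\right),
\]
exhibiting $f(S)$ as a $2\eta(\tau)$-quasi-ball centered at $f(x)$.

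I expect the one real subtlety --- and the reason the constant is $2\eta(\tau)$ rather than $\eta(\tau)$ --- to be the passage from the open ball that quasisymmetry naturally places inside $f(S)$ to the closed ball required by the paper's definition of quasi-ball; halving the radius is the cheapest remedy. Two degenerate situations must be disposed of directly: if no $w$ with $d_X(x,w)>r$ exists, then $X=B_X(x,r)\subseteq S$, so $S=X$, $f(S)=Y$, and $Y$ is a $1$-quasi-ball about any point (it is bounded, being the quasisymmetric image of the bounded set $X$); and the case $|X|\le 1$ is trivial. Everything else is elementary point-set topology, so I do not anticipate any further obstacle.
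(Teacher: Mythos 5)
Your argument is correct, and there is in fact no in-paper proof to compare it against: the paper quotes this statement directly from \cite[Lem.~2.5]{Tyson01}, so your write-up serves as a complete elementary substitute. Two small remarks. First, the surjectivity caveat you raise is genuine --- as stated the lemma needs $f$ onto (or quasi-balls read inside $f(X)$), and your convention matches the paper's only use of the lemma, in the proof of \pref{thm:qs-invariant}, where $f$ is a quasisymmetric bijection (hence, as you say, a homeomorphism, since $f^{-1}$ is again quasisymmetric). Second, one sentence is literally off: $B_X(x,r)$ is a \emph{closed} ball, so ``$f$ being open, $f(B_X(x,r))$ is an open neighborhood of $f(x)$'' is not quite right as written; what you need, and all you need, is that $f(B_X(x,r))$ contains the $f$-image of the open ball $\{w : d_X(x,w)<r\}$, which is open and contains $f(x)$, so that injectivity forces $d_Y(f(x),f(w))$ to be bounded below uniformly over all $w$ with $d_X(x,w)>r$, giving $\rho_{\mathrm{in}}>0$. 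With that wording repaired, the quasisymmetry estimate $\rho_{\mathrm{out}}\le\eta(\tau)\,\rho_{\mathrm{in}}$ and the sandwich $B_Y(f(x),\rho_{\mathrm{in}}/2)\subseteq f(S)\subseteq B_Y(f(x),2\eta(\tau)\cdot\rho_{\mathrm{in}}/2)$ exhibit $f(S)$ as a $2\eta(\tau)$-quasi-ball exactly as claimed, the halving being a clean way to pass from the open inner ball to the closed one required by the definition; your treatment of the degenerate case $X=B_X(x,r)$ (so $f(S)=Y$ is bounded and hence a closed ball about any of its points) is also fine.
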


The main result of this section is that, for uniformly perfect spaces,
if $X$ and $Y$ are quasisymmetrically equivalent, then the
classes of graphs that quasi-pack into $X$ and $Y$ coincide.

\begin{theorem}\label{thm:qs-invariant}
   Suppose $(X,d_X)$ and $(Y,d_Y)$ are quasisymmetrically equivalent
   uniformly perfect spaces.  Then there is a constant $K \geq 1$
   such that a locally finite graph $G$ is
   $(\tau,M)$-quasi-packed in $(X,d_X)$ if and only if it is $(\tau',M)$-quasi-packed in $(Y,d_Y)$,
   and moreover $K^{-1} \tau \leq \tau' \leq K \tau$.
\end{theorem}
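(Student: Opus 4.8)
The plan is to push a quasi-packing through the quasisymmetric bijection and check that both defining conditions \eqref{eq:coarse-tan} and \eqref{eq:bdd-mult} survive, with all parameters controlled by $\tau$. First I would reduce to a single implication. Fix an $\eta$-quasisymmetric bijection $f : X \to Y$; a standard fact ensures $f^{-1}$ is $\eta'$-quasisymmetric for some $\eta'$ depending only on $\eta$, and since $(X,d_X)$ and $(Y,d_Y)$ are uniformly perfect one may moreover take $\eta$ and $\eta'$ of power type (\cite[\S 11]{Heinonen01}). It then suffices to show: if $G$ is $(\tau,M)$-quasi-packed in $X$ via $\Phi : V(G) \to \cB_\tau$, then $\Psi \seteq f \circ \Phi$ exhibits $G$ as $(\tau',M)$-quasi-packed in $Y$ for a suitable $\tau'$; the converse implication and the two-sided comparison of parameters then follow by applying the same statement to $f^{-1}$, using that membership in $\cB_\tau$ and the conditions \eqref{eq:coarse-tan}, \eqref{eq:bdd-mult} all only weaken as the parameter grows (so enlarging $\tau'$ is harmless). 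That each $\Psi(v) = f(\Phi(v))$ is a quasi-ball in $Y$ of parameter $2\eta(\tau)$ is immediate from \pref{lem:quasiballs}, so $\Psi$ maps into $\cB_{\tau'}$ once $\tau' \ge 2\eta(\tau)$; moreover $M$ is inherited with no loss, since $f$ is a bijection and the quantities in \eqref{eq:bdd-mult} are merely cardinalities of vertex sets.

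For quasi-tangency I would use two auxiliary balls, one at the scale of the smaller quasi-ball and one at the scale of the larger. Given an edge $\{u,v\}$ with $\diam_X \Phi(u) \le \diam_X \Phi(v)$, condition \eqref{eq:coarse-tan} in $X$ yields $x_0 \in \Phi(u)$ and $b \in \Phi(v)$ with $d_X(x_0,b) \le (\tau+1)\diam_X\Phi(u)$. The ball $Z \seteq B_X\bigl(x_0,(2\tau+2)\diam_X\Phi(u)\bigr)$ then contains $\Phi(u)$ and the point $b$, while $N \seteq B_X\bigl(x_0,(\tau+2)\diam_X\Phi(v)\bigr)$ contains $\Phi(u)\cup\Phi(v)$, and in particular the inscribed ball $B_X(x_v,r_v)\subseteq\Phi(v)$, whose radius $r_v$ is comparable to $\diam_X\Phi(v)$ up to a factor depending only on $\tau$ and the uniform-perfectness constant (since $B_X(x_v,r_v)\subseteq\Phi(v)\subseteq B_X(x_v,\tau r_v)$). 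Pushing $Z$ and $N$ forward by $f$ (quasi-balls, by \pref{lem:quasiballs}) and applying \pref{lem:diams} to the nestings $\Phi(u)\subseteq Z$ and $B_X(x_v,r_v)\subseteq N$ — both of diameter ratio bounded by a polynomial in $\tau$ — gives $\diam_Y\Psi(u) \ge \bigl(2\eta(C\tau)\bigr)^{-1}\diam_Y f(Z)$ and $\diam_Y\Psi(v) \ge \bigl(2\eta(C\tau)\bigr)^{-1}\diam_Y f(N)$ for a suitable constant $C$. Since $\{x_0,b\}\subseteq Z\subseteq N$, we get $d_Y(\Psi(u),\Psi(v)) \le d_Y(f(x_0),f(b)) \le \min\{\diam_Y f(Z), \diam_Y f(N)\}$, so \eqref{eq:coarse-tan} holds for $\Psi$ with parameter $2\eta(C\tau)$.

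The main obstacle is the quasi-multiplicity condition \eqref{eq:bdd-mult} for $\Psi$, because it couples a genuine ball $B_Y(y,r)$ with the diameter threshold $\tau' r$, whereas $f$ only controls ratios of distances. Fix $y\in Y$ and $r\ge 0$, and set $x \seteq f^{-1}(y)$. The preimage $A \seteq f^{-1}(B_Y(y,r))$ is a quasi-ball in $X$ by \pref{lem:quasiballs} applied to $f^{-1}$, and — from that lemma's proof, or by applying the quasisymmetry of $f^{-1}$ at the point $x$ (comparing a point with $d_Y(y,\cdot)\le r$ against one slightly farther out) — it may be taken centered at $x$, say $B_X(x,a)\subseteq A\subseteq B_X(x,\tau_0 a)$ with $\tau_0 = 2\eta'(1)$. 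Every $v$ counted in \eqref{eq:bdd-mult} for $\Psi$ has a point $p_v\in\Phi(v)$ with $f(p_v)\in B_Y(y,r)$, \ie $p_v\in A\subseteq B_X(x,\tau_0 a)$, so $\Phi(v)\cap B_X(x,\tau_0 a)\ne\emptyset$. It remains to force $\diam_X\Phi(v)\ge\tau\,(\tau_0 a)$, for then \eqref{eq:bdd-mult} for $\Phi$, applied at the ball $B_X(x,\tau_0 a)$, bounds the count by $M$. If instead $\diam_X\Phi(v)<\tau\tau_0 a$, set $A' \seteq B_X(x,(\tau+1)\tau_0 a)$; then $\Phi(v)\subseteq A'$, so $\Psi(v)\subseteq f(A')$. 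Since $A\subseteq A'$ with $\diam_X A'/\diam_X A$ bounded by a constant depending on $\tau$ and the uniform-perfectness constant, \pref{lem:diams} gives $\diam_Y\Psi(v)\le\diam_Y f(A')\le 2\eta(C'\tau)\,\diam_Y f(A) \le 4r\,\eta(C'\tau)$, using $f(A)=B_Y(y,r)$. Choosing $\tau'>4\eta(C'\tau)$ then contradicts $\diam_Y\Psi(v)\ge\tau' r$, as needed. The delicate point is calibrating $\tau'$: it must dominate the several $\eta(\mathrm{poly}(\tau))$-type bounds arising above — for $\cB_{\tau'}$-membership, for \eqref{eq:coarse-tan}, and for the implication just described — while still lying in the window $K^{-1}\tau\le\tau'\le K\tau$ asserted in the theorem; this is exactly where the power-type form of $\eta$ (available because $X$ and $Y$ are uniformly perfect) is used, together with the monotonicity of the three defining conditions in the parameter.
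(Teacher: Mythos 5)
Your proof is correct and follows essentially the same route as the paper: push the packing forward through the quasisymmetric bijection, use \pref{lem:quasiballs} for quasi-ball membership, \pref{lem:diams} together with uniform perfectness for quasi-tangency, and transfer the ball $B_Y(y,r)$ back to a comparable ball in $X$, forcing the diameter threshold by taking $\tau'$ large, for quasi-multiplicity. Two immaterial remarks: the inclusion $Z \subseteq N$ you assert in passing is false in general but unused (you only need $\{x_0,b\}$ to lie in each of $Z$ and $N$ separately), and the literal window $K^{-1}\tau \le \tau' \le K\tau$ is not really delivered by your power-type-$\eta$ remark—but the paper's own proof likewise only produces $\tau'$ controlled by $\eta(\mathrm{poly}(\tau))$, so this looseness is shared with the source.
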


\begin{proof}
   Let $f : X \to Y$ be an $\eta$-quasisymmetric bijection.
   Since $f$ is a bijection, we will assume that $X=Y$.
   We use $B_X$ and $B_Y$ to denote balls in the metrics $d_X$ and $d_Y$, respectively.
   Assume that $(X,d_X)$ is uniformly perfect with constant $\lambda \geq 1$.

   Suppose that $G$ is $(\tau,M)$-quasi-packed in $(X,d_X)$, and let
   $\Phi : V(G) \to \cB_{\tau}$ denote a mapping that verifies \eqref{eq:coarse-tan}
   and \eqref{eq:bdd-mult}.
	Our goal is to establish that $f \circ \Phi$ witnesses a $(\tau',M)$-quasi-packing in $(Y,d_Y)$
	for some $\tau' \geq 1$.
	For every $v \in V(G)$, let $(x_v, r_v)$ be such that
	$B_X(x_v, r_v) \subseteq \Phi(v) \subseteq B_X(x_v, \tau r_v)$.

\medskip
\noindent
{\bf Quasi-tangency.}
	Consider $\{u,v\} \in E(G)$ and suppose that $\diam_Y(\Phi(u)) \geq \diam_Y(\Phi(v))$.
	Observe that
	\eqref{eq:coarse-tan} implies there is a $z \in \Phi(u) \cap B_X(x_v, 2 \tau^2 r_v)$.
   Thus \pref{lem:diams} gives
	\begin{align*}
		d_Y(\Phi(u),\Phi(v)) &\leq \diam_Y(B_X(x_v, 2\tau^2 r_v)) \\ 
&\leq
\diam_Y(\Phi(v)) \cdot 2\eta\left(\frac{4\tau^2 r_v}{\diam_X(\Phi(v))}\right)\\
&\leq
\diam_Y(\Phi(v)) \cdot 2\eta\left(4\lambda\tau^2\right)\,,
	\end{align*}
where the second inequality employs \pref{lem:diams},
and in the last inequality we have used that $X$ is uniformly perfect.
Employing \pref{lem:quasiballs},
we have thus verified that \eqref{eq:coarse-tan} holds for $f \circ \Phi$ with
$\tau'=2 \eta(4\lambda\tau^2)$.

\medskip
\noindent
{\bf Quasi-multiplicity.}
Consider now some $x' \in Y$, $r' > 0$, and a subset $S \subseteq V(G)$
such that $B_Y(x',r') \cap \Phi(v) \neq \emptyset$ and
$\diam_Y(\Phi(v)) \geq \tau' r'$ for all $v \in S$.

Let $D \seteq \diam_X(B_Y(x',r'))$.
Fix $v \in S$ and $z \in B_Y(x',r') \cap \Phi(v)$.
Choose $z' \in \Phi(v)$ such that $d_Y(z,z') \geq \tau' r'/2$.
Choose $z'' \in B_Y(x',r')$ so that $d_X(z,z'') \geq D/2$.
Note that $f^{-1}$ is $\eta'$-quasisymmetric with $\eta'(t)=1/\eta^{-1}(1/t)$, therefore
from \eqref{eq:qs}:
\begin{equation}\label{eq:almostthere}
   \frac{\diam_X(B_Y(x',r'))}{\diam_X(\Phi(v))} \leq
   \frac{\diam_X(B_Y(x',r'))}{d_X(z,z')} \leq
   2 \frac{d_X(z,z'')}{d_X(z,z')} \leq \eta'\left(\frac{d_Y(z,z'')}{d_Y(z,z')}\right) \leq \eta'\left(\frac{4}{\tau'}\right)\,.
\end{equation}

Choose $\tau'$ large enough so that $\eta'(4/\tau') \leq 1/\tau$.
Let $r \seteq \diam_X(B_Y(x',r'))$ and fix any $x \in B_Y(x',r')$.
By construction, $B_X(x,r) \cap \Phi(v) \neq \emptyset$ for every $v \in S$.
By \eqref{eq:almostthere} and our choice of $\tau'$, we have
\[
   \diam_X(\Phi(v)) \geq \tau r  \quad \forall v \in S\,.
\]
Applying the quasi-multiplicity condition \eqref{eq:bdd-mult} to $\Phi$,
we see that $|S| \leq M$.
We have thus verified that \eqref{eq:bdd-mult} holds also for $f \circ \Phi$
with $\tau'$ is chosen appropriately.
\end{proof}

\section{Discrete conformal metrics on $d$-dimensional graphs}
\label{sec:sphere-packings}

We first state the main technical result of this section.
Recall the definition \[d_* \seteq \max(d,2)\,.\]

\begin{theorem}\label{thm:finite-sphere-packing}
   For every $d,\tau,M \geq 1$ and $c_1,c_2 > 0$,
   there is a number $C \geq 1$ such that the following holds.
   Suppose $G=(V,E)$ is a finite graph that is $(\tau,M)$-quasi-packed in a
   $(c_1,c_2,d)$-regular space $\cX$.
   Then for every $R \geq 0$, there is a conformal weight $\omega : V \to \R_+$
   that satisfies
   \begin{equation}\label{eq:norm-weight}
      \frac{1}{|V|} \sum_{x \in V} \omega(x)^{d_*} = 1\,,
   \end{equation}
   and 
   such that 
   \begin{equation}\label{eq:desired25}
      \max_{x \in V} |B_{\omega}(x,R)| \leq C R^{d_*}\,.
   \end{equation}
\end{theorem}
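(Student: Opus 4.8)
The plan is to build $\omega$ from the natural ``ball weight'' $v \mapsto \diam\Phi(v)$ of the quasi-packing, \emph{inflated by a multi-scale crowding floor} read off from the dyadic cubes of \pref{thm:ensemble}; the point that lets the exponent be exactly $d_*$ (rather than $d_*$ with a polylogarithmic correction, as in \pref{thm:finite-sphere-packing-intro-0}) is that $\omega$ is allowed to depend on $R$, so the construction may be ``tuned'' to resolution $R$.

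\textbf{Setup and reductions.} First I would fix, for each $v\in V$, a center--radius pair $(x_v,r_v)$ with $B(x_v,r_v)\subseteq\Phi(v)\subseteq B(x_v,\tau r_v)$, and attach to $v$ via \pref{thm:ensemble} a dyadic cube $Q(v)$ (from one of the finitely many hierarchical systems) with $\Phi(v)\subseteq Q(v)$ and $\diam Q(v)$ comparable to $\diam\Phi(v)$ up to a factor depending only on $\tau,\Delta,\ell$; partitioning $V$ according to which system supplies $Q(v)$ reduces matters to a single system. Since quasi-packing is scale invariant, I would rescale $\cX$ so that $D \seteq \diam\big(\overline{\bigcup_{v\in V}\Phi(v)}\big)$ equals $n^{1/d_*}$. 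I would record two consequences of Ahlfors $(c_1,c_2,d)$-regularity and the pointwise multiplicity bound \eqref{eq:quasi-mult}: (i) since $\mu(\Phi(v))\ge c_1 r_v^{\,d}$ and the inner balls $B(x_v,r_v)$ have multiplicity $\le M$, one gets $\sum_{v\in V}\diam\Phi(v)^d = O_{c_1,c_2,\tau,M}(D^d)$, and hence (using $\diam\Phi(v)\le D$ and $d_*\ge d$) also $\sum_{v\in V}\diam\Phi(v)^{d_*}=O(D^{d_*})$; and (ii) by a variant of \pref{lem:touching}, for every $y\in X$, $\rho>0$, $\epsilon\in(0,2\rho]$, the number of $v$ with $\Phi(v)\cap B(y,\rho)\neq\emptyset$ and $\diam\Phi(v)\ge\epsilon$ is $O\big((\rho/\epsilon)^d\big)$. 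Likewise, for a dyadic cube $P$ let $N(P)$ count the vertices $v$ with $Q(v)\subseteq P$; the cubes of a fixed level are disjoint, so $\sum_{P:\ \mathrm{level}(P)=k}(\diam P)^d = O(D^d)$ for each $k$.

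\textbf{The weight and the ball bound.} With a truncation level $k^*=k^*(R)$ to be chosen, I would set
\[
   \omega_0(v)^{d_*}\ \seteq\ \diam\Phi(v)^{d_*}\ +\ \sum_{\substack{P\supseteq Q(v)\\ \mathrm{level}(P)\le k^*}}\ a_P\,\frac{(\diam P)^{d_*}}{N(P)}\,,
\]
where the $a_P>0$ are positive coefficients that decay geometrically in $\mathrm{level}(P)-\mathrm{level}(Q(v))$, so that the inner sum over scales is a convergent geometric series. Exchanging order of summation, each cube $P$ is counted $N(P)$ times, so $\sum_v \omega_0(v)^{d_*}\le \sum_v\diam\Phi(v)^{d_*} + \sum_{P:\ \mathrm{level}(P)\le k^*}a_P(\diam P)^{d_*} = O(D^{d_*})$, using (i) and $\sum_{P:\ \mathrm{level}(P)=k}(\diam P)^{d_*}=O(D^{d_*})$ together with the geometric decay; with $D=n^{1/d_*}$ this gives $L\seteq\big(\tfrac1n\sum_v\omega_0(v)^{d_*}\big)^{1/d_*}=O(1)$, and $L\ge$ (the $\diam\Phi$ part alone, which one checks is bounded below), so $\omega\seteq\omega_0/L$ satisfies \eqref{eq:norm-weight} and $\omega(v)\ge c\,\diam\Phi(v)$ for a constant $c>0$. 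To prove \eqref{eq:desired25}, fix $x$ and $R\ge 1$ (the case $R<1$ being trivial). Because $\omega(v)\ge c\,\diam\Phi(v)$ and \eqref{eq:coarse-tan} holds, any $u$--$v$ path of $\omega$-length $\le R$ keeps $\dist(\Phi(u),\Phi(v))=O_\tau(R)$ (sum the diameters and quasi-tangency gaps along the path, after discarding the case where a single edge out of $x$ already costs more than $R$), so $B_\omega(x,R)\subseteq V'\seteq\{v:\dist(\Phi(v),\Phi(x))\le C_0 R\}$. I would then classify $v\in V'$ by which term of $\omega_0(v)$ dominates: when the ball term dominates, $\omega(v)=\Theta(\diam\Phi(v))$, spheres of diameter $\gg R$ are excluded outright, and (ii) plus a dyadic sum over scales in $(0,\Theta(R)]$ bounds their contribution by $O(R^d)=O(R^{d_*})$; when a floor term at some cube $P$ dominates, $\omega(v)=\Theta\big(\diam P/N(P)^{1/d_*}\big)$, and since a path collecting $j$ such vertices has $\omega$-length $\Omega\big(j\,\diam P/N(P)^{1/d_*}\big)$, the cube $P$ contributes $O\big(1+R\,N(P)^{1/d_*}/\diam P\big)$ vertices; summing over the cubes meeting $V'$ at each relevant level (of which there are $O\big((R/\Delta^{\mathrm{level}})^d\big)$), using $\sum_P N(P)\le n$ and Hölder, and then over the truncated range of levels, one chooses $k^*(R)$ and the decay rate of the $a_P$ so that the total telescopes to $O(R^{d_*})$. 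Absorbing $L=O(1)$ into the constant yields \eqref{eq:desired25}.

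\textbf{Main obstacle.} The crux is precisely the last estimate: controlling how many vertices the ball $B_\omega(x,R)$ can collect from arbitrarily deeply nested, arbitrarily dense clusters of small spheres, where the ball weight is useless. Ahlfors regularity and bounded multiplicity control each individual scale, but a naive sum of the per-scale counts loses a factor logarithmic in the number of scales present; the work is in choosing the crowding coefficients $a_P$ and the truncation level $k^*(R)$ so that the floor inflates the weights in crowded regions by exactly enough to make navigation there costly (so that a bounded-$\omega$-budget path cannot reach too many of them) while remaining cheap enough for the normalization. This is also exactly where the freedom to let $\omega$ depend on $R$ is used --- truncating at the scale corresponding to resolution $R$ removes the logarithm --- and why \pref{thm:finite-sphere-packing-intro-0}, which demands a single $R$-independent metric, provably cannot avoid a polylogarithmic correction.
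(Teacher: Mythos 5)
Your plan shares the paper's general philosophy (dyadic cubes from \pref{thm:ensemble}, a multi-scale inflation of the natural weight $v\mapsto\diam\Phi(v)$, an $R$-dependent truncation), but the step you yourself flag as ``the main obstacle'' is exactly where the construction, as specified, breaks, and the mechanism that would fix it is missing. The crowding floor $a_{P}\,(\diam P)^{d_*}/N(P)$ with $a_P$ decaying geometrically in $\mathrm{level}(P)-\mathrm{level}(Q(v))$ cannot do the job: the decay is forced on you by the normalization (without it, $\sum_{P}a_P(\diam P)^{d_*}$ is a sum of $\Theta(D^{d_*})$ per nonempty level over an uncontrolled number of levels, since sphere radii may be arbitrarily small relative to $D$), but the same decay suppresses the floor precisely in the deep-nesting regime. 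Concretely, take $N$ spheres of diameter $\delta$ densely packed in a sub-ball of a much larger configuration, with $\delta$ tiny after your rescaling: every ancestor cube of scale $\Delta^{j}\gg\delta$ contributes floor weight only $\theta^{(j-\mathrm{lev}(\delta))/d_*}\Delta^{j}/N(P)^{1/d_*}$, which is negligible, so these vertices keep weight $\Theta(\delta)$ and $B_\omega(x,R)$ contains $\min\{N,(R/\delta)^d\}\gg R^{d_*}$ of them. So no choice of decay rate and $k^*(R)$ makes \eqref{eq:desired25} come out of this weight; the tension between \eqref{eq:norm-weight} and the needed inflation is the theorem, and the proposal defers it (``one chooses \dots so that the total telescopes''). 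Two further steps are also unsound as written: the bound ``cube $P$ contributes $O(1+R\,N(P)^{1/d_*}/\diam P)$ vertices'' is a statement about a single path, not about a ball (many heavy vertices can all be adjacent to one light vertex, so they all lie within distance $\approx$ their own weight of $x$); and in the base-term case the ``dyadic sum over scales in $(0,\Theta(R)]$'' does not give $O(R^d)$, since the per-scale count from \eqref{eq:bdd-mult} is $O((R/\epsilon)^d)$ and diverges as $\epsilon\to0$. (Minor additional points: your bound $\sum_{P:\,\mathrm{level}(P)=k}(\diam P)^d=O(D^d)$ uses an inner-ball property of the cubes not stated in \pref{thm:ensemble}, and the claim that the $\diam\Phi$ part of the normalization is bounded below is false, though unneeded.)

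What the paper does differently, and what your sketch lacks, is a quantitative substitute for ``total count in a cube.'' The inflation at a cube-scale pair $(C,n)$ is keyed not to $N(C)$ but to the \emph{level} $\level_{\bm P}(C,n)$ --- the amount of new mass seen at that scale relative to its children --- and the Benjamini--Schramm-type flow argument (\pref{lem:BSmagic}) shows that at most $O(|V|/2^{j})$ pairs have level $j$. This is what lets the weight \eqref{eq:theta-def} inflate by the full factor $\Delta^{-n}2^{j/d_*}$ (capped by $1/\omega_0$, with no suppression for deeply nested small spheres) while paying only $\sum_j 2^j\cdot\frac{|V|}{2^j}(1+k-j)^{-2}=O(|V|)$ in normalization. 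The growth bound is then not a per-cube count at all: given $2^k$ vertices, one builds a nested chain of cubes whose levels collect the mass, shows every path between two designated vertices must cross the disjoint annuli $N(C_i,2\Delta^{n_i})\setminus N(C_i,\Delta^{n_i}/4)$ (\pref{lem:help1}, \pref{lem:help2}), and sums the crossing costs; the concavity estimate (\pref{cor:helper5}) shows the geometric gain $2^{(k-i)/d_*}$ across scales beats the polynomial penalty $(1+i)^{-2/d_*}$, yielding a clean $2^{k/d_*}$ with no logarithmic loss. Without an analogue of the level-counting lemma and the two-point annulus-crossing argument, the proposal does not close.
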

Combining this with \pref{thm:QCG-limits} yields \pref{thm:intro-scaled}.

\subsection{Properties of quasi-packings}
\label{sec:coarse-packings}

Suppose that $G$ is $(\tau,M)$-quasi-packed in a $(c_1,c_2,d)$-regular space $(X,\dist,\mu)$
for some $d,\tau,M \geq 1$ and $c_1,c_2 > 0$.
Let $\{ S_v : v \in V(G)\}$ denote a family of $\tau$-quasi-balls in $X$ that
satisfy \eqref{eq:coarse-tan} and \eqref{eq:bdd-mult}.
We now collect all the properties we will require of such a ``packing''
in proving the main theorem.

Throughout this section
and the next, we will use the asymptotic notation $A \lesssim B$
to denote that $A \leq C\cdot B$ for some constant $C$ that depends only the
parameters $d,c_1,c_2,\tau,M$.
We use $A \asymp B$ to denote the conjunction of $A \lesssim B$ and $B \lesssim A$.
\begin{enumerate}
   \item
For every $v \in V(G)$,
\begin{equation}\label{eq:regular}
   \diam(S_v)^d \asymp \mu(S_v)\,.
\end{equation}
This follows immediately from the definition of $(c_1,c_2,d)$-regular.
\item
For every $x \in X$,
\[
	\# \{ v \in V(G) : x \in S_v \} \lesssim 1\,.
\]
This follows from \eqref{eq:bdd-mult} with $r=0$.
\item
   For every $\{u,v\} \in E(G)$ and $x \in S_u, y \in S_v$:
   \begin{equation}\label{eq:set-dist}
      \dist(x,y) \lesssim \diam(S_u) + \diam(S_v)\,.
   \end{equation}
   This follows immediately from \eqref{eq:coarse-tan}.
\item
   Consider a Borel set $Y \subseteq X$.
   It holds that
   \begin{equation}\label{eq:vmult}
      \sum_{v \in V(G) : S_v \subseteq Y} \mu(S_v) \lesssim \mu(Y) \lesssim \diam(Y)^d\,.
   \end{equation}
The first inequality follows from (2) and the second from Ahlfors regularity.
\item For any $\lambda \geq 1$, there is a number $C=C(\lambda,c_1,c_2,d,\tau)$
such that for all $x \in X$ and $r > 0$,
   \begin{equation}\label{eq:touching}
      \# \left\{v \in V(G) : \diam(S_v) \geq r \textrm{ and } \dist(x,S_v) \leq \lambda r \right\}
\lesssim C\,.
   \end{equation}
We derive this from \eqref{eq:bdd-mult} as follows.
Cover $B(x, \lambda r)$ by balls $B_1, B_2, \ldots, B_{C'}$ of radius $r/\tau$, where $C'=C'(c_1,c_2,d,\tau,\lambda)$.
Now apply \eqref{eq:bdd-mult} to each $B_i$ separately to obtain \eqref{eq:touching} with $C \leq C' M$.
\end{enumerate}

\subsection{Discrete uniformization}
\label{sec:conformal-weight}

Our proof of \pref{thm:finite-sphere-packing} is inspired by
the ``isolation lemma'' of Benjamini and Schramm \cite{BS01} (see also \cite{BC11,Gill14}).
Suppose $G=(V,E)$ is sphere-packed in $\R^d$.  
When the spheres $\{S_v : v \in V\}$ in the packing
have comparable radii, the background Euclidean metric provides
a reasonable conformal weight; one sets $\omega(v)$
proportional to the radius of the sphere $S_v$.

\begin{figure}
      \begin{center}
            \subfigure[An isolated accumulation point]{ \includegraphics[width=6cm]{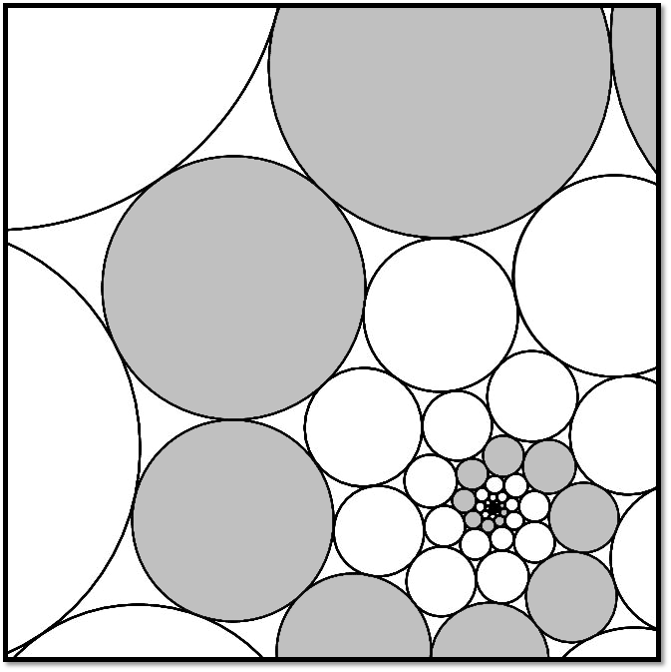}\label{fig:isolated} } \hspace{0.8in}
         \subfigure[A continuum of accumulation points]{  \includegraphics[width=6cm]{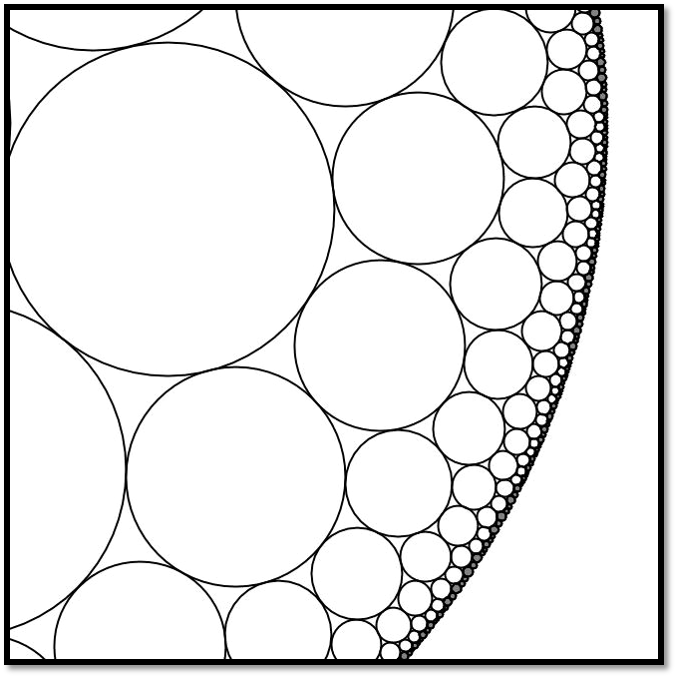}\label{fig:hyperbolic} }
      \caption{Accumulation points}
   \end{center}
      \end{figure}

Difficulties arise when the radii degenerate,
for instance near an accumulation point (in the case of infinite $G$); see, for example,
\pref{fig:isolated}.
But if one imagines an {\em isolated} accumulation point as a cone, then it becomes
rather tame: If we think of it as a metric on $\vvS^{d-1} \times [0,\infty)$, where the $d$th dimension is along
the axis of the cone, then we merely need to do a ``$1$-dimensional uniformization'' along the axis
(this can be seen in the use of the concavity of $x \mapsto x^{1/d}$ in \pref{cor:helper5} below).
It would be problematic if the accumulation points themselves accumulated, e.g.,
as for a circle packing of a triangulation of the hyperbolic plane (e.g., \pref{fig:hyperbolic}).
But the Benjamini-Schramm lemma asserts that this cannot happen for distributional limits of
finite graphs packed in $\R^d$.

   By default, we use the notation $\diam(\cdot)$ to denote the diameter
   in the metric $\dist$.  When we consider another metric,
   it will be explicitly specified.

   \subsubsection{Construction of the conformal weight}
   \label{sec:construction}

   Suppose now that $G=(V,E)$ is a finite graph that is $(\tau,M)$-quasi-packed in $(X,\dist,\mu)$.
	To each $v \in V$, associate a $\tau$-quasi-ball $S_v \subseteq X$ so that
	\pref{sec:coarse-packings}(1)--(5) are satisfied.

   Assume that $k \geq 3$ is given.  We will establish the existence of a metric $\omega : V \to \R_+$
   that satisfies $\frac{1}{|V|} \sum_{x \in V} \omega(x)^d \lesssim 1$ and such that any subset
   $U \subseteq V$ with $|U| = 2^k$ satisfies $\diam_{\omega}(U) \gtrsim 2^{k/d_*}$.
   This suffices to establish \pref{thm:finite-sphere-packing}.

   \medskip

	Identify $v$ with
   an arbitrary point in $S_v$ so that we may consider $V \subseteq X$.
   Define $\omega_0(v) \seteq \mu(S_v)^{1/d}$.
   Then \eqref{eq:regular} gives:
   \begin{equation}\label{eq:omega-compare}
      \diam(S_v) \asymp \omega_0(v)\,.
   \end{equation}

   Let $\bm{P} = \{ \bm{P}_n : n \in \Z \}$ denote a $\Delta$-adic hierarchical system in $X$
(recall \pref{sec:metric-spaces}).
Define \[\hat{\bm{P}} \seteq \left\{\vphantom{\bigoplus} (C,n) : n \in \Z, C \in \bm{P}_n \right\}.\]
   Consider a positive integer $s \lesssim 1$ to be chosen soon.

   \medskip
   \noindent
   {\bf The level of a cube.}
   For a pair $(C,n) \in \hat{\bm{P}}$, define
   \[
      \level_{\bm{P}}(C,n) \seteq \max \left\{ j \in \N :
      |(V \cap C) \setminus C')| \geq 2^j  \textrm{ for all } C' \in \bm{P}_{n-s}\right\}\,.
   \]
   The relevance of this definition is as follows.  If $\level_{\bm{P}}(C,n)=j$, then we are
   witnessing a ``feature'' of size $\approx 2^j$ that will not be fully seen by
   any cube at any lower scale.  (For technical reasons, we actually shift by $s$ scales, but
   $s \lesssim 1$.)

   Thus we need to ``uniformize'' this feature at the current scale.
   Since we are trying to ensure $d$-dimensional volume growth,
   it should not be that this set of $2^j$ points is contained
   in a set of $\dist_{\omega}$-diameter significantly less than $2^{j/d}$ (for $d \geq 2$).

   Let us first present a heuristic analysis.
   Suppose we consider a cube $C \in \bm{P}_n$ of diameter at most $\Delta^n$ and $\level_{\bm{P}}(C,n)=j$.
   Moreover, suppose that for $v \in V \cap C$, it holds that $\omega_0(v) \lesssim \Delta^n$. (This is
   the case of ``small bodies'' in the arguments below; large bodies are handled by a separate argument.)

   Then we should scale the metric $\omega_0$ by $\approx \Delta^{-n} 2^{j/d}$ to ensure
   that we inflate this set to large enough diameter.
   (This is assuming that $\diam(V \cap C) \approx \Delta^n$; if the bulk $V \cap C$
   has much smaller diameter, this feature will be detected at the correct scale
   in some other hierarchical system.)
   Thus we should endow the vertices $v \in V \cap C$ with weight $\omega(v) \geq \beta \omega_0(v)$,
   where $\beta \approx \Delta^{-n} 2^{j/d}$.

   Consider now how much conformal weight we have spent.
   By a simple volume argument \eqref{eq:vmult}, the total $\ell_d$-weight allocated is
   proportional to
   \[
      \Delta^{-nd} 2^{j} \sum_{v \in V \cap C} \omega_0(v)^d
      \lesssim \Delta^{-nd} 2^j \Delta^{nd} \lesssim 2^j\,.
   \]
   Thus if we hope to keep the total $\ell_d$-weight bounded,
   it should be that we cannot detect too many level-$j$ features.
   This is the content of the next lemma which follows \cite[Lem 2.3]{BS01}.

   \begin{lemma}\label{lem:BSmagic}
      For all integers $j \geq 0$,
   \begin{equation}\label{eq:BSmagic}
      \# \left\{ (C,n) \in \hat{\bm{P}} : \level_{\bm{P}}(C,n)=j \right\} \leq \frac{2s|V|}{2^j}\,.
   \end{equation}
   \end{lemma}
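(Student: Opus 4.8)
The plan is to establish \eqref{eq:BSmagic} separately for each congruence class of scales modulo $s$ and then sum the $s$ resulting estimates.

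Fix $j \ge 0$ and $r \in \{0,1,\dots,s-1\}$, and restrict attention to pairs $(C,n) \in \hat{\bm P}$ with $n \equiv r \pmod s$. The first step is to reformulate the level. Since $\bm P_{n-s}$ refines $\bm P_n$, every $C' \in \bm P_{n-s}$ is either contained in $C$ or disjoint from it, so $\min_{C' \in \bm P_{n-s}}|(V\cap C)\setminus C'| = |V\cap C| - |V\cap H(C,n)|$, where $H(C,n)$ denotes a $\bm P_{n-s}$-cube contained in $C$ with the largest intersection with $V$ (if $C$ has no proper refinement in $\bm P_{n-s}$ we set $H(C,n)=C$, making this $0$, so that $\level_{\bm P}(C,n)$ fails to be a nonnegative integer and no such pair contributes below). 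Writing $\delta(C,n) \seteq |V\cap C| - |V\cap H(C,n)|$, we obtain the characterization
\[
   \level_{\bm P}(C,n) = j \iff 2^j \le \delta(C,n) < 2^{j+1}.
\]
Call a vertex $v \in V$ \emph{exposed by $(C,n)$} if $v \in C \setminus H(C,n)$; then $(C,n)$ exposes exactly $\delta(C,n)$ vertices. Since a pair with $\level_{\bm P}(C,n)=j$ exposes at least $2^j$ vertices, double-counting incidences between such pairs and the vertices they expose gives
\[
   2^j\cdot\#\bigl\{(C,n)\in\hat{\bm P}: n\equiv r \pmod s,\ \level_{\bm P}(C,n)=j\bigr\} \ \le\ \sum_{v\in V} N_r(v),
\]
where $N_r(v)$ counts the pairs $(C,n)$ in this congruence class with $\level_{\bm P}(C,n)=j$ that expose $v$.

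The heart of the argument is the claim that $N_r(v) \le 2$ for every $v \in V$. Suppose $v$ were exposed by three such pairs. Since for each $n$ there is a unique $\bm P_n$-cube containing $v$, these pairs have distinct scales, say $n_1 < n_2 < n_3$ (all $\equiv r\pmod s$, hence $n_{i-1} \le n_i - s$), with corresponding cubes $C_1 \subseteq C_2 \subseteq C_3$. For $i\in\{2,3\}$ let $D_i \in \bm P_{n_i-s}$ be the cube containing $v$; then $D_i \subseteq C_i$ and $D_i \ne H(C_i,n_i)$ because $v \notin H(C_i,n_i)$, so $|V\cap D_i| \le |V\cap H(C_i,n_i)|$ and hence $|V\cap C_i| \ge |V\cap H(C_i,n_i)| + |V\cap D_i| \ge 2\,|V\cap D_i|$. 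Also $n_{i-1} \le n_i - s$ forces $C_{i-1} \subseteq D_i$, so $|V\cap D_i| \ge |V\cap C_{i-1}|$. Taking $i=2$ and using $|V\cap C_1| \ge \delta(C_1,n_1) \ge 2^j$ gives $|V\cap C_2| \ge 2^{j+1}$; taking $i=3$ then gives $\delta(C_3,n_3) \ge |V\cap D_3| \ge |V\cap C_2| \ge 2^{j+1}$, contradicting $\delta(C_3,n_3) < 2^{j+1}$. Thus $N_r(v) \le 2$, so the left-hand side of the previous display is at most $2|V|$, and summing over the $s$ congruence classes yields $\#\{(C,n)\in\hat{\bm P}: \level_{\bm P}(C,n)=j\} \le 2s|V|/2^j$, which is \eqref{eq:BSmagic}.

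The only step requiring care is the bookkeeping behind the ``each exposure at least doubles the vertex count'' estimate: it is precisely the $s$-scale offset in the definition of $\level_{\bm P}$, together with working inside a fixed residue class (so that successive scales at which $v$ is exposed differ by at least $s$), that yields $C_{i-1}\subseteq D_i$. Once that is in place, an exposure at a level-$j$ cube caps its deficit strictly below $2^{j+1}$, so a vertex whose enclosing count has already doubled twice cannot be exposed again --- which is exactly the claim $N_r(v)\le 2$. No metric geometry of $\cX$ enters this lemma; only the nesting structure of the hierarchical system $\bm P$ is used.
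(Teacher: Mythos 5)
Your proof is correct, and it takes a genuinely different route from the paper's. The paper fixes a residue class $[\sigma]$ and runs a capped-flow argument up the hierarchy: each $C'\in\bm{P}_{n-s}$ sends $\min\{2^{j-1},|C'\cap V|\}$ to its parent in $\bm{P}_n$, the flow never increases going up, and at any pair with $\level_{\bm{P}}(C,n)=j$ the inflow is at least $2^j$ while the outflow is at most $2^{j-1}$; since at most $|V|$ units enter at the bottom, there are at most $2|V|/2^j$ such pairs per class and hence $2s|V|/2^j$ in total. You instead charge each level-$j$ pair to the at least $2^j$ vertices it exposes (those outside the heaviest $\bm{P}_{n-s}$-child) and prove a multiplicity bound: within a fixed residue class a vertex is exposed at most twice, because each exposure at least doubles $|V\cap C|$ along the chain of cubes containing that vertex, while $\level_{\bm{P}}(C,n)=j$ forces the deficit to be strictly below $2^{j+1}$. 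Both arguments use only the nesting structure, the $s$-shift, and the residue-class split, but the key steps differ: the paper's flow-deficit observation uses only the lower bound (deficit $\geq 2^j$), so it in fact bounds $\#\{\level_{\bm{P}}\geq j\}$ directly, whereas your multiplicity-two claim genuinely needs the upper bound encoded in $\level_{\bm{P}}(C,n)=j$. This costs nothing downstream, since the bound on $\#\{\levelt_{\bm{P}}(C,n)=j\}$ in \eqref{eq:levelt} is obtained by summing over $j'\geq j$ in either case. The two points that required care in your version --- the degenerate case of a cube not properly subdivided at scale $n-s$ (deficit $0$, no level $j\geq 0$), and the fact that distinct exposing pairs in a class have scales differing by at least $s$, which yields the containment $C_{i-1}\subseteq D_i$ --- are handled correctly.
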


   \begin{proof}
      Fix $j \geq 0$.
      Denote \[ \equivclass{\sigma} \seteq \left\{ n \in \Z : n \equiv \sigma\ (\bmod\ s) \right\}\,.  \]
      We will prove that for $\sigma \in \{0,1,\ldots,s-1\}$,
      \begin{equation}\label{eq:BSmagic2}
         \# \left\{ (C,n) \in \hat{\bm{P}} : \level_{\bm{P}}(C,n)=j \textrm{ and } n \in \equivclass{\sigma}\right\} \leq \frac{2|V|}{2^j}\,.
      \end{equation}

      Fix $\sigma \in \{0,1,\ldots,s-1\}$.
      For a pair $(C,n) \in \hat{\bm{P}}$, define the set of children
      \[
         \children(C,n) \seteq \left\{ C' \subseteq C : C' \in \bm{P}_{n-s} \right\}\,.
      \]

      Define a ``flow'' $F : (2^X \times \equivclass{\sigma}) \times (2^X \times \equivclass{\sigma}) \to \R$
      ``up'' the hierarchical system $\bm{P}$ as follows: For every $n \in \equivclass{\sigma}$,
      \[
         F\!\left(\vphantom{\bigoplus}(C',n-s), (C,n)\right) = \begin{cases}
            \min \{ 2^{j-1}, |C' \cap V| \}   &         C \in \bm{P}_n, C' \in \children(C,n)  \\
            0 & \textrm{otherwise.}
         \end{cases}
      \]
      Define also:
      \newcommand{\fin}{\mathrm{in}}
      \newcommand{\fout}{\mathrm{out}}
      \begin{align*}
         F_{\fin}(C,n) &\seteq \sum_{(C',n') \in \hat{\bm{P}}} F\left((C',n'),(C,n)\right), \\
         F_{\fout}(C,n) &\seteq \sum_{(C',n') \in \hat{\bm{P}}} F\left((C,n),(C',n')\right), \\
         F_{\fin}^{(n)} &\seteq \sum_{C \in \bm{P}_n} F_{\mathrm{in}}(C,n)\,.
      \end{align*}
      We make three observations:
      \begin{enumerate}
         \item First, notice that flow only goes ``up'' from a child set to a parent set,
            and thus from a lower level to a higher level:
            \begin{equation*}
            F\left((C',n'),(C,n)\right) > 0 \implies n,n' \in [\sigma], n=n'+s, C' \in \children(C,n)\,.
         \end{equation*}
      \item The flow out of $(C,n)$ is always at most the flow into $(C,n)$: $F_{\fout}(C,n) \leq F_{\fin}(C,n)$.
            This is because for $C \in \bm{P}_n$,
            \[
               \sum_{C' \in \children(C,n)} |C' \cap V| = |C \cap V|\,.
            \]
         \item
            When $\level_{\bm{P}}(C,n)=j$, the flow leaving $(C,n)$ is less than the flow
            entering $(C,n)$ by a least $2^{j-1}$
            because by definition of $\level_{\bm{P}}(C,n)$, 
            \[
               \sum_{C' \in \children(C,n)} \min \{ 2^{j-1}, |C' \cap V| \} \geq 2^j\,.
            \]
            In particular, combining this with observation (2) yields, for every $n \in \Z$,
            \begin{equation}\label{eq:only-one}
               F_{\fin}^{(n+1)} \leq F_{\fin}^{(n)} - 2^{j-1} \# \{ C \in \bm{P}_n : \level_{\bm{P}}(C,n)=j \}\,.
            \end{equation}
      \end{enumerate}

      On the other hand, let $n_0 \in \equivclass{\sigma}$ be small enough so that every $C \in \bm{P}_{n_0}$ contains
      at most one point of $V$.  Then $F_{\fin}^{(n)} \leq |V|$ for all $n \leq n_0$.
      Combining this with \eqref{eq:only-one} and the fact that $F \geq 0$ implies \eqref{eq:BSmagic2}.
   \end{proof}

   Let us now assume additionally that $\bm{P}$ is $\Delta$-adic for some $2 \leq \Delta \lesssim 1$ to
   be fixed momentarily.
   Given $S \subseteq X$ and a parameter $n \in \Z$, we define the enlargements
   \[
      N(S,R) \seteq \left\{ x \in X : \dist(x,S) \leq R\right\}\,.
   \]
   Define also the truncated level function:
   \[
      \levelt_{\bm{P}}(C,n) \seteq \min \{ \level_{\bm{P}}(C,n), k \}\,,
   \]
   where we recall that $k$ is the parameter defined at the beginning of \pref{sec:construction}.

   \begin{remark}
   The motivation for this truncation lies in the definition \eqref{eq:theta-def} below,
   and the fact that we are only attempting to establish \eqref{eq:desired25} for a single
   value of $R$ or, equivalently, a single value of $k$.
   Considering ``features'' with level larger than $k$ would incur a quantitative overhead
   that doesn't allow us to obtain a constant $C$ in \eqref{eq:desired25}
   that is independent of $R$.
   \end{remark}

   Note that \pref{lem:BSmagic} gives
   \begin{equation}\label{eq:levelt}
      \# \left\{ (C,n) \in \hat{\bm{P}} : \levelt_{\bm{P}}(C,n) = j \right\} \leq \frac{4s|V|}{2^j}\,,
   \end{equation}
   where the extra factor of $2$ comes from the consequence
   \[
      \# \left\{ (C,n) \in \hat{\bm{P}} : \level_{\bm{P}}(C,n) \geq j \right\} \leq \frac{4s|V|}{2^j}\,.
   \]

   Recall that $d_* = \max(d,2)$.
   For every $(C,n) \in \hat{\bm{P}}$, we define a function $\theta_{\bm{P}}^{(C,n)} : V \to \R$ as follows:
   \begin{equation}\label{eq:theta-def}
      \theta^{(C,n)}_{{\bm{P}}}(v) \seteq \begin{cases} 
         {\displaystyle\frac{2^{\levelt_{\bm{P}}(C,n)/d_*}}{\left(1+k-\levelt_{{\bm{P}}}(C,n)\right)^{2/d_*}}}
         \cdot \min \left\{ \Delta^{-n}, \frac{1}{\omega_0(v)} \right\} & \textrm{if } S_v \cap N(C,2\tau \Delta^n) \neq \emptyset
         \,, \vspace*{0.1in}\\
                              0 & \textrm{otherwise.}
                           \end{cases}
   \end{equation}

   Define a conformal weight $\omega_{{\bm{P}}} : V \to \R_+$ by
   \[
      \omega_{{\bm{P}}}(v) \seteq \omega_0(v) \left(\sum_{(C,n) \in \hat{\bm{P}}} \left(\theta^{(C,n)}_{{\bm{P}}}(v)\right)^{d_*}\right)^{1/d_*}
   \]

   The $1/\omega_0(v)$ factor in \eqref{eq:theta-def} is there to handle the case
   of a set $S_v$ with $\diam(S_v) > \Delta^n$ intersecting the neighborhood of $C$.
   Denote 
   \begin{equation}\label{eq:ENC}
      E_n(C) \seteq \left\{ \vphantom{\bigoplus}
               v \in V : \omega_0(v) > \Delta^n \textrm{ and } S_v \cap N(C,2\tau \Delta^n)\neq \emptyset \right\},
   \end{equation}
   From \eqref{eq:omega-compare}, we have $v \in E_n(C) \implies \diam(S_v) \gtrsim \omega_0(v) \geq \Delta^n$, and therefore \eqref{eq:touching} implies that
   \begin{equation}\label{eq:exceptional}
      |E_n(C)| \lesssim 1\quad \textrm{for all} \quad (C,n) \in \hat{\bm{P}}\,.
   \end{equation}
   Now write:
\begin{equation}\label{eq:weight-bound}
      \sum_{v \in V} \omega_{{\bm{P}}}(v)^{d_*} 
      = \sum_{j=0}^{k}\frac{2^{j}}{(1+k-j)^2} \sum_{n \in \Z} \sum_{\substack{C \in {\bm{P}}_n :\\ \levelt_{\bm{P}}(C,n)=j}}
      \left(|E_n(C)|
                  +\Delta^{-{d_*} n} \sum_{\substack{v \in V :\\ S_v \cap N(C,2\tau\Delta^n) \neq \emptyset \\ \omega_0(v) \leq \Delta^{n}}} \omega_0(v)^{d_*}
      \right)\,.
\end{equation}

   From \eqref{eq:omega-compare}, we have $\diam(S_v) \leq K_0 \omega_0(v)$ for some $1 \leq K_0 \lesssim 1$ and every $v \in V$.
   Thus in the case $d = d_*$, for a fixed $C \in {\bm{P}}_n$, we have
\begin{align*}
      \Delta^{-d_* n}\sum_{\substack{v \in V :\\ S_v \cap N(C,2\tau\Delta^n) \neq \emptyset \\ \omega_0(v) \leq \Delta^{n}}}  \omega_0(v)^{d_*}
      &=
      \Delta^{-d n}\sum_{\substack{v \in V :\\ S_v \cap N(C,2\tau\Delta^n) \neq \emptyset \\ \omega_0(v) \leq \Delta^{n}}}  \mu(S_v) \\
      &\stackrel{\mathclap{\eqref{eq:vmult}}}{\lesssim}
      \Delta^{-d n} \diam(N(C,(2\tau+K_0)\Delta^n))^d  \\
      &\lesssim 1\,.
\end{align*}

When $d < d_*$, use monotonicity of $\ell_p$ norms to write:
\[
      \left(\sum_{\substack{v \in V :\\ S_v \cap N(C,2\tau\Delta^n) \neq \emptyset \\ \omega_0(v) \leq \Delta^{n}}}  \left(\frac{\omega_0(v)}{\Delta^n}\right)^{d_*}\right)^{d/d_*}
      \leq
      \sum_{\substack{v \in V :\\ S_v \cap N(C,2\tau\Delta^n) \neq \emptyset \\ \omega_0(v) \leq \Delta^{n}}}  \left(\frac{\omega_0(v)}{\Delta^n}\right)^{d}
      = \Delta^{-dn} \sum_{\substack{v \in V :\\ S_v \cap N(C,2\tau\Delta^n) \neq \emptyset \\ \omega_0(v) \leq \Delta^{n}}} \mu(S_v)
      \stackrel{\eqref{eq:vmult}}{\lesssim}
      1\,.
\]

   Using this in \eqref{eq:weight-bound} together with \eqref{eq:exceptional},
   we conclude that
   \begin{align}
      \sum_{x \in V} \omega_{{\bm{P}}}(x)^{d_*} &\lesssim \nonumber
      \sum_{j=0}^{k} \frac{2^{j}}{(1+k-j)^2} \# \left\{ (C,n) \in \hat{\bm{P}} : \levelt_{\bm{P}}(C,n) = j\right\}
      \\& 
      \stackrel{\mathclap{\eqref{eq:levelt}}}{\leq}  |V| \sum_{j=0}^{k} \frac{4s}{(1+k-j)^2} \nonumber
      \\
      &\lesssim |V|\,.\label{eq:dvol}
   \end{align}

   Since $(X,\dist)$ is doubling, \pref{thm:ensemble} implies that for some positive integers
   $Q,\ell, \lesssim 1$ and $2 \leq \Delta \lesssim 1$, there
   is a sequence $\{\bm{P}^{(1)}, \ldots \bm{P}^{(Q)}\}$ of $\Delta$-adic hierarchical systems in $X$
   such that:
   \begin{equation}\label{eq:ensemble} 
      S \subseteq X,\  \diam(S) \leq \Delta^m \implies S \subseteq C \textrm{ for some } (C,m+\ell) \in \bigcup_{i=1}^Q \hat{\bm{P}}^{(i)}\,.
   \end{equation}
   Let us now set
   \begin{equation*}\label{eq:set-s}
      s \seteq \ell+4
   \end{equation*}
   in the preceding construction.
   To construct our final weight, we define
   \begin{equation}\label{eq:omega-construct}
      \omega \seteq \omega_{\bm{P}^{(1)}} + \cdots + \omega_{\bm{P}^{(Q)}}\,.
   \end{equation}
   It follows that
   \[
      \left(\frac{1}{|V|} \sum_{x \in V} \omega(x)^{d_*}\right)^{1/d_*} \lesssim \max \left\{
      \left(\frac{1}{|V|} \sum_{x \in V} \omega_{\bm{P}^{(i)}}(x)^{d_*}\right)^{1/d_*} : i =1,\ldots,Q \right\} \stackrel{\eqref{eq:dvol}}{\lesssim} 1\,,
   \]
   where in the first inequality we used the fact that $Q \lesssim 1$.

   \subsubsection{The growth bound}

   The next lemma finishes the proof of \pref{thm:finite-sphere-packing}.

   \begin{lemma}\label{lem:otherside}
   For every subset of vertices $U \subseteq V$ with $|U| = 2^k$,
   there is an index $i \in \{1,\ldots,Q\}$ satisfying
   \begin{equation}\label{eq:otherside}
      \diam_{\omega_{\bm{P}^{(i)}}}(U) \gtrsim 2^{k/d_*}\,.
   \end{equation}
   \end{lemma}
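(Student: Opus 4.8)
The plan is to prove \pref{lem:otherside}; together with the volume estimate \eqref{eq:dvol} and a normalization this gives \pref{thm:finite-sphere-packing}, since $\omega=\sum_i\omega_{\bm P^{(i)}}\ge\omega_{\bm P^{(i)}}$ pointwise forces $\dist_\omega\ge\dist_{\omega_{\bm P^{(i)}}}$ and hence $\diam_\omega(U)\ge\diam_{\omega_{\bm P^{(i)}}}(U)$. Fix $U$ with $|U|=2^k$, identify each $v\in U$ with the point $x_v\in S_v$, and write $\widehat U=\{x_v:v\in U\}$. Two elementary facts will be used repeatedly. \emph{(Chain inequality.)} For any path $\gamma=(v_0,\dots,v_L)$ in $G$, iterating \eqref{eq:set-dist} and using $\diam(S_v)\asymp\omega_0(v)$ from \eqref{eq:omega-compare} gives $\len_{\omega_0}(\gamma)\asymp\sum_t\diam(S_{v_t})\gtrsim\dist(x_{v_0},x_{v_L})$, and likewise for any contiguous subpath; more precisely, whenever $\gamma$ passes from $\dist$-distance $\le A$ to $\dist$-distance $\ge B$ from a fixed point, the $\omega_0$-mass it spends at distances in $[A,B]$ is $\gtrsim B-A$. \emph{(Boost.)} For every pair $(C,n)$ and every $v$ with $S_v\cap N(C,2\tau\Delta^n)\ne\emptyset$, the definition \eqref{eq:theta-def} gives $\omega_{\bm P}(v)\ge\omega_0(v)\,\theta^{(C,n)}_{\bm P}(v)$, which is $\gtrsim\frac{2^{\levelt_{\bm P}(C,n)/d_*}}{(1+k-\levelt_{\bm P}(C,n))^{2/d_*}}\,\Delta^{-n}\omega_0(v)$ if $\diam(S_v)\lesssim\Delta^n$, and $\gtrsim\frac{2^{\levelt_{\bm P}(C,n)/d_*}}{(1+k-\levelt_{\bm P}(C,n))^{2/d_*}}$ otherwise.

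Next I run an isolation recursion in the spirit of \cite{BS01}. Set $U_0=U$. Given $U_j$, apply \pref{thm:ensemble} to $\widehat U_j$ at the minimal scale $m_j$ with $\diam(\widehat U_j)\le\Delta^{m_j}$, obtaining a system $\bm P^{(i_j)}$ and a cube $C_j$ at level $n_j=m_j+\ell$ with $\widehat U_j\subseteq C_j$ and $\diam(C_j)\asymp\Delta^{n_j}\asymp\diam(\widehat U_j)$. Because $s=\ell+4$ and $\Delta\ge2$, every child of $C_j$ at scale $n_j-s$ has diameter $\le\Delta^{n_j-s}<\diam(\widehat U_j)$, so no child contains all of $\widehat U_j$; thus $a_j\seteq\level_{\bm P^{(i_j)}}(C_j,n_j)$ is well defined and every child omits at least one point of $\widehat U_j$. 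If $|U_j|>2^{k-1}$ and $a_j<k-c_0$ for a suitable constant $c_0=c_0(d)$, I take a child $C_j'$ at scale $n_j-s$ omitting fewer than $2^{a_j+1}$ points of $\widehat U_j$ and let $U_{j+1}$ be the vertices of $U_j$ lying in $C_j'$, so $|U_j|-|U_{j+1}|<2^{a_j+1}$; otherwise I stop. Since $|U_j|$ strictly decreases, the recursion halts at some step $R$, either because $a_R\ge k-c_0$ (Case~A) or because $|U_R|\le2^{k-1}$ with $a_R<k-c_0$ (Case~B). In Case~B, telescoping $\sum_{j<R}(|U_j|-|U_{j+1}|)=2^k-|U_R|\ge2^{k-1}$ gives $\sum_{j<R}2^{a_j}>2^{k-2}$; in Case~A trivially $\sum_j2^{a_j}\ge2^{a_R}\ge2^{k-c_0}$. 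Either way $\sum_j2^{a_j}\gtrsim2^k$.

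Now I convert the cubes into a length lower bound. Pick $u\in U_R$ and let $w\in U$ be such that $x_w$ is farthest from $x_u$ in $\widehat U$, so $\dist(x_u,x_w)\gtrsim\diam(\widehat U)\asymp\Delta^{n_0}$. Since $x_u\in C_j$ and $\diam(C_j)\le\Delta^{n_j}$, the neighborhoods $N(C_j,2\tau\Delta^{n_j})$ contain $B(x_u,2\tau\Delta^{n_j})$ and lie inside $B(x_u,(1+2\tau)\Delta^{n_j})$, and because $n_j-n_{j+1}\ge4$ they are nested. Let $\gamma$ be any $u$--$w$ path. If $\gamma$ stays inside $N(C_0,2\tau\Delta^{n_0})$, the chain inequality and the boost already give $\len_{\omega_{\bm P^{(i_0)}}}(\gamma)\gtrsim2^{a_0/d_*}(1+k-a_0)^{-2/d_*}$. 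Otherwise $\gamma$ exits every $N(C_j,2\tau\Delta^{n_j})$, and for each $j$ the portion of $\gamma$ whose $\dist$-distance from $x_u$ lies in the window $[(1+2\tau)\Delta^{n_{j+1}},\,2\tau\Delta^{n_j}]$ — nonempty and of width $\gtrsim\Delta^{n_j}$ since $n_j-n_{j+1}\ge4$ — lies in $N(C_j,2\tau\Delta^{n_j})\setminus N(C_{j+1},2\tau\Delta^{n_{j+1}})$; these portions are disjoint across $j$, each carries $\omega_0$-mass $\gtrsim\Delta^{n_j}$ by the chain inequality, and every vertex on portion $j$ has $S_v$ meeting $N(C_j,2\tau\Delta^{n_j})$ and so receives the $\theta^{(C_j,n_j)}_{\bm P^{(i_j)}}$-boost (a body large relative to $\Delta^{n_j}$ delivers the boost $\gtrsim2^{a_j/d_*}(1+k-a_j)^{-2/d_*}$ by itself, via the $1/\omega_0$ branch). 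Summing the disjoint contributions, $\len_{\omega_{\bm P^{(i)}}}(\gamma)\gtrsim\sum_{j:\,i_j=i}\frac{2^{\min(a_j,k)/d_*}}{(1+k-\min(a_j,k))^{2/d_*}}$ for each $i$; since there are only $Q\lesssim1$ systems, pigeonholing produces one $i$ with $\dist_{\omega_{\bm P^{(i)}}}(u,w)\ge\len_{\omega_{\bm P^{(i)}}}(\gamma)\gtrsim\sum_j\frac{2^{\min(a_j,k)/d_*}}{(1+k-\min(a_j,k))^{2/d_*}}$, uniformly in $\gamma$.

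It remains to check the arithmetic: given $\sum_j2^{a_j}\gtrsim2^k$ with $a_j\ge0$ one must show $\sum_j\frac{2^{\min(a_j,k)/d_*}}{(1+k-\min(a_j,k))^{2/d_*}}\gtrsim2^{k/d_*}$. Writing each summand as $\bigl(2^{a_j}(1+k-a_j)^{-2}\bigr)^{1/d_*}$ and grouping indices by the value of $a_j$, one estimates $\sum_{\ell\le k}2^{\ell(1-1/d_*)}(1+k-\ell)^{2/d_*}\asymp2^{k(1-1/d_*)}$ — this is exactly where $d_*=\max(d,2)\ge2$ is essential, as it makes the exponent $1-1/d_*\ge\tfrac12$ positive so the geometric series is dominated by $\ell$ near $k$ — and concludes that the minimum of $\sum_j\frac{2^{a_j/d_*}}{(1+k-a_j)^{2/d_*}}$ under $\sum_j2^{a_j}\gtrsim2^k$ is, up to constants, attained at all $a_j=0$, giving $\gtrsim2^{k-2}(1+k)^{-2/d_*}\gtrsim2^{k/d_*}$ once $k$ exceeds an absolute constant (bounded $k$ is absorbed into the implied constant). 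Hence $\diam_{\omega_{\bm P^{(i)}}}(U)\ge\dist_{\omega_{\bm P^{(i)}}}(u,w)\gtrsim2^{k/d_*}$. The hard part is the bookkeeping of the third step: verifying that a near-optimal path genuinely deposits $\Omega(\Delta^{n_j})$ of $\omega_0$-mass in the $j$-th annulus without double counting (a wiggly path deposits \emph{more} $\omega_0$-mass there, but all inside an already-boosted region), handling bodies large relative to the local scale uniformly with the small ones, and checking that the constants built into $s=\ell+4$, the $2\tau$-enlargements, and the windows are mutually consistent so that the annuli are nonempty, pairwise disjoint, and each seen by the intended cube.
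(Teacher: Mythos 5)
Your overall plan is the same as the paper's (a Benjamini--Schramm-style descending chain of dyadic cubes, level-driven reweighting via \eqref{eq:theta-def}, forcing any path to pay in disjoint annuli around the cubes, then an exchange/concavity estimate), and your organizational twist --- re-selecting the hierarchical system at every scale of the recursion and pigeonholing over the $Q\lesssim 1$ systems at the end, instead of fixing one system at the top as the paper does --- is legitimate. But there is a genuine gap in the path-length step. Your dichotomy conditions on whether $\gamma$ leaves $N(C_0,2\tau\Delta^{n_0})$. Both endpoints $x_u$ and $x_w$ lie in $C_0$ (indeed $\widehat U\subseteq C_0$), so a path never has any reason to leave that neighborhood, and the ``otherwise'' branch --- the only place where you harvest $\sum_j w_j$ --- is never forced. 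In the branch that every competitive path can choose, you harvest only $w_0=2^{\min(a_0,k)/d_*}(1+k-\min(a_0,k))^{-2/d_*}$, which can be as small as $(1+k)^{-2/d_*}$ when the top-level $a_0$ is small and the mass $\sum_j 2^{a_j}\gtrsim 2^k$ sits at deeper scales; so as written the argument only proves $\dist_{\omega}(u,w)\gtrsim w_0$. The crossing of annulus $j$ must instead be forced by the \emph{endpoint}: $x_w\notin N(C_j,2\tau\Delta^{n_j})$, which your choice of $w$ guarantees only for all but the $O(1)$ topmost indices (those with $(1+2\tau)\Delta^{n_j}\gtrsim\dist(x_u,x_w)$), and those exceptional indices must then be collected by the ``everything in sight is boosted'' route, at the cost of a max instead of a sum. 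This is exactly why the paper does \emph{not} take a farthest point: it chooses $u_0$ with $\dist(u_0,C_1)>D/4$ (possible because $\diam(C_1)\le D/8$, see \eqref{eq:large-dist}), runs the annulus argument for the cubes $C_1,\dots,C_m$, and treats the scale-$n_0$ contribution separately. Your framework is repairable along these lines, but the repair is a missing step, not a detail.

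Two further points. First, your closing arithmetic contains a false intermediate claim: the minimum of $\sum_j 2^{a_j/d_*}(1+k-a_j)^{-2/d_*}$ subject to $\sum_j 2^{a_j}\gtrsim 2^k$ is \emph{not} attained (even up to constants) at all $a_j=0$; it is attained with the mass at levels within $O(1)$ of $k$, where the value is $\asymp 2^{k/d_*}$, which is far smaller than your claimed lower bound $2^{k-2}(1+k)^{-2/d_*}$ (a single index with $a_j=k$ already achieves $\asymp 2^{k/d_*}$). The inequality you actually need, $\sum_j w(a_j)\gtrsim 2^{k/d_*}$, is true, and follows from the per-level efficiency bound $2^{(k-a)(d_*-1)}\gtrsim (1+k-a)^{2}$ (this is where $d_*\ge 2$ enters), i.e.\ from the same mechanism as \pref{lem:concave} and \pref{cor:helper5}; but the justification as written is wrong and should be replaced. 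Second, minor slips: $\dist_{\omega}(u,w)\ge\len_{\omega}(\gamma)$ is backwards (you want a uniform lower bound on $\len$ over all $\gamma$, which your system-independent quantities $A_i=\sum_{j:i_j=i}w_j$ do provide), and your ``chain inequality'' as stated can have zero mass strictly inside a window when an edge jumps across it, so the boost must be transferred to the jump's endpoint vertices --- the constant-chasing that the paper carries out in \pref{lem:help1} and that you defer to ``bookkeeping''; that deferral is acceptable, but the two items above are not.
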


\begin{proof}
   Let us fix a subset $U \subseteq V$,
   and denote $D = \diam(U) > 0$.
   Let $n' \seteq \lceil \log_{\Delta} D\rceil + \ell$.
   Then by \eqref{eq:ensemble}, there is an index $i \in \{1,\ldots,Q\}$ such that
   $U \subseteq C$ for some $(C,n') \in \hat{\bm{P}}^{(i)}$.
   Let $\bm{P}=\bm{P}^{(i)}$.

   We now define inductively a sequence of 
   pairs $(C'_0,n'), (C'_1,n'-s), \ldots, (C'_{m'},n'-m's) \in \hat{\bm{P}}$
   as follows.
   \begin{itemize}
      \item Let $C'_0\seteq C$\,.
      \item If $|U \cap C'_i| \leq 1$, we set $m' \seteq i$ and stop.

         Otherwise, we choose $C'_{i+1} \in \bm{P}_{n-s(i+1)}$ to be an element of the set
         $\{C' \in \bm{P}_{n-s(i+1)}: C' \subseteq C'_i \}$ that maximizes $|U\cap C'|$.
   \end{itemize}

   Let us then pass to the maximal subsequence $\{(C_0,n_0), (C_1,n_1),\ldots,(C_m,n_m)\}$
   of the sequence
   $\{(C'_0, n), (C'_1,n-s), \ldots, (C'_{m'}, n-m's)\}$
   with $n_0 > n_1 > \cdots > n_m$
   and the property that
   \[
      n_i = \min \left\{ n  : \exists (C'_j,n'-js) \textrm{ with } n = n'-js \textrm{ and } C'_j \cap U = C_i \cap U \right\}\,.
   \]
   In other words, we enforce the property that 
   \begin{equation}\label{eq:N0} 
      C_{i+1} \cap U \neq C_i \cap U \quad\textrm{for each } i=0,1,\ldots,m-1\,.
   \end{equation}
   Define $C_{m+1}=\emptyset$.

   We have chosen the sequence $\{n_i\}$ in this way so that
   for every $i \in \{0,1,\ldots,m\}$,
   \begin{equation}\label{eq:thelevels}
   \levelt_{{\bm{P}}}(C_i,n_i) \geq \lfloor \log_2 |(U \cap C_i) \setminus C_{i+1}|\rfloor\,.
   \end{equation}
   From our choice of $s=\ell+4$
   and the fact that $\bm{P}$ is $\Delta$-adic with $\Delta \geq 2$,
   it holds that 
   \[
      \diam(C_{1}) \leq \Delta^{n'-s} \leq \Delta^{-3} D \leq \frac{D}{8}\,.
   \]
   Since $\diam(U)=D$,
   there must exist some $u_0 \in U$ such that
   \begin{equation}\label{eq:large-dist}
      \dist(u_0, C_{1}) > \frac{D}{4} > \Delta^{n_1}\,.
   \end{equation}
   Fix also some $u_m \in C_m \cap U$.
   We will establish that $\dist_{\omega_{\bm{P}}}(u_1, u_m)$ is large, certifying
   that $\diam_{\omega_{\bm{P}}}(U)$ is large as well.

   Let $N_{i} \seteq |(U \cap C_{i}) \setminus C_{i+1}|$ for $i=0,1,\ldots,m.$
   Note that $N_i \geq 1$ from \eqref{eq:N0}.
   Define
   \[
         \ell_i \seteq \levelt_{\bm{P}}(C_i,n_i) \quad \textrm{ for } i \in \{0,1,\ldots,m\}\,,
   \]
   and observe from \eqref{eq:thelevels} that
   \begin{equation}\label{eq:N1}
      2^{\ell_i} \geq N_{i}/2\,.
   \end{equation}
   And by construction,
   \begin{equation}\label{eq:N2}
      \sum_{i=0}^{m} N_{i} = |U| = 2^k\,.
   \end{equation}

   \begin{figure}
      \begin{center}
            \includegraphics[width=9cm]{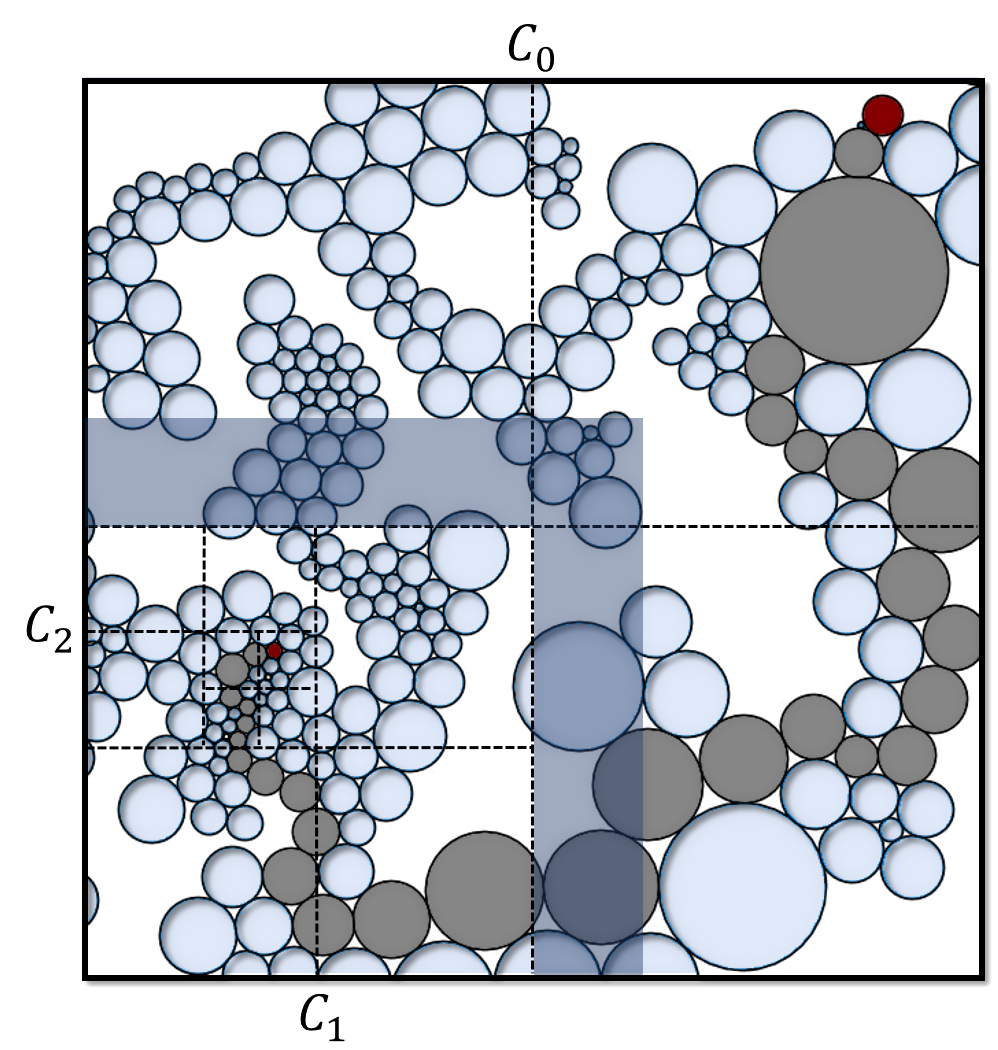}
            \caption{The path $\gamma$ from $u_0 \in C_0$ to $u_m \in C_m$ passing through $N(C_1,\Delta^{n_1})\setminus C_1$.
            \label{fig:buffer}}
      \end{center}
      \end{figure}

   \medskip
   \noindent
   {\bf The length of a $u_0$-$u_m$ path.}
   Let $\gamma = \langle v_0, v_1, v_2, \ldots, v_t\rangle$ be an arbitrary
   simple path in $G$ with $v_0=u_0$ and $v_t=u_m$.  Our goal is to prove that
   \begin{equation}\label{eq:goal}
      \len_{\omega_{\bm{P}}}(\gamma) \gtrsim 2^{k/d_*}\,,
   \end{equation}
   since if this holds for all such paths $\gamma$, it verifies \eqref{eq:otherside}.

   The basic outline is as as follows.
   Informally, imagine that $\gamma$ is parameterized by arclength in the metric $\dist$.
   While $\gamma$ need not spend much time in a cube $C_i$,
   it must cross from outside $C_{i-1}$ to inside $C_{i}$, 
   and therefore it must spend time $\asymp \Delta^{n_{i}}$
   in the neighborhood $N(C_i,\Delta^{n_i})$, where its $\dist_{\omega_{\bm{P}}}$-length
   experiences a reweighting by $\theta_{\bm{P}}^{(C_i,n_i)}$.
   See \pref{fig:buffer}.
   We will now split $\gamma$ into subpaths $\gamma_0,\gamma_1,\ldots,\gamma_m$ accordingly
   and show that the reweighting is sufficient to yield \eqref{eq:goal}.

   \medskip

   For $i \in \{1,\ldots,m\}$, 
   let $s_i$ denote the largest index for which $v_{s_i} \in \gamma$
   satisfies $v_{s_i} \notin N(C_i, \Delta^{n_i})$, and let $t_i$ denote
   the smallest index for which $t_i > s_i$ and $v_{t_i} \in N(C_i, \Delta^{n_i}/2)$.
   Such indices must exist because $\gamma$ begins at $u_0 \notin N(C_1,\Delta^{n_1})$ (recall \eqref{eq:large-dist})
   and $\gamma$ ends at $u_m \in C_{m}$.
   Let $\gamma_i$ denote the subpath $\langle v_{s_i}, \ldots, v_{t_i}\rangle$.
   Define $\gamma_0$ similarly 
   unless $\gamma \subseteq N(C_0, \Delta^{n_0})$.
   In that case, we define $\gamma_0 \seteq \gamma$.
   Observe that, by construction,
   \begin{equation}\label{eq:len-dist}
      \len_{\dist}(\gamma_i) \gtrsim \Delta^{n_i}\,.
   \end{equation}
   For $i \geq 1$, this follows from $v_{s_i} \notin N(C_i, \Delta^{n_i})$ but $v_{t_i} \in N(C_i,\Delta^{n_i}/2)$.
   If $i=0$ and $\gamma_0 = \gamma$, it follows from
   \[
      \len_{\dist}(\gamma) \geq \dist(u_0,u_m) \stackrel{\eqref{eq:large-dist}}{\geq} D/4 \gtrsim \Delta^{n_0}\,.
   \]

   This yields a lower bound on the $\omega_0$-length of each $\gamma_i$.

   \begin{lemma}\label{lem:help3}
      For each $i \in \{0,1,\ldots,m\}$,
      \[
         \len_{\omega_0}(\gamma_i) \gtrsim \Delta^{n_i}\,.
      \]
   \end{lemma}

   \begin{proof}
      Parameterize $\gamma_i = \langle x_1, x_2, \ldots, x_h\rangle$.
      From \eqref{eq:set-dist}, we have
      \begin{equation}\label{eq:distvs}
         \dist(x_j, x_{j+1})\lesssim \diam(S_{x_j}) + \diam(S_{x_{j+1}})\lesssim \omega_0(x_j) + \omega_0(x_{j+1})\,,
      \end{equation}
      where the last inequality is \eqref{eq:omega-compare}.

   We conclude that
   \[
      \len_{\omega_0}(\gamma_i) \geq \frac12 \sum_{j=1}^h \omega_0(x_j) \stackrel{\eqref{eq:distvs}}{\gtrsim} 
      \sum_{j=1}^{h-1} \dist(x_j,x_{j+1}) =
      \len_{\dist}(\gamma_i) \gtrsim \Delta^{n_i}\,.\qedhere
   \]
\end{proof}

   Toward proving \eqref{eq:goal}, observe that
   \begin{equation}\label{eq:bnd1}
      \len_{\omega_{{\bm{P}}}}(\gamma) \geq \frac12 \sum_{j=0}^{t} \omega_{{\bm{P}}}(v_j) \geq \frac12 \sum_{j=0}^{t} \omega_0(v_j)
      \left(\sum_{i=0}^{m} \left(\theta^{(C_i,n_i)}_{{\bm{P}}}(v_j)\right)^{d_*} \right)^{1/d_*}
   \end{equation}

   Recall that $1\leq K_0 \lesssim 1$ was chosen so that $\diam(S_v) \leq K_0 \omega_0(v)$ for all $v \in V$.
	Recalling \eqref{eq:set-dist}, let $1 \leq K_1 \lesssim 1$ be such that
\[
	\max \left\{ \dist(x,y) : x\in S_u, y \in S_v\right\} \leq K_1 \left(\diam(S_u)+\diam(S_v)\right)\qquad \forall \{u,v\} \in E\,.
\]
   For each $v \in V$, denote
   \[
      L(v) \seteq \left\{ i \in \{0,1,\ldots,m\} : \omega_0(v) > \frac{\Delta^{n_i}}{8 K_0 K_1} \textrm{ and } S_v \cap N(C_i, 2\tau \Delta^{n_i}) \neq \emptyset\right\}\,.
   \]
   This is the set of indices $i$ such that $S_v$ intersects the neighborhood of $C_i$ but $\diam(S_v)$ is
   ``large'' with respect to $\diam(C_i)$.

   Define the subset
   \[
      \Lambda \seteq \left\{ i \in \{0,1,\ldots,m\} : i \notin \bigcup_{v \in \gamma} L(v) \right\}\,,
   \]
   and the quantities
   \begin{align*}
      N_{\Lambda} &\seteq \sum_{i \in \Lambda} N_i  \\
      N_{\bar{\Lambda}} &\seteq 2^k - N_{\Lambda}\,.
   \end{align*}
   Clearly the following two claims suffice to establish \eqref{eq:goal}.

   \begin{lemma}[Large bodies]\label{lem:elf1}
      If $N_{\bar{\Lambda}} \geq 2^{k-1}$, then
      \[
         \len_{\omega_{\bm{P}}}(\gamma) \gtrsim 2^{k/d_*}\,.
      \]
   \end{lemma}

   \begin{lemma}[Small bodies]\label{lem:elf2}
      If $N_{\Lambda} \geq 2^{k-1}$, then
      \[
         \len_{\omega_{\bm{P}}}(\gamma) \gtrsim 2^{k/d_*}\,.
      \]
   \end{lemma}

   In proving these two lemmas, we will need the following elementary estimate.
   It is a discretized version of the fact that the $x \mapsto (\log x)^{-2/d_*} x^{1/d_*}$ is concave
   on the interval $[c,\infty)$ for some $c > 1$.

   \begin{lemma}\label{lem:concave}
      For some integer $A \geq 2$,
      consider $S_A = \{ (a_0, a_1, \ldots, a_k) \in \Z_+^{k+1} : A = a_0 2^{k} + a_1 2^{k-1} + \cdots + a_k\}$
      Then the quantity
      \begin{equation}\label{eq:obj}
         \sum_{i=0}^k a_i \frac{2^{(k-i)/d_*}}{(7+i)^{2/d_*}}
      \end{equation}
      is minimized over $S_A$ when $a_1, \ldots, a_k \in \{0,1\}$.
   \end{lemma}

   \begin{proof}
      Consider any $(a_0, a_1, \ldots, a_k) \in S_A$ such that $a_i \geq 2$ for some $i > 0$.
      Then $(a'_0, a'_1, \ldots, a'_k) \in S_A$ where $a_j' = a_j$ if $j \notin \{i,i-1\}$,
      and $a_i' = a_i-2, a_{i-1}' = a_{i-1}+1$.
      We can calculate the change in the value of \eqref{eq:obj}:
      \begin{align*}
         \frac{2^{(k-i)/d_*}}{(6+i)^{2/d_*}} - 2 \frac{2^{(k-(i+1))/d_*}}{(7+i)^{2/d_*}}
         &= 2^{(k-i)/d_*} \left(\frac{1}{(6+i)^{2/d_*}} - \frac{2^{1-1/d_*}}{(7+i)^{2/d_*}}\right) \\
         &= \frac{2^{(k-i)/d_*}}{(7+i)^{2/d_*}} \left(\left(1+\frac{1}{6+i}\right)^{2/d_*}-2^{1-1/d_*}\right) < 0\,,
      \end{align*}
      where we have used $d_* \geq 2$.
   \end{proof}

   \begin{corollary}\label{cor:helper5}
      Suppose that for some $a_0, a_1, a_2, \ldots, a_k \in \Z_+$, it holds that
      $a_0 2^k + a_1 2^{k-1} + \cdots + a_k \geq 2^{k-2}$.
      Then,
      \[
         \sum_{i=0}^k a_i \frac{2^{(k-i)/d_*}}{(1+i)^{2/d_*}} \geq \frac{2^{(k-2)/d_*}}{9}\,.
      \]
   \end{corollary}

   \begin{proof}
      Applying \pref{lem:concave} gives
      \[
         \sum_{i=0}^k a_i \frac{2^{(k-i)/d_*}}{(1+i)^{2/d_*}} \geq 
         \sum_{i=0}^k a_i \frac{2^{(k-i)/d_*}}{(7+i)^{2/d_*}} \geq \frac{2^{(k-2)/d_*}}{9^{2/d_*}} \geq \frac{2^{(k-2)/d_*}}{9}\,.\qedhere
      \]
   \end{proof}

\medskip
\noindent
{\bf Contribution from large bodies.}
   Now we can prove \pref{lem:elf1}.

   \begin{proof}[Proof of \pref{lem:elf1}]
   From the definition \eqref{eq:theta-def}, we have
   \begin{equation}\label{eq:eqbnd1}
      i \in L(v) \implies \left(\omega_0(v) \theta^{(C_i,n_i)}_{\bm{P}}(v)\right)^{d_*} \gtrsim \frac{2^{\ell_i}}{(1+k-\ell_i)^2}
   \end{equation}
      Using \eqref{eq:bnd1} in conjunction with \eqref{eq:eqbnd1} yields
      \begin{equation}\label{eq:f11}
         \len_{\omega_{\bm{P}}}(\gamma)
            \gtrsim \sum_{v \in \gamma} \left(\sum_{i \in L(v)} \frac{2^{\ell_i}}{(1+k-\ell_i)^2}\right)^{1/d_*}
         \geq \sum_{v \in \gamma} \sum_{i \in L(v)} \frac{2^{\ell_i/d_*}}{(1+k-\ell_i)^{2/d_*}}\,.
      \end{equation}
      Now from \eqref{eq:N1}, we have
      \[
         \sum_{v \in \gamma} \sum_{i \in L(v)} 2^{\ell_i} \geq N_{\bar{\Lambda}}/2 \geq 2^{k-2}\,.
      \]
      Thus \pref{cor:helper5} in conjunction with \eqref{eq:f11} yields the desired bound.
   \end{proof}

   \medskip
   \noindent
   {\bf Contribution from small bodies.}
   Once we restrict ourselves to subpaths $\gamma_i$ composed of bodies that are ``small''
   with respect to the scale of the cube $C_i$, we can argue that the corresponding
   subpaths are well-behaved.

   \begin{lemma}\label{lem:help1}
      For every $i \in \Lambda$, if $\gamma_i = \langle x_1, \ldots, x_h\rangle$, then 
      \[
         \dist(x_j, x_{j+1}) \leq \frac{\Delta^{n_i}}{4} \quad \textrm{for}\quad j=1,2,\ldots,h-1\,.
      \]
      In particular, it holds that
      $\gamma_i \subseteq N(C_i, 2 \Delta^{n_i}) \setminus N(C_i, \Delta^{n_i}/4)$.
   \end{lemma}
   
   \begin{proof}
      By construction, we have
      $x_2, \ldots, x_h \in N(C_i, \Delta^{n_i})$ and $x_1, \ldots, x_{h-1} \notin N(C_i, \Delta^{n_i}/2)$.
      Thus the second assertion of the lemma follows from the first.

      To verify the former, note that
      since $x_2, \ldots, x_h \in N(C_i, \Delta^{n_i})$, we have $S_{x_j} \cap N(C_i, \Delta^{n_i}) \neq \emptyset$ for $j=2,\ldots,h$.
      Therefore since $i \in \Lambda$, it holds that
      $\omega_0(x_j) \leq \frac{\Delta^{n_i}}{8 K_0 K_1}$ for $j=2,\ldots,h$.
      In particular, $\diam(S_{x_2}) \leq K_0 \omega_0(x_2) \leq \frac{\Delta^{n_i}}{8}$.
      Since $\{x_1,x_2\} \in E$, the quasi-tangency condition \eqref{eq:coarse-tan} gives
      \[
         \dist(S_{x_1},S_{x_2}) \leq \tau\cdot \diam(S_{x_2}) \leq \tau \frac{\Delta^{n_i}}{8}\,,
      \]
      and therefore
      \[
         S_{x_2} \cap N(C_i, \Delta^{n_i}) \neq \emptyset \implies S_{x_1} \cap N(C_i, 2\tau\Delta^{n_i}) \neq \emptyset\,.
      \]
      Since $i \in \Lambda$, we have $\omega_0(x_1) \leq \frac{\Delta^{n_i}}{8 K_0 K_1}$ as well.

      Using this in conjuction with \eqref{eq:set-dist}, it holds that
      for $j=1,2,\ldots,h-1$, since $\{x_j,x_{j+1}\} \in E(G)$,
      \[
         \dist(x_j, x_{j+1}) \leq K_1\left(\diam(S_{x_j}) + \diam(S_{x_{j+1}})\right) \leq K_0 K_1 (\omega_0(x_j)+\omega_0(x_{j+1})) \leq 2 K_0 K_1 \frac{\Delta^{n_i}}{8 K_0 K_1} \leq \frac{\Delta^{n_i}}{4}\,.\qedhere
      \]
   \end{proof}

   Recall that $\gamma = \langle v_0, v_1, \ldots, v_t\rangle$.

   \begin{lemma}\label{lem:help2}
      For each $j \in \{0,1,\ldots,t\}$, $v_j$ occurs in at most one subpath $\{\gamma_i : i\in \Lambda\}$.
   \end{lemma}

   \begin{proof}
      Note that since $n_{i+1} \leq n_i - s$ for all $i=0,1,\ldots,m-1$, and $\Delta \geq 2, s\geq 4$,
      the sets $N(C_i, 2 \Delta^{n_i}) \setminus N(C_i, \Delta^{n_i}/4)$ are pairwise disjoint
      for all $i=0,1,\ldots,m$.  Hence the result follows from \pref{lem:help1}.
   \end{proof}

   We can now finish the proof.

   \begin{proof}[Proof of \pref{lem:elf2}]
      First, note that \pref{lem:help2} implies that for every $j\in\{0,1,\ldots,t\}$,
      \[
         \left(\sum_{\substack{i \in \Lambda :\\ v_j \in \gamma_i}} 
   \left(\theta^{(C_i,n_i)}_{{\bm{P}}}(v_j)\right)^{d_*} \right)^{1/d_*}
   =
      \sum_{\substack{i \in \Lambda :\\ v_j \in \gamma_i}}\theta^{(C_i,n_i)}_{{\bm{P}}}(v_j)\,.
      \]
   Using this in \eqref{eq:bnd1} yields
   \begin{equation}\label{eq:elfa}
      \len_{\omega_{\bm{P}}}(\gamma)
      \geq \frac12 \sum_{j=0}^t \omega_0(v_j) \sum_{\substack{i \in \Lambda :\\ v_j \in \gamma_i}}\theta^{(C_i,n_i)}_{{\bm{P}}}(v_j) 
      =\frac12
      \sum_{i \in \Lambda} \left(\sum_{v \in \gamma_i} \theta_{\bm{P}}^{(C_i,n_i)}(v) \omega_0(v)\right)\,.
   \end{equation}

      From \pref{lem:help3}, we know that 
      \begin{equation}\label{eq:elf21}
         \sum_{v \in \gamma_i} \omega_0(v) \gtrsim \Delta^{n_i}\,.
      \end{equation}

      For $i \in \Lambda$, \pref{lem:help1} yields $\gamma_i \subseteq N(C_i, 2 \Delta^{n_i})$, hence
      $S_{v} \cap N(C_i, 2 \Delta^{n_i}) \neq \emptyset$ for each $v \in \gamma_i$.
      From the definition of $\Lambda$, this yields $\omega_0(v) \leq \frac{\Delta^{n_i}}{8 K_0}$, thus
      from the definition \eqref{eq:theta-def},
      \[
         v \in \gamma_i \implies \theta_{\bm{P}}^{(C_i,n_i)}(v) \gtrsim \Delta^{-n_i} \frac{2^{\ell_i/d_*}}{(1+k-\ell_i)^{2/d_*}}\,.
      \]

      Combining this with \eqref{eq:elfa} and \eqref{eq:elf21} gives
      \begin{equation}\label{eq:soclose}
         \len_{\omega_{\bm{P}}}(\gamma) \gtrsim \sum_{i \in \Lambda} \frac{2^{\ell_i/d_*}}{(1+k-\ell_i)^{2/d}}\,.
      \end{equation}
      By \eqref{eq:N1} and our assumption that $N_{\Lambda} = \sum_{i \in \Lambda} N_i \geq 2^{k-1}$,
      we have $\sum_{i \in \Lambda} 2^{\ell_i} \geq 2^{k-2}$. 
      Thus \pref{cor:helper5} in conjunction with \eqref{eq:soclose} yields
      \[
         \len_{\omega_{\bm{P}}}(\gamma) \gtrsim 2^{k/d_*}\,,
      \]
   completing the proof.\qedhere
\end{proof}
\end{proof}

\subsection{$d$-parabolicity}
\label{sec:parabolic}

We first discuss two examples showing that for distributional limits of finite graphs
with uniformly bounded degrees, $d$-parabolicity and the property that $\dimconfover^d(G,\rho) \leq d$
are incomparable.

First, we remark on the following general construction.  Let $\{(H_n,\rho_n) : n \geq 1\}$ be 
a sequence of non-isomorphic, finite rooted graphs, and let $p$ be a probability on $\N$.
Let $(\bm{H},\bm{h})$ be the random rooted graph that arises by choosing $(H_n,\rho_n)$ with probability $p(n)$.
Suppose furthermore that
\begin{equation}\label{eq:isuni}
   \E \left[|V(\bm{H})|\right] = \sum_{n \geq 1} p(n) |V(H_n)| < \infty\,.
\end{equation}

Consider a path $P_N$ of length $N \geq 1$, and attach to each vertex of $P_N$ an independent copy
of $(\bm{H},\bm{h})$ (we identify $\bm{h}$ with the corresponding vertex in $P_N$).
This yields a random graph $G_N$, and we choose a root $r_N \in V(G_N)$ uniformly at random.
We claim that $\{(G_N,r_N)\}$ has a distributional limit $(G,\rho)$.
To see this, note that
\[
   q(n) \seteq \lim_{N \to \infty} \Pr[r_N \textrm{ is in a copy of $H_n$}] = \frac{p(n) |V(H_n)|}{\E [V(\bm{H})]}\,.
\]
Now \eqref{eq:isuni} implies that $q$ is a probability on $\N$.

It is then straightforward to describe the limit:  $(G,\rho)$ is a bi-infinite path $P$ with
some fixed vertex $v_0 \in V(P)$.  At $v_0$, we attach a copy $H$ of $(H_n,\rho_n)$ with
probability $q(n)$, and choose $\rho \in V(H)$ uniformly at random.  At every vertex in $V(P) \setminus \{v_0\}$,
we attach an independent copy of $(\bm{H},\bm{h})$.

Using the weight $W(v) \seteq \frac{\1_{V(P)}(v)}{1+\dist_G(v_0,v)}$ verifies
the following claim.

\begin{claim}\label{claim:Gpara}
   $G$ is almost surely $2$-parabolic.
\end{claim}

\begin{example}[Infinite conformal growth exponent but $2$-parabolic]
Now
let $\{H_n : n \geq 1\}$ denote an infinite family of connected, transitive, $d$-regular graphs with
$|V(H_n)| \in [n,2n]$ and
\begin{equation}\label{eq:thediam}
   \diam(H_n) < C \log (n+1)\,,
\end{equation}
for some $C > 0$.  (The diameter here refers to the graph metric.)
For instance, one can take a family of expanding Cayley graphs.

\begin{lemma}
If $\rho_n \in V(H_n)$ is uniformly random, then for any $\omega : V(H_n) \to \R_+$:
\begin{equation}\label{eq:fast-growth}
   \max_{x \in V(H_n)} \left|B_{\omega}\left(x, 2C \log(n+1) \sqrt{\E[\omega(\rho_n)^2]}\right)\right| \geq \frac{n}{4}\,.
\end{equation}
\end{lemma}

\begin{proof}
   Consider the following family of convex sets indexed by $D > 0$:
   \[
      \cC_D \seteq \left\{ \omega : \E[\omega(\rho_n)^2] \leq 1 \textrm{ and } \frac{1}{|V(H_n)|^2} \sum_{x,y \in V(H_n)}
   \dist_{\omega}(x,y) \geq D \right\}.
\]
By convexity and transitivity of $H_n$, $\omega_0 \in \cC_D \iff \cC_D \neq \emptyset$, where
$\omega_0 \equiv 1$ is the uniform weight.
Note that $\dist_{\omega_0}$
is simply the graph metric $\dist_{H_n}$,
hence \eqref{eq:thediam} implies that $\cC_{C \log (n+1)} = \emptyset$.

Thus for any $\omega : V(H_n) \to \R_+$, there is an $x_0 \in V(H_n)$ such that
\[
   \frac{1}{|V(H_n)|} \sum_{x \in V(H_n)} \dist_{\omega}(x,x_0) < C \sqrt{\E[\omega(\rho_n)^2]}\cdot \log(n+1)\,.
\]
In particular, for $R \seteq C \sqrt{\E[\omega(\rho_n)^2]}\cdot \log(n+1)$,
it holds that
\[
   \left|B_{\omega}(x_0, 2R)\right| > \tfrac12 |V(H_n)|\,,
\]
completing the proof.
\end{proof}

Define $p(n) \seteq \frac{c'}{n^2 (\log (n+1))^2}$, where the constant $c'$ is chosen so that $p$ is a 
probability on $\N$.  Then \eqref{eq:isuni} is satisfied, hence there
is a distributional limit $(G,\rho)$ as above.  By \pref{claim:Gpara},
$G$ is almost surely $2$-parabolic.

Let $\omega$ denote a (unimodular) $L^2$-normalized conformal weight on $(G,\rho)$, and define the numbers
\[
W_n \seteq \sqrt{\E\left[\omega(\rho)^2 \mid \rho \textrm{ is in a copy of $H_n$}\right]}\,.
\]
Since $\omega$ is $L^2$-normalized, we have
\[
   \sum_{n \geq 1} q(n) W_n^2 \leq 1\,.
\]
Because $q(n) \asymp \frac{1}{n (\log n)^2}$, there
must exist an infinite set $I \subseteq \N$ such that
$n \in I \implies W_n \leq \log n$.

Note that the Mass-Transport Principle yields, for $n \geq 2$,
\[
   \E\left[\frac{1}{|V(H)|} \sum_{x \in V(H)} \omega(x)^2 \bigmid \textrm{$\rho$ is in a copy $H$ of $H_n$}\right]
   = W_n^2\,,
\]
hence Markov's inequality gives
\[
   \Pr\left[\frac{1}{|V(H)|} \sum_{x \in V(H)} \omega(x)^2 > (\log n)^2 W_n^2 \bigmid \textrm{$\rho$ is in a copy $H$ of $H_n$}\right] \leq \frac{1}{\log n}\,.
\]
Applying the Mass-Transport Principle again, a straightforward application of Borel-Cantelli
shows that almost surely there are infinitely many $n \in I$ such that $G$
contains a copy $H$ of $H_n$ with
\[
   \frac{1}{|V(H)|} \sum_{x \in V(H)} \omega(x)^2 < (\log n)^2 W_n^2 \leq (\log n)^4\,.
\]
And in this case, \eqref{eq:fast-growth} yields
\[
   \max_{v \in V(H)} \left|B_{\omega}(v, 2 C \log(n+1)^3 \right| \geq \frac{n}{4}\,,
\]
clearly ruling out any finite growth exponent.
This demonstrates that $\dimconfunder(G,\rho) = \infty$.
\end{example}

\begin{example}[$2$-dimensional conformal growth, but not $2$-parabolic]
   We will exhibit a unimodular random graph $(\widehat{T},\rho)$ with
   $\deg_{\widehat{T}}(\rho) \leq 6$ almost surely, and such that
   $\widehat{T}$ is almost surely transient (and hence {\em not} $2$-parabolic), yet
   $\dimconfover(\widehat{T},\rho) \leq 2$.

   Denote by $T_n$ the complete $4$-ary tree of height $n \geq 1$.
   Let us obtain a graph $\widetilde{T}_n$ by
   replacing every edge at distance $h$ from the leaves
   by $f(h) $ parallel paths of length $g(h)$, with
   \begin{align*}
      f(h) &\seteq 2^{h}\,, \\
      g(h) &\seteq \left\lceil 2^{h-\sqrt{h}}\right\rceil\,.
   \end{align*}

   Observe that for any $x \in V(\widetilde{T}_n)$ and $i \geq 0$, it holds that
   \begin{equation}\label{eq:subq}
      |B_{\widetilde{T}_n}(x, 2^{i-\sqrt{i}})| \leq O(1) \sum_{j=1}^i 4^{i-j} f(j) g(j) \leq O(4^i)\,,
   \end{equation}
   and moreover there is a flow from a leaf of $\widetilde{T}_n$ to the root with energy at most
   \begin{equation}\label{eq:enbnd}
      O(1) \sum_{j=1}^h \frac{g(j)}{f(j)} \leq O(1)\,.
   \end{equation}

   Thus if we let $(T,\rho)$ denote the distributional limit of $\{(\widetilde{T}_n, \rho_n)\}$
   with $\rho_n \in V(\widetilde{T}_n)$ chosen uniformly at random, then
   \eqref{eq:enbnd} implies that $T$ is almost surely transient, and $\eqref{eq:subq}$ implies
   that $\dimconfover(T,\rho) \leq 2$ (using the normalized conformal weight $\omega \equiv 1$).

   The only remaining issue is that the vertex degrees in $(T,\rho)$ are not bounded.
   Since every distributional limit of finite planar graphs
   with uniformly bounded degrees {\em is} $2$-parabolic,
   replacing the parallel paths with bounded-degree subgraphs
   will require the final step in our construction to be non-planar.

   To obtain uniformly bounded degrees,
   we replace every vertex $x \in V(T_n)$ at distance $h=0,1,2,\ldots$ from the
   leaves with a cloud $C_x$ containing $f(h) = 2^h$ vertices.
   Moreover, if $y \in V(T_n)$ is a child of $x$, we connect
   every vertex in $C_y$ to exactly two vertices of $C_x$ via internally-disjoint paths of length $g(h)$
   to obtain a graph $\widehat{T}_n$.

   Clearly one can do this in a manner so that if $x$ is an internal node of $T_n$, then
   the degree of every vertex in $C_x$ in $\widehat{T}_n$ is precisely $6$ (one path from each of its
   four children and two paths to its parent), unless $x$ is the root of $T_n$,
   in which case the vertices in $C_x$ have degree $4$.
   Now let $(\widehat{T},\rho)$ denote the distributional limit of $\{(\widehat{T}_n, \rho_n)\}$
   where $\rho_n \in V(\widehat{T}_n)$ is chosen uniformly at random.

   It is straightforward that both the growth and energy estimates \eqref{eq:subq} and \eqref{eq:enbnd}
   hold for $\widehat{T}_n$ as well, where now the flow is from a leaf to the cloud $C_r$ of the root $r \in V(\widehat{T}_n)$.
   Therefore $(\widehat{T},\rho)$ is a unimodular
   random graph with essentially bounded degrees that is almost surely transient
   (and hence {\em not} $2$-parabolic) but which satisfies
   $\dimconfover(\widehat{T},\rho)\leq 2$.

   Using the duality between $d$-parabolicity and the $\ell^{d'}$ energy of a flow to $\infty$ (where $d'=\frac{d}{d-1}$ is the dual exponent to $d$), one can similarly construct examples, for every $d \geq 2$,
   of unimodular random graphs $(G,\rho)$ such that
   is almost surely not $d$-parabolic but satisfies $\dimconfover^d(G,\rho) \leq d$.
\end{example}

\subsubsection{Gauged conformal growth and vertex extremal length}
\label{sec:gauged-extremal}

We now prove that gauged $d$-dimensional conformal growth implies
$d$-parabolicity when
the degree of the root is almost surely uniformly bounded.

\begin{proof}[Proof of \pref{thm:scaled-strong}]
   Fix $d \geq 1$ and a unimodular random graph $(G,\rho)$ with gauged $d$-dimensional conformal growth
   and such that $\deg_G(\rho)$ is essentially bounded.
   For each $R \geq 0$, let $\omega_R$ be an $L^d$-normalized conformal metric on $(G,\rho)$ that satisfies
   \begin{equation}\label{eq:onescale}
      \|B_{\omega_R}(\rho,R)\|_{L^{\infty}} \leq C R^d
   \end{equation}
   for some constant $C \geq 1$.

   From \cite[Lem. 2.6]{Lee17a}, we may assume that for each $R \geq 0$, the following additional properties hold almost surely:
   \begin{enumerate}
      \item For all $x \in V(G)$,  $\omega_R(x) \geq 1/2$.
      \item For all $\{x,y\} \in E(G)$, we have $\omega_R(x) \leq C' \omega_R(y)$,
            where $C' > 1$ is a constant depending only on $\|\deg_G(\rho)\|_{L^{\infty}}$.
   \end{enumerate}
   Moreover, these additional properties are sufficient to guarantee
   that we can compare $\dist_{\omega_R}$ balls to $\dist_G$ balls
   in the following sense (see \cite[Lem. 2.5]{Lee17a}):
   Almost surely, for every $x \in V(G)$ and $R, r \geq 0$,
   \begin{equation}\label{eq:ball-compare}
      B_G\left(x,\frac{\log \frac{r}{2\omega_R(x)}}{\log C'}\right) \subseteq B_{\omega_R}(x,r) \subseteq B_G(x,2r)\,.
   \end{equation}

   Fix $\e \in (0,1), n \geq 1$.
   Let $\{r_{j}\}$ be the sequence of numbers with $r_1=1$ and, that satisfies, for $j > 1$,
   \[
      \frac{\log \frac{\e r_j}{16 C'}}{\log C'} = 2 r_{j-1}\,.
   \]
   Denote
   \[
      \Lambda_G \seteq \left\{ x \in V(G) : \omega_{r_j}(x) \leq \frac{1}{\e} \textrm{ for } j \leq n \right\}\,.
   \]
   For $x \in V(G)$, let
   \[
      A_j(x) \seteq B_{\omega_{r_j}}(x,r_j) \setminus B_{\omega_{r_{j}}}\left(x,\frac{r_{j}}{8C'}\right)\,.
   \]
   By our choice of the sequence $\{r_j\}$ and \eqref{eq:ball-compare}, for every $x \in \Lambda_G$, we have
   \begin{equation}\label{eq:bc2}
      B_{\omega_{r_{j-1}}}(x, r_{j-1}) \subseteq B_G(x, 2 r^{j-1}) \subseteq B_{\omega_{r_j}}(x, r_j/(8C'))\,,
   \end{equation}
   hence if $x \in \Lambda_G$, then the sets $A_1(x), A_2(x), \ldots, A_n(x)$ are pairwise disjoint.

   Consider now the following conformal weight which depends on the choice of some $z \in V(G)$:
   \[
      \omega_{(z)}(x) \seteq \left(\sum_{j=1}^n r_j^{-d} \omega_{r_j}(x)^d \1_{A_j(z)}(x)\right)^{1/d}\,.
   \]
   By construction, if $z \in \Lambda_G$, then
   \begin{equation}\label{eq:vbnd1}
      \sum_{x \in V(G)} \omega_{(z)}(x)^d \leq \sum_{j=0}^n r_j^{-d} \cV_{\omega_{r_j}}(z,r_j)\,,
   \end{equation}
   where
   \[
      \cV_{\omega}(x,r) \seteq \sum_{y \in B_{\omega}(x,r)} \omega(y)^d\,,
   \]
   we used the fact established earlier that $z \in \Lambda_G$ implies that the sets $A_j(z)$ are pairwise disjoint for $j=1,2,\ldots,n$.

   Now observe that
   \begin{equation}\label{eq:observe1}
      \dist_{\omega_{(z)}}(z, x) \geq \sum_{j=1}^n \frac{\dist_{\omega_{r_j} \1_{A_j(z)}}(z,x)}{r_j}\,.
   \end{equation}
   Suppose that $x \in V(G) \setminus B_G(z, 2r_n)$ and
   consider any path $\gamma$ from $z$ to $x$ in $G$.
   Let $\gamma_j$ denote the portion of $\gamma$ which lies inside $A_{j}(z)$.
   Every vertex $u \in B_{\omega_j}(z,r_j/(8C'))$ satisfies $\omega_{r_j}(u) \leq r_j/(4C')$
   by definition of $\dist_{\omega_{r_j}}$, thus if $\{u,v\} \in E(G)$,
   then by Property (2) above, $\omega(v) \leq r_j/4$.
   
   In particular,
   \[
      \len_{\omega_{r_j} \1_{A_j(z)}}(\gamma) = \len_{\omega_{r_j}}(\gamma_j) \geq \frac{r_j}{2C'}\,.
   \]
   Using \eqref{eq:observe1}, we conclude that
   \begin{equation}\label{eq:dist-bnd}
      z \in \Lambda_G \implies
      \dist_{\omega_{(z)}}(z, V(G) \setminus B_G(z, 2 r_n)) \geq
      \sum_{j=1}^n \frac{r_j}{2C' r_j} \geq \frac{n}{2C'}\,.
   \end{equation}

   Let us now return to \eqref{eq:vbnd1}.  
   For a conformal metric $\omega : V(G) \to \R_+$ and some $R > 0$,
   define the transport
   \[
      F(G,\omega,x,y) = \omega(x)^d \1_{\{\dist_{\omega}(x,y) \leq R\}}\,.
   \]
   Then by the Mass-Transport Principle,
   \begin{align*}
      \E\left[\cV_{\omega}(\rho,R)\right] = \E\left[\sum_{x \in V(G)} F(G,\omega,x,\rho)\right] &= \E\left[\sum_{x \in V(G)} F(G,\omega,\rho,x)\right] \\
      &= \E\left[\omega(\rho)^d |B_{\omega}(\rho,R)|\right] \leq \|B_{\omega}(\rho,R)\|_{L^{\infty}} \E\left[\omega(\rho)^d\right]\,.
   \end{align*}
   We conclude from \eqref{eq:onescale} that for each $j \leq n$,
   \[
      \E\left[\cV_{\omega_{r_j}}(\rho,r_j) \mid \rho \in \Lambda_G\right] \leq \frac{C r_j^d}{\Pr[\rho \in \Lambda_G]}\,,
   \]
   hence
   \[
      \E \left[\sum_{x \in V(G)} \omega_{(\rho)}(x)^d \mid \rho \in \Lambda_G\right] \leq \frac{C n}{1-\e^d n}\,,
   \]
   where we have used Markov's inequality and a union bound to assert that
   $\Pr[\rho \in \Lambda_G] \geq 1-\e^d n$.

   Take $\e = 1/n$ and $n \geq 2$ in the preceding construction and define the event
   \[
      \cE(n) \seteq \left\{ \omega_{r_j}(\rho) \leq n \textrm{ for } j \leq n \textrm{ and }
         \|\omega_{(\rho)}\|^d_{\ell^d(V(G)} \leq  2 C n^{1.5}\right\}\,.
   \]
   By Markov's inequality and a union bound, we have
   \[
      \Pr\left(\cE(n)\right) \geq 1-\frac{2}{\sqrt{n}}\,.
   \]
   Moreover from \eqref{eq:dist-bnd},
   \[
      \cE(n) \implies \frac{\dist_{\omega_{(\rho)}}\left(\rho, V(G) \setminus B_G(\rho, 2 r_n)\right)}{\|\omega_{(\rho)}\|_{\ell_d(V(G))}} \geq
      \frac{n}{4C'C^{1/d} n^{1.5/d}} \geq \frac{n^{1/4}}{4C'\sqrt{C}}\,.
   \]

   In other words, for every $n \geq 1$, it holds that
   \[
      \Pr\left[\vel_d(\Gamma_G(\rho)) \geq \frac{n^{1/4}}{4C'\sqrt{C}}\right] \geq 1-\frac{2}{\sqrt{n}}\,.
   \]
   Sending $n \to \infty$, it follows that
   \[
      \Pr\left[\vel_d(\Gamma_G(\rho))=\infty\right]  = 1\,,
   \]
   i.e., almost surely $G$ is $d$-parabolic.
\end{proof}

\subsection{Spectral bounds for the graph Laplacian}
\label{sec:spectral}

We now prove the following generalization of \pref{thm:spectral-intro}.

\begin{theorem}\label{thm:spectral}
   For every $d,\tau,M \geq 1$, $c_1,c_2 > 0$, there is a constant $C \geq 1$ such that the following holds.
   Suppose $G=(V,E)$ is an $n$-vertex graph that is $(\tau,M)$-quasi-packed in a $(c_1,c_2,d)$-regular space $(X,\dist,\mu)$.
   Then for $k=1,2,\ldots,n-1$,
   \[
      \lambda_k(G) \leq 
      C \frac{\Delta_G(k)}{k} \left(\log \frac{n}{k}\right)^2 \left(\frac{k}{n}\right)^{2/d}\,.
   \]
\end{theorem}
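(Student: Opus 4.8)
The stated bound is the $\lambda_k$-form of a Weyl-type law; it implies \pref{thm:spectral-intro}, and hence \pref{thm:distributional-weyl}, by inverting the relation between $\lambda_k$ and the counting function $N_G$ along the curve $\lambda=\lambda_k(G)$ (where $\log(e/\lambda)\asymp\log(n/k)$), so I concentrate on bounding $\lambda_k(G)$ for $k\in\{1,\dots,n-1\}$. By the Courant--Fischer principle,
\[
   \lambda_k(G)=\min_{\dim S=k+1}\ \max_{0\neq f\in S}\ \frac{\sum_{\{u,v\}\in E}(f(u)-f(v))^2}{\sum_{u\in V}\deg_G(u)\,f(u)^2}\,,
\]
so it suffices to exhibit a $(k{+}1)$-dimensional space of test functions on which this Rayleigh quotient is $\lesssim\frac{\Delta_G(k)}{k}(\log\frac nk)^2(k/n)^{2/d}$ (here $\lesssim$ hides a constant depending only on $d,\tau,M,c_1,c_2$). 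Set $d_*:=\max(d,2)$, fix a scale $r$ of order $(n/k)^{1/d_*}$, and apply \pref{thm:finite-sphere-packing} at that scale to obtain a conformal weight $\omega\colon V\to\R_+$ with $\frac1n\sum_x\omega(x)^{d_*}=1$ and $\max_x|B_\omega(x,r)|\le Cr^{d_*}$; by the regularizations of \cite[Lem.~2.6]{Lee17a} I may further assume $\omega\ge\frac12$ pointwise and $\omega(u)\asymp\omega(v)$ for $\{u,v\}\in E$. As the model construction, take a maximal $r$-separated set $\{x_1,\dots,x_m\}$ in $(V,\disto)$; the balls $B_\omega(x_i,r)$ cover $V$, so $m\ge n/(Cr^{d_*})$ and $r$ can be pinned so that $m\ge 3k$, and the tents $f_i(y)=\bigl(1-\tfrac3r\disto(x_i,y)\bigr)_+$, supported on the pairwise disjoint sets $T_i:=B_\omega(x_i,r/3)$, are linearly independent and span a space of dimension $m\ge k+1$.

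For the numerator, $|f_i(u)-f_i(v)|\le\tfrac3r\disto(u,v)\le\tfrac3r\len_{\omega}(\{u,v\})\lesssim\tfrac1r(\omega(u)+\omega(v))$ on every edge, and the Harnack bound controls the one-neighbourhood $N(T_i)$ of $T_i$, so the Dirichlet energy of $f_i$ is $\lesssim\tfrac1{r^2}\sum_{u\in N(T_i)}\deg_G(u)\omega(u)^2$. The quantity $\Delta_G(k)$ now enters: at most $k$ vertices have degree exceeding $\Delta_G(k)/k$, so deleting the at most $k$ clusters meeting one of them leaves $\ge 2k$ ``good'' clusters on whose vertices $\deg_G\le\Delta_G(k)/k$, whence their energies are $\lesssim\frac{\Delta_G(k)}{kr^2}\sum_{u\in N(T_i)}\omega(u)^2$. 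Summing over the good clusters, using disjointness, the normalization $\sum_x\omega(x)^{d_*}=n$, and H\"older across the $m$ clusters to pass from $\ell^2$- to $\ell^{d_*}$-sums of $\omega$, the combined energy of the good test functions is $\lesssim\frac{\Delta_G(k)}{k^2r^2}\,n\,(\log\frac nk)^2$ -- the polylogarithm arising from that $\ell^2$/$\ell^{d_*}$ passage, and, if one instead uses the single all-scales metric of \pref{thm:finite-sphere-packing-intro-0}, from the $(\log R)^2$ in its growth bound. Picking the $k+1$ good functions of smallest Dirichlet energy and substituting $r^{d_*}\asymp n/k$, their Rayleigh quotients come out to $\lesssim\frac{\Delta_G(k)}{k}(k/n)^{2/d}(\log\frac nk)^2$ \emph{provided} each of them carries mass $\sum_u\deg_G(u)f_i(u)^2\gtrsim n/k$.

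The step I expect to be the real obstacle is exactly that last mass lower bound: a bare tent only guarantees mass $\ge\tfrac14\deg_G(x_i)\ge\tfrac14$, which is smaller than $n/k$ by precisely the factor one cannot afford to lose, and a cluster may degenerate to a single heavy-weight vertex. Following the mechanism of Kelner--Lee--Price--Teng as adapted in \cite[\S4]{Lee17a}, the remedy is to replace the crude net by a more careful dissection of $(V,\disto)$ into $\asymp k$ clusters that are simultaneously of $\omega$-diameter $\lesssim r$ and carry mass $\gtrsim n/k$, and to build the test functions from these; it is precisely here that the volume-growth hypothesis is needed -- not merely the existence of a conformal metric -- since it is what forces the pieces to be both $\omega$-thin and of comparable mass, while the quasi-packing structure enters through the fact that the high-degree vertices cannot concentrate, so that throwing away the $\le k$ clusters meeting them is harmless. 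The remaining points -- the overlap of the enlargements $N(T_i)$, the regime $d<2$ (where $d_*=2\ne d$ and one argues directly with exponent $d$), and the various logarithmic losses -- are routine, and the counting form \pref{thm:spectral-intro} and the distributional Weyl bound \pref{thm:distributional-weyl} then follow by the standard inversion together with the elementary estimates relating $\mu([1-\e,1])$ and $\E[p^G_{2T}(\rho,\rho)]$ recorded in the introduction.
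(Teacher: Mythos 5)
You follow the same route as the paper: obtain from \pref{thm:finite-sphere-packing} an $\ell^{d_*}$-normalized conformal metric whose $\disto$-balls of radius $r\asymp(n/k)^{1/d_*}$ contain $O(r^{d_*})$ vertices, convert it into $\gtrsim k$ disjointly supported test functions of small Rayleigh quotient, absorb the heavy vertices through $\Delta_G(k)$, and finish with Courant--Fischer. The difference is that the paper never attempts your net-and-tent construction: it passes from the $\ell^{d_*}$ to the $\ell^2$ normalization by H\"older (a lossless step -- the $(\log\frac nk)^2$ is \emph{not} produced by the $\ell^2$/$\ell^{d_*}$ passage, contrary to your parenthetical) and then quotes \pref{thm:bumps}, i.e.\ \cite[Thm.~3.13]{Lee17a}, which is exactly the packaged form of what you single out as ``the real obstacle'': a dissection into $\gtrsim n/K$ disjointly supported bumps that are simultaneously $\omega$-thin and mass-heavy, with the $(\log K)^2$ loss and the $\avgd_G(1/K)+\avgd_G(1/R^2)$ degree terms built in. So your deferral to ``the KLPT mechanism as adapted in [Lee17a, \S4]'' coincides with the paper's citation, and at that level the argument is sound; but two details of your own partial re-derivation should be flagged. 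First, the Harnack-type regularization $\omega(u)\asymp\omega(v)$ across edges from \cite[Lem.~2.6]{Lee17a} carries a constant depending on the maximum degree, which is not assumed bounded in \pref{thm:spectral} (the paper invokes that lemma only in \pref{thm:scaled-strong}, where $\deg_G(\rho)$ is essentially bounded), so your control of the enlargements $N(T_i)$ and their overlaps is not justified as stated -- this is part of what the cited bump theorem handles differently. Second, the $d<2$ regime is not routine ``with exponent $d$'': the H\"older step needs $d\ge2$, the paper's own Proposition following \pref{thm:bumps} is stated for $d\ge2$, and the construction only yields the exponent $2/d_*=1$ there, not $2/d$; so, like the paper, your argument really establishes the stated bound for $d\ge2$.
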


Consider a finite connected graph $G=(V,E)$.
Define the {\em Rayleigh quotient $\cR_G(f)$} of non-zero $f : V \to \R$ by
\[
\cR_G(f) \seteq 
\frac{\sum_{\{x,y\} \in E} |f(x)-f(y)|^2}{\sum_{x \in V} \deg_G(x) f(x)^2}\,.
\]
It is an elementary fact (see, e.g., \cite[Cor. 3.1]{Lee17a}) that to establish \pref{thm:spectral}, it suffices
to find $k$ disjointly supported functions $\f_1, \f_2, \ldots, \f_k : V \to \R$ such that
for each $i=1,2,\ldots,k$,
\[
   \cR_G(\f_i) \leq 
      C \frac{\Delta_G(k)}{k} \left(\log \frac{n}{k}\right)^2 \left(\frac{k}{n}\right)^{2/d}\,.
\]

Toward this end, we now state \cite[Thm. 3.12]{Lee17a}.
For a finite graph $G=(V,E)$, denote
$\avgd_G(\e) \seteq \frac{\Delta_G(\e |V|)}{\e |V|}$.

\begin{theorem}\label{thm:bumps}
   There is a constant $C \geq 1$ such that the following holds.
   Consider a finite graph $G=(V,E)$ with $n=|V|$.
   Suppose that $\omega : V \to \R_+$ is a conformal metric on $G$ satisfying
   \begin{enumerate}
      \item $\frac{1}{|V|} \sum_{x \in V} \omega(x)^2 \leq 1$\,,
      \item
         For some numbers $R > 0, K \geq 2$:
   \begin{equation}\label{eq:growth-require}
      \max_{x \in V} |B_{\omega}(x,R)| \leq K \leq n/2\,.
   \end{equation}
   \end{enumerate}
   Then there exist disjoint supported functions $\f_1, \f_2, \ldots, \f_k : V \to \R_+$
   with $k \geq n/16K$, and such that
   \[
      \max \left\{\cR_G(\f_1), \ldots, \cR_G(\f_k)\right\} \leq C
      \frac{(\log K)^2 \left(\vphantom{\bigoplus}\avgd_G(1/K)+\avgd_G\left(1/R^2\right)\right)}{R^2}\,.
   \]
\end{theorem}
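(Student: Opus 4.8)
The plan is to construct $\f_1,\dots,\f_k$ directly, as multi‑scale ``tent functions'' for the weighted metric $\dist_{\omega}$, following the scheme underlying the eigenvalue estimates of Korevaar and of Grigor'yan--Netrusov--Yau, adapted to graphs as in \cite{KLPT09} and \cite[\S 3]{Lee17a}. I would first replace $\omega$ by $\max(\omega,\tfrac12)$ and, by a routine regularization, also arrange that $\omega(x)\asymp\omega(y)$ whenever $\{x,y\}\in E(G)$; these modifications change $\tfrac1{|V|}\sum_x\omega(x)^2$, the quantities $|B_{\omega}(x,R)|$, and every Rayleigh quotient to be bounded, by at most constant factors. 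For a vertex $z$ and radii $0<\rho\le a$, define
\[
   \f_z(x)\ \seteq\ \Bigl(1-\tfrac1\rho\bigl(\dist_{\omega}(z,x)-a\bigr)_{+}\Bigr)_{+}\,,
\]
which equals $1$ on the \emph{core} $B_{\omega}(z,a)$, is supported in $B_{\omega}(z,a+\rho)$, and is $\tfrac1\rho$‑Lipschitz for $\dist_{\omega}$. Using $\dist_{\omega}(u,v)\le\tfrac12(\omega(u)+\omega(v))$ on edges and $\deg_G\ge1$ in the denominator,
\[
   \cR_G(\f_z)\ \le\ \frac{1}{2\rho^2}\cdot\frac{\nu\bigl(B_{\omega}(z,a+\rho)^{+}\bigr)}{\bigl|B_{\omega}(z,a)\bigr|}\,,\qquad \nu(Y)\seteq\sum_{x\in Y}\deg_G(x)\,\omega(x)^2,
\]
where $Y^{+}\seteq Y\cup\partial Y$. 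So the whole task is to find $k\ge n/(16K)$ vertices $z_i$ with radii $\rho_i\asymp a_i\le R$ whose enlarged supports $B_{\omega}(z_i,a_i+\rho_i)^{+}$ are pairwise disjoint and satisfy $\nu(B_{\omega}(z_i,a_i+\rho_i)^{+})\lesssim\tfrac{\rho_i^2}{R^2}(\log K)^2\bigl(\avgd_G(1/K)+\avgd_G(1/R^2)\bigr)\,|B_{\omega}(z_i,a_i)|$.

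\medskip
\noindent\textbf{The covering.}
To choose the centers I would take a padded random partition of $(V,\dist_{\omega})$ at scale $R$: a random partition into clusters of $\dist_{\omega}$‑diameter $\le R$ in which every vertex is ``deep in its cluster'' — its ball of radius $\rho\seteq\gamma R$ contained in the cluster — with probability $\ge\tfrac12$. One can take the padding parameter $\gamma=\Omega(1/\log K)$: the hypothesis $|B_{\omega}(x,R)|\le K$ bounds the local growth ratio of $(V,\dist_{\omega})$ by $K$, and that ratio is exactly the quantity governing $\gamma$ in the Calinescu--Karloff--Rabani / Bartal decompositions. Every cluster has $\le K$ vertices, so the partition has $\ge n/K$ clusters; fixing a realization in which $\ge n/2$ vertices are deep, I take, for each cluster $C$, the core $C^{\circ}$ to be (a subset of) its deep vertices, buffer width $\rho=\gamma R$, and $\f_C$ the corresponding tent function. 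Since $\omega$ is bounded above on the deep part and has bounded variation along edges, an edge incident to a support vertex of $\f_C$ cannot leave $C$, so the enlarged supports of distinct clusters are pairwise disjoint and $\sum_C\nu(\supp(\f_C)^{+})\le\nu(V)$, while $\sum_C|C^{\circ}|\gtrsim n$. The factor $\rho^{-2}=\gamma^{-2}R^{-2}\asymp(\log K)^2R^{-2}$ is the sole source of the $(\log K)^2$ in the conclusion.

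\medskip
\noindent\textbf{The degree--weight split, and the main obstacle.}
The crux — and the step I expect to be hardest — is that $\nu(V)=\sum_x\deg_G(x)\,\omega(x)^2$ is \emph{not} controlled by hypothesis~(1) alone: a few vertices of very large degree, and the few vertices of very large weight, can dominate it. The fix is to forbid the cores (hence their buffers, hence the supports) from meeting two small sets. First, the set $H_{\deg}$ of vertices whose degree exceeds $\avgd_G(1/(2K))$; by the defining property of $\avgd_G$ this set has fewer than $n/(2K)$ elements, and off it every degree is $\le\avgd_G(1/(2K))\lesssim\avgd_G(1/K)$, so on supports disjoint from $H_{\deg}$ one has $\nu(Y)\lesssim\avgd_G(1/K)\sum_{x\in Y}\omega(x)^2$. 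Second, a set of at most $n/R^2$ vertices of large weight — which, since $\sum_x\omega(x)^2\le n$, is all one can afford to excise — whose presence inside a support would destroy the $R^{-2}$ decay and whose ``heavy'' contributions, since these vertices have $\dist_{\omega}$‑balls that need not be small and so cannot simply be deleted, must be absorbed into the term $\avgd_G(1/R^2)$. As the enlarged supports are pairwise disjoint, $\sum_C\sum_{x\in\supp(\f_C)^{+}}\omega(x)^2\le\sum_x\omega(x)^2\le n$, hence $\sum_C\mathrm{num}(\f_C)\lesssim\rho^{-2}\bigl(\avgd_G(1/K)+\avgd_G(1/R^2)\bigr)\,n$ while $\sum_C\mathrm{denom}(\f_C)\gtrsim\sum_C|C^{\circ}|\gtrsim n$; an averaging (Markov) argument over the $\gtrsim n/K$ clusters then retains a constant fraction of them — at least $n/(16K)$ — on which $\cR_G(\f_C)\lesssim(\log K)^2\bigl(\avgd_G(1/K)+\avgd_G(1/R^2)\bigr)/R^2$, as desired. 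The delicate point to handle with care is that excising these two sets (especially $H_{\deg}$, whose $\dist_{\omega}$‑balls may be large) must still leave cores covering a constant fraction of $V$ after the $\rho=\Theta(R/\log K)$ buffering; getting this balance right is precisely what pins down the two‑term bound $\avgd_G(1/K)+\avgd_G(1/R^2)$ and is the technical heart of the argument.
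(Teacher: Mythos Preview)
The paper does not prove this statement: it is quoted verbatim as \cite[Thm.~3.13]{Lee17a} and then applied as a black box. So there is no ``paper's own proof'' to compare against here; your proposal is a sketch of how that external result is established.

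Your strategy---padded decomposition at scale $R$ with padding $\Theta(R/\log K)$, Lipschitz tent functions, then a degree/weight excision to control $\nu(Y)=\sum_{x\in Y}\deg_G(x)\omega(x)^2$---is indeed the framework of \cite[\S3]{Lee17a}, and you have correctly isolated the source of the two terms $\avgd_G(1/K)$ and $\avgd_G(1/R^2)$ as well as the $(\log K)^2$ factor. One point deserves a flag: the ``routine regularization'' arranging $\omega(x)\asymp\omega(y)$ along edges is \emph{not} free of a degree hypothesis. In this very paper (proof of \pref{thm:scaled-strong}) that step is invoked via \cite[Lem.~2.6]{Lee17a} with a constant $C'$ explicitly depending on $\|\deg_G(\rho)\|_{L^\infty}$; here no such bound is assumed. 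Without edge-regularity your claim that ``an edge incident to a support vertex of $\f_C$ cannot leave $C$'' fails, and with it the assertion that the enlarged supports $\supp(\f_C)^{+}$ are pairwise disjoint. A boundary edge $\{u,h\}$ with $u\in C$ and $h\notin C$ still contributes $\omega(h)^2$ to the numerator bound via Lipschitz, so merely excising $H_{\deg}$ from the \emph{clusters} does not remove $\nu(H_{\deg})$ from $\sum_C\mathrm{num}(\f_C)$. The actual argument in \cite{Lee17a} handles this not by edge-regularization but by a more careful accounting of cross-cluster edges together with the truncation $\omega\le R$ on the retained supports; this is exactly the ``delicate point'' you flag at the end, and it is where the $\avgd_G(1/R^2)$ term genuinely enters rather than being a bookkeeping afterthought.
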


\begin{remark}
   The statement of \cite[Thm. 3.12]{Lee17a} contains an additional parameter $\alpha$,
   and here we have used the fact that one can take $\alpha \leq O(\log K)$.
   This is a basic and well-known estimate; it follows, for instance, from \cite[Lem. 4.5]{Lee17a}
   which it itself a reference to \cite[Lem. 3.11]{LN05}.
\end{remark}

Now \pref{thm:spectral} is a consequence of the following proposition combined with \pref{thm:finite-sphere-packing}.

\begin{proposition}
   Suppose that $G=(V,E)$ is an $n$-vertex graph with $(c,R,d)$-growth for some numbers $c \geq 1, d \geq 2$ and
   all $R \geq 0$.  
   Then for $k=1,2,\ldots,n-1$,
   \[
      \lambda_k(G) \leq O(1) \frac{\Delta_G(k)}{k} \left(\log \frac{n}{k}\right)^2 \left(\frac{c k}{n}\right)^{2/d}\,.
   \]
\end{proposition}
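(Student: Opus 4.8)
The plan is to feed the conformal metric supplied by $(c,R,d)$-growth into \pref{thm:bumps} at a carefully chosen scale $R$, tuned so that the number of disjointly supported test functions it produces is $\gtrsim k$, and then to check that the resulting Rayleigh bound is exactly the claimed one. First I would dispose of the range where $k$ is a constant fraction of $n$: there $\lambda_k(G)\le 2$ trivially (the random walk operator has spectrum in $[-1,1]$), while the claimed right-hand side is $\Omega(1)$ since $\Delta_G(k)/k\ge1$ and $(ck/n)^{2/d}\gtrsim1$ (reading $\log(n/k)$ with the customary harmless $+1$). So assume $k\le n/32$ and set $K\seteq\lfloor n/(16k)\rfloor$, so that $2\le K\le n/2$ and $n/(32k)\le K\le n/(16k)$; put $R\seteq(K/c)^{1/d}>0$ and let $\omega$ be an $L^d$-normalized conformal metric witnessing $(c,R,d)$-growth at this scale, so $\tfrac1n\sum_{x\in V}\omega(x)^d=1$ and $\max_{x\in V}|B_\omega(x,R)|\le cR^d=K$.

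Next I would verify the hypotheses of \pref{thm:bumps} with this $\omega$, $R$, and $K$. Since $d\ge2$, the power-mean inequality gives $\big(\tfrac1n\sum_x\omega(x)^2\big)^{1/2}\le\big(\tfrac1n\sum_x\omega(x)^d\big)^{1/d}=1$, which is condition~(1); and $\max_x|B_\omega(x,R)|\le K\le n/2$ is condition~(2). \pref{thm:bumps} then yields $k'\ge n/(16K)\ge k$ disjointly supported functions $\f_1,\dots,\f_{k'}:V\to\R_+$ with
\[
   \max_i\cR_G(\f_i)\;\le\;C\,\frac{(\log K)^2\big(\avgd_G(1/K)+\avgd_G(1/R^2)\big)}{R^2}\,.
\]
Retaining any $k$ of the $\f_i$ and invoking the reduction recalled above (see \cite[Cor.~3.1]{Lee17a}), it remains to bound this right-hand side by $O(1)\,\tfrac{\Delta_G(k)}{k}(\log(n/k))^2(ck/n)^{2/d}$.

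This is a matter of bookkeeping using two facts: (i) $\Delta_G(ak)\le2a\,\Delta_G(k)$ for all $a\ge1$ (split a set of size $ak$ into $\lceil a\rceil\le2a$ blocks of size $\le k$), so that $\e\mapsto\avgd_G(\e)=\Delta_G(\e n)/(\e n)$ is decreasing up to a factor $2$; and (ii) $16^{2/d}\ge(k/n)^{1-2/d}$, which holds since $d\ge2$ and $k<n$. Indeed, $1/K\ge16k/n\ge k/n$ gives $\avgd_G(1/K)\le2\Delta_G(k)/k$; and $R^{-2}=(c/K)^{2/d}\ge(16k/n)^{2/d}\ge k/n$ — where $c\ge1$ is used for the first inequality (as $16Kk/n\le1\le c$) and (ii) for the second — hence $\avgd_G(1/R^2)\le2\Delta_G(k)/k$ as well. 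Finally $\log K\le\log(n/k)$, and $R^{-2}=(c/K)^{2/d}\le(32ck/n)^{2/d}\le32(ck/n)^{2/d}$ using $K\ge n/(32k)$ and $32^{2/d}\le32$. Combining these gives $\max_i\cR_G(\f_i)\le128\,C\,\tfrac{\Delta_G(k)}{k}(\log(n/k))^2(ck/n)^{2/d}$, so $\lambda_k(G)$ obeys the same bound.

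I expect the only genuinely delicate point to be the inequality $R^{-2}=(c/K)^{2/d}\ge k/n$: it says that the scale $R$ at which we invoke the growth hypothesis is small enough that $\avgd_G(1/R^2)$ still averages over at least the top $k$ vertices and therefore does not exceed $\Delta_G(k)/k$. This is precisely where both $c\ge1$ and $d\ge2$ enter, and it is why the conclusion carries $(ck/n)^{2/d}$ rather than $(k/n)^{2/d}$. Everything else — choosing $K$ to match the count of test functions to $k$, the power-mean normalization step, the near-submultiplicativity of $\Delta_G$, and absorbing the dimension-dependent constants $16^{2/d},32^{2/d}\le32$ — is routine.
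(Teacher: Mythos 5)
Your proof is correct and takes essentially the same route as the paper's: apply \pref{thm:bumps} to the metric furnished by $(c,R,d)$-growth at scale $R \asymp (n/(ck))^{1/d}$ with $K = cR^d \asymp n/k$, using the power-mean/H\"older step for the $L^2$ normalization. Your additional bookkeeping (the large-$k$ regime, the floor in $K$, and the explicit comparison of $\avgd_G(1/R^2)$ with $\Delta_G(k)/k$) merely fills in details the paper's terse proof leaves implicit.
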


\begin{proof}
   For each $R \geq 0$, let $\omega_R : V \to \R_+$ be a conformal metric on $G$ satisfying
   \[
      \frac{1}{|V|} \sum_{x \in V} \omega_R(x)^d = 1\,,
   \]
   and
   \[
      \max_{x \in V} |B_{\omega}(x,R)| \leq c R^d\,.
   \]
   Note that from H\"older's inequality,
   \[
      \frac{1}{|V|} \sum_{x \in V} \omega_R(x)^2 \leq \left(\frac{1}{|V|} \sum_{x \in V} \omega_R(x)^d\right)^{2/d} = 1\,.
   \]
   So we can apply \pref{thm:bumps} with $\omega_R$ and $K=cR^d$ to obtain, for $k \leq n/(16 cR^d)$,
   \[
      \lambda_k(G) \leq O(1) \frac{(d \log R)^2 \avgd_G(\frac{1}{cR^d})}{R^2}\,.
   \]
   Setting $R \seteq (n/16 ck)^{1/d}$ yields
   \[
      \lambda_k(G) \leq O(1) \left(\frac{c k}{n}\right)^{2/d} \left(\log \frac{n}{k}\right)^2 \frac{\Delta_G(k)}{k}\,.
   \]
   completing the proof.
\end{proof}

\subsection*{Acknowledgements}

I am grateful to Omer Angel, Itai Benjamini, and Asaf Nachmias
for enlightening discussions about distributional limits of graphs,
and to Pierre Pansu for many insightful conversations during a
semester on ``Metric geometry, algorithms, and groups''
at the IHP in 2011.
Thanks are also due to Austin Stromme and the
anonymous referees for a careful reading of earlier drafts.

\bibliographystyle{alpha}
\bibliography{diffusive}

\end{document}